\newcommand{\kom}[1]{}
\renewcommand{\kom}[1]{{\bf [#1]}}
 \def\1{\raisebox{2pt}{\rm{$\chi$}}}
\newtheorem{theorem}{Theorem}[section]
\newtheorem{lemma}[theorem]{Lemma}
\newtheorem{definition}[theorem]{Definition}
\newtheorem{remark}[theorem]{Remark}
\numberwithin{equation}{section}
\newcommand{\R}{{\mathbb R}}
\newcommand{\N}{{\mathbb N}}
\newcommand{\C}{{\mathcal C}}
 \newcommand{\eps}{{\varepsilon}}
 \def\1{\raisebox{2pt}{\rm{$\chi$}}}
\newcommand{\abs}[1]{\left|#1\right|}
\newcommand{\norm}[1]{\left|\left|#1\right|\right|}
\newcommand{\osc}{\operatorname{osc}}
\def\vint_#1{\mathchoice%
          {\mathop{\kern 0.2em\vrule width 0.6em height 0.69678ex depth -0.58065ex
                  \kern -0.8em \intop}\nolimits_{\kern -0.4em#1}}%
          {\mathop{\kern 0.1em\vrule width 0.5em height 0.69678ex depth -0.60387ex
                  \kern -0.6em \intop}\nolimits_{#1}}%
          {\mathop{\kern 0.1em\vrule width 0.5em height 0.69678ex
              depth -0.60387ex
                  \kern -0.6em \intop}\nolimits_{#1}}%
          {\mathop{\kern 0.1em\vrule width 0.5em height 0.69678ex depth -0.60387ex
                  \kern -0.6em \intop}\nolimits_{#1}}}
\def\vintslides_#1{\mathchoice%
          {\mathop{\kern 0.1em\vrule width 0.5em height 0.697ex depth -0.581ex
                  \kern -0.6em \intop}\nolimits_{\kern -0.4em#1}}%
          {\mathop{\kern 0.1em\vrule width 0.3em height 0.697ex depth -0.604ex
                  \kern -0.4em \intop}\nolimits_{#1}}%
          {\mathop{\kern 0.1em\vrule width 0.3em height 0.697ex depth -0.604ex
                  \kern -0.4em \intop}\nolimits_{#1}}%
          {\mathop{\kern 0.1em\vrule width 0.3em height 0.697ex depth -0.604ex
                  \kern -0.4em \intop}\nolimits_{#1}}}
\newcommand{\aveint}[2]{\mathchoice%
          {\mathop{\kern 0.2em\vrule width 0.6em height 0.69678ex depth -0.58065ex
                  \kern -0.8em \intop}\nolimits_{\kern -0.45em#1}^{#2}}%
          {\mathop{\kern 0.1em\vrule width 0.5em height 0.69678ex depth -0.60387ex
                  \kern -0.6em \intop}\nolimits_{#1}^{#2}}%
          {\mathop{\kern 0.1em\vrule width 0.5em height 0.69678ex depth -0.60387ex
                  \kern -0.6em \intop}\nolimits_{#1}^{#2}}%
          {\mathop{\kern 0.1em\vrule width 0.5em height 0.69678ex depth -0.60387ex
                  \kern -0.6em \intop}\nolimits_{#1}^{#2}}}
\newcommand{\ol}{\overline}
\newcommand{\dist}{\operatorname{dist}}
\newcommand{\vp}{\varphi}
\newcommand{\tr}{\operatorname{tr}}
\newcommand{\A}{\mathcal A}
\begin{document}
\author[A. Attouchi]{Amal Attouchi}
\address[A. Attouchi]{Department of Mathematics and Statistics, University of Jyväskylä, PO Box 35, 
FI-40014 Jyväskylä, Finland }
\email{amal.a.attouchi@jyu.fi}
\keywords{degenerate parabolic equations, regularity of the gradient, viscosity solutions}
\subjclass[2010]{35B65, 35K65,	35D40 ,	35K92, 35K67}
\title{Local regularity for  quasi-linear parabolic  equations in non-divergence form}
\date{\today}
\begin{abstract}
We consider viscosity solutions to non-homogeneous degenerate and  singular parabolic
equations of the $p$-Laplacian type and in non-divergence form.  We provide local Hölder and Lipschitz estimates for the solutions.   In the degenerate case, we prove the Hölder regularity of the gradient. Our study is based on a combination of the method of alternatives and the improvement of flatness estimates.
\end{abstract}
\maketitle
\tableofcontents
\section{Introduction}
\label{sec:intro}

We are interested in the  regularity of  viscosity solutions of the following degenerate or singular parabolic equation in non-divergence form:
\begin{equation}\label{maineq}
	\partial_t u-|Du|^\gamma\left[\Delta u+(p-2) \left\langle D^2u\frac{Du}{\abs{Du}}, \frac{Du}{\abs{Du}}\right\rangle\right]=f\quad \text{in}\quad Q_1,
\end{equation}
where $-1<\gamma<\infty$, $1<p<\infty$ and $f$ is a continuous and bounded function.
Existence and uniqueness of solutions to \eqref{maineq}  were proved in \cite{dem11}, where   more general singular  or degenerate parabolic equations were considered (see also \cite{bata, OS} and the references therein). In \cite{dem11}, Demengel established global Hölder regularity results for the solutions of the Cauchy-Dirichlet problem associated to \eqref{maineq}, under
the assumptions that $f$
is continuous and bounded in space and Hölder in time, and that the boundary data is Hölderian in space and Lipschitz in time.\\

In this work,  we investigate the higher regularity of the solution $u$ to \eqref{maineq}.
We focus on interior regularity for the gradient, away from boundaries. Let us mention two special cases. The case $\gamma=0$ corresponds to the normalized $p$-Laplacian 
$$\Delta_p^Nu:=\Delta u+(p-2) \left\langle D^2u\frac{Du}{\abs{Du}}, \frac{Du}{\abs{Du}}\right\rangle,$$ 
 and the regularity of the gradient was studied in \cite{AP17,jinsl15} using viscosity theory methods. The case $\gamma=p-2$  corresponds to the usual parabolic $p$-Laplace equations, and it was shown in \cite{julm} that bounded weak solutions and viscosity solutions are equivalent. From this equivalence, we get the Hölder regularity of the gradient for bounded $f$ using variational methods \cite{diben, dibfrie,kuusiM12,W86}. Let us also mention that recently Parviainen and  V\'azquez \cite{PV18} established an equivalence between the radial solutions of \eqref{maineq} and the radial  solutions of the standard parabolic $\gamma+2$-Laplace equation posed in a {\em fictitious} dimension. Notice that the regularity theory for \eqref{maineq} does not fall into the classical framework of fully nonlinear uniformly parabolic equations studied in \cite{wang1, wang2} due to the lack of uniform ellipticity and the presence of singularities.\\
 
In this paper, we provide local Lipschitz estimates for solutions to \eqref{maineq} in the whole range $-1<\gamma<\infty$. For  $\gamma>0$, we prove the Hölder regularity  of the gradient. 
Recently, for $\gamma\neq 0$, the homogeneous case $f=0$ was treated by Imbert, Jin and Silvestre \cite{IJS17}.  The case where $f$ depends only on $t$ can be handled using the results of  \cite{IJS17}, since $\tilde u(x,t): =u(x, t)-\int_0^t f(s)\,ds$ solves the homogeneous equation. If we assume more regularity on $f$,  let us  say $f\in C^{1,0}_{x,t}(Q_1)$, then one could adapt the argument of \cite{IJS17}  by regularizing the equation and differentiating it, and  then prove the Hölder continuity of the gradient of the solutions of \eqref{maineq} with a norm which will then depend on $\norm{Df}_{L^\infty(Q_1)}$.  Our study relies on a nonlinear method based on  compactness arguments
where we avoid differentiating the equation and assume only  the continuity of  $f$. There are different characterizations of pointwise $C^{1+\alpha, \frac{1+\alpha}{2}}$ functions, and we will use the one relying on the rate of approximations by planes.
 The study is based on estimates which prove that the solution gets
flatter and flatter, when zooming into the smaller scales. There are three  key points: an improvement of flatness estimate,  the method of alternatives and the intrinsic scaling technique. 
In  the degenerate case  $\gamma>0$,   in order to prove the Hölder regularity of the gradient, one has to choose a suitable scaling that takes into account the structure of the equation. 
Indeed, when the equation degenerates, the solutions locally  generate their own scaling (''intrinsic scaling'') according to the values of the diffusion coefficients.   The main idea behind the  intrinsic scaling technique  is  to study the equation  not on all parabolic cylinders, but rather on
those whose ratio between space and time lengths depend on the size
of the solution itself on the same cylinder, according to the regularity considered \cite{diben, urbano}.  Specifically,    in order to prove Hölder regularity  of the gradient, we   consider the so called
intrinsic cylinders defined by
$$Q_r^\lambda(x_0, t_0):=B(x_0, r)\times (t_0-\lambda^{-\gamma } r^2, t_0],$$
 where the parameter  $\lambda>0$  behaves  like $\underset{Q_r^\lambda}{\sup} |Du|\approx  \lambda$  (see Sections \ref{sect4} and  \ref{sect5}). \\

Our strategy is to combine an improvement of flatness method with the method of alternatives (the Degenerate  Alternative and the Smooth Alternative). This procedure defines  an iteration that stops in the case where we reach a cylinder where the Smooth Alternative  holds. More precisely, using an iteration process and compactness arguments, our aim is to prove  that 
there exist $\rho=\rho(p,n,\gamma)>0$ and $\delta=\delta(p,n, \gamma) \in (0,1)$ with $\rho<(1-\delta)^{\gamma+1}$ such that  one of the two following alternative holds:\\

\begin{itemize}
\item{\bf Degenerate Alternative:} For every $k\in \N$ there exists a vector $l_k$ {\em with} $|l_k|\leq C(1-\delta)^k$ such that 
$$\underset{(x,t)\in Q_{r_k}^{\lambda_k}}{\osc}\, (u(x,t)-l_k\cdot x)\leq r_k\lambda_k,$$
where $r_k:=\rho^k$, $\lambda_k:=(1-\delta)^k$ and $Q_{r_k}^{\lambda_k}:=B_{r_k}(0)\times (-r_k^2\lambda_k^{-\gamma}, 0]$. That is, we have an improvement of flatness at all scales.

\item {\bf Smooth Alternative:} The previous iteration stops at some step $k_0 $, that is,  $|l_{k_0}|\geq C(1-\delta)^{k_0}$, and we can show that the gradient of $u$ stays away from 0 in some cylinder and then use the known results for uniformly parabolic equations with smooth coefficients \cite{LU68, Lieb96}.
\end{itemize}

Notice that the intrinsic scaling plays a role in the choice of the cylinders $Q_{r_k}^{\lambda_k}$ in order to proceed with the iteration, and that if $|l_k|\leq C(1-\delta)^k$ for all $k$, then $|Du(0,0)|=0$. Let us explain how these alternatives appear. The existence of the vector $l_{k+1}$ in the iteration process can be reduced to the proof of an improvement of flatness (see Section \ref{sect4}) for the function

$$w_k(x,t)=\frac{u(r_k x, r_k^{2} \lambda_k^{-\gamma} t)-l_k\cdot r_k x}{r_k\lambda_k}.$$
The function $w_k$ solves 
\begin{equation*}\label{introdev}\partial_t w_k-\left|Dw_k+\frac{l_k}{\lambda_k}\right|^\gamma\left[\Delta w_k+(p-2) \left\langle D^2w_k\frac{Dw_k+l_k/\lambda_k}{\abs{Dw_k+l_k/\lambda_k}}, \frac{Dw_k+l_k/\lambda_k}{\abs{Dw_k+l_k/\lambda_k}}\right\rangle\right]=\bar f\quad\text{in}\,\,Q_1,
\end{equation*}
where $\bar f(x,t):=r_k\lambda_k^{-(\gamma+1)}f(r_kx, r_k^2\lambda_k^{-\gamma}t)$.
 This leads us to study the equation satisfied by the deviations of $u$ from planes 
 $w(x,t)=u(x,t)-q\cdot x,$

	\begin{equation}\label{devia}
	\partial_t w-|Dw+q|^\gamma\left[\Delta w+(p-2) \left\langle D^2w\frac{Dw+q}{\abs{Dw+q}}, \frac{Dw+q}{\abs{Dw+q}}\right\rangle\right]=\bar f\quad\text{in}\quad Q_1.
	\end{equation}
	We see that $w_k$ satisfies \eqref{devia} with $q=l_k/\lambda_k$.
The proof of the improvement of flatness is based on compactness estimates  and a contradiction argument. Unlike the case of the normalized $p$-Laplacian, the ellipticity coefficients of  the equation \eqref{devia} depend on $q$. To tackle this problem, we have to introduce the two alternatives: either we have a uniform bound on $|q|$ and we can run again our iteration, or $|q|$ is larger than some fixed constant. In this later case, using {\bf Lipschitz estimates} in the space  variable which are {\bf independent of} $q$ (see Lemma \ref{Liphom1}), we can provide a strictly positive lower bound  for the gradient  of $u$ and finish the proof by using known results for uniformly parabolic equations with smooth coefficients. Our main result is the following.

 \begin{theorem}\label{mainth}
 Let $0\leq\gamma<\infty$ and $1<p<\infty$. Assume that $f$ is a continuous and bounded  function, and let $u$ be a bounded viscosity solution of \eqref{maineq}. Then $u$ has a locally Hölder continuous gradient, and
  there exist a constant $\alpha=\alpha(p,n, \gamma)$ with $\alpha\in (0,\frac{1}{1+\gamma})$ and a constant $C=C(p,n, \gamma)>0$ such that 
 \begin{equation}
 |D u(x,t)-D u(y,s)|\leq C\left(1+\norm{u}_{L^\infty(Q_1)}+\norm{f}_{L^\infty(Q_1)}\right)(|x-y|^{\alpha}+|t-s|^{\frac\alpha2})
 \end{equation}
 and 
 \begin{equation}
 |u(x,t)-u(x,s)|\leq C \left(1+\norm{u}_{L^\infty(Q_1)}+\norm{f}_{L^\infty(Q_1)}\right)|s-t|^{\frac{1+\alpha}{2}}.
 \end{equation}
 \end{theorem}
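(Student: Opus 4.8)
The plan is to establish a pointwise $C^{1+\alpha,\frac{1+\alpha}{2}}$ estimate at the vertex of $Q_1$ — any interior point, after a translation — and then to recover the stated local estimate by a covering/Campanato-type patching argument. First I would normalize: after replacing $u$ by a small multiple and rescaling parabolically according to the $\gamma$-homogeneity of \eqref{maineq}, I may assume $\osc_{B_1\times(-1,0]}u\le 1$ and $\norm{f}_{L^\infty(Q_1)}\le\eps_0$, where $\eps_0=\eps_0(p,n,\gamma)$ is the smallness threshold of the improvement-of-flatness estimate of Section~\ref{sect4}. I also fix the constants $\rho,\delta\in(0,1)$ with $\rho<(1-\delta)^{\gamma+1}$ furnished there and put $r_k=\rho^k$, $\lambda_k=(1-\delta)^k$, $Q_{r_k}^{\lambda_k}=B_{r_k}\times(-r_k^2\lambda_k^{-\gamma},0]$. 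The inequality $\rho<(1-\delta)^{\gamma+1}$ is used twice: it makes the intrinsic cylinders nested, $Q_{r_{k+1}}^{\lambda_{k+1}}\subset Q_{r_k}^{\lambda_k}$, and it forces the rescaled sources $\bar f_k(x,t)=r_k\lambda_k^{-(\gamma+1)}f(r_kx,r_k^2\lambda_k^{-\gamma}t)$ to satisfy $\norm{\bar f_k}_{L^\infty(Q_1)}\le[\rho(1-\delta)^{-(\gamma+1)}]^k\eps_0\le\eps_0$ at every stage, so the hypotheses of the flatness estimate survive the iteration.

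Next I would run the iteration. Starting from $l_0=0$, I keep the invariant $\osc_{Q_{r_k}^{\lambda_k}}(u-l_k\cdot x)\le r_k\lambda_k$, which is the same as $\osc_{Q_1}w_k\le 1$ for the rescaled deviation $w_k(x,t)=(r_k\lambda_k)^{-1}\bigl(u(r_kx,r_k^2\lambda_k^{-\gamma}t)-l_k\cdot r_kx\bigr)$; this $w_k$ solves \eqref{devia} in $Q_1$ with $q=l_k/\lambda_k$ and source $\bar f_k$. If $\abs{l_k}\le C_0\lambda_k$ — equivalently $\abs q\le C_0$, the flatness constant — I apply the improvement-of-flatness estimate to $w_k$ to produce a vector $\xi_k$ with $\abs{\xi_k}$ controlled and $\osc_{Q_\rho^{1-\delta}}(w_k-\xi_k\cdot x)\le\rho(1-\delta)$; setting $l_{k+1}=l_k+\lambda_k\xi_k$ and undoing the rescaling (the image of $Q_\rho^{1-\delta}$ under $(x,t)\mapsto(r_kx,r_k^2\lambda_k^{-\gamma}t)$ is exactly $Q_{r_{k+1}}^{\lambda_{k+1}}$) reproduces the invariant at stage $k+1$. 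This gives the dichotomy announced in the introduction: either $\abs{l_k}\le C_0\lambda_k$ for every $k$ (the Degenerate Alternative) or there is a first index $k_0$ with $\abs{l_{k_0}}>C_0\lambda_{k_0}$ (the Smooth Alternative).

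In the Degenerate Alternative I would apply the $q$-independent Lipschitz estimate of Lemma~\ref{Liphom1} to $w_k$ and combine it with $\abs{l_k}\le C_0\lambda_k\to 0$ to get $\abs{Du}\lesssim\lambda_k$ on $Q_{r_k/2}^{\lambda_k}$, whence $Du(0,0)=0$; choosing $\alpha=\ln(1-\delta)/\ln\rho$, so that $\rho^\alpha=1-\delta$ and $\alpha<\tfrac1{1+\gamma}$ precisely because $\rho<(1-\delta)^{\gamma+1}$, a comparison of the radii $r_k$, heights $r_k^2\lambda_k^{-\gamma}$ and amplitudes $\lambda_k$ turns this into $\abs{Du(y,s)-Du(0,0)}\lesssim\abs y^\alpha+\abs s^{\alpha/2}$, and since $l_k\cdot x$ is constant in $t$ the oscillation bounds also give $\abs{u(0,t)-u(0,s)}\lesssim\abs{t-s}^{\frac{1+\alpha}{2}}$. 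In the Smooth Alternative, the fact that we did not stop at $k_0-1$ yields $\abs{l_{k_0}}\lesssim\lambda_{k_0-1}$, hence $\abs q=\abs{l_{k_0}}/\lambda_{k_0}\le C_1$ for a fixed $C_1$; Lemma~\ref{Liphom1} then provides $\abs{Dw_{k_0}}\le L$ on $Q_{1/2}$ with $L=L(p,n,\gamma)$ independent of $q$, so — having chosen $C_0>L+1$ at the outset — $1\le\abs{Dw_{k_0}+q}\le C_1+L$ there, and since $\lambda_{k_0}(Dw_{k_0}+q)$ is the gradient of $u$ at the rescaled argument, equation \eqref{maineq} is uniformly parabolic with smooth coefficients on a cylinder whose closure contains $(0,0)$. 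The classical interior $C^{1,\alpha}$ estimates for such equations \cite{LU68,Lieb96}, applied to $w_{k_0}$ and scaled back — the conversion factors $\lambda_{k_0}/r_{k_0}^\alpha=1$ and $\lambda_{k_0}^{\gamma\alpha/2}\le 1$ being bounded thanks to $\rho^\alpha=1-\delta$ — then deliver the pointwise estimate at $(0,0)$ with a constant independent of $k_0$. In either alternative the pointwise $C^{1+\alpha,\frac{1+\alpha}{2}}$ bound holds, and patching over $Q_{1/2}$ finishes the proof.

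I expect the main difficulty to lie in the improvement-of-flatness estimate of Section~\ref{sect4}, precisely because the ellipticity of \eqref{devia} depends on $q$: its proof by compactness and contradiction requires the $q$-uniform spatial Lipschitz bound of Lemma~\ref{Liphom1} together with equicontinuity in time, and the limiting equation must be treated in two regimes — the homogeneous degenerate (or singular) equation when $q_j\to 0$, where one invokes the known $C^{1,\alpha}$ theory (\cite{IJS17}, or \cite{AP17,jinsl15} for $\gamma=0$), and a uniformly parabolic equation with smooth coefficients when $q_j\to q_\infty\neq 0$. A secondary but delicate point, carried out above, is the bookkeeping of the intrinsic scaling: arranging for the single condition $\rho<(1-\delta)^{\gamma+1}$ to simultaneously keep the rescaled sources small, keep the intrinsic cylinders nested, keep the conversion constants in both alternatives bounded, and pin the final exponent to $\alpha=\ln(1-\delta)/\ln\rho\in(0,\tfrac1{1+\gamma})$.
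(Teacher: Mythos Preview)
Your scheme matches the paper's exactly: normalize, iterate an improvement of flatness on intrinsic cylinders, split into the two alternatives, and in the Smooth Alternative use Lemma~\ref{Liphom1} to pin $|Dw_{k_0}+q|$ away from zero and invoke the classical theory. Two refinements are worth noting. First, you anticipate two limiting regimes in the compactness argument behind the flatness lemma; the paper avoids this entirely by setting $\bar v:=w_k+q\cdot x$ (with $q=l_k/\lambda_k$, bounded by the iteration hypothesis), which solves the \emph{original} equation~\eqref{maineq} with $\osc_{Q_1}\bar v\le 1+2C_2=:A_1$, so the only limiting equation is the homogeneous one handled by Theorem~\ref{imbert} --- your ``uniformly parabolic'' description of the $q_\infty\neq0$ regime is in fact not quite right (nothing prevents $Dw+q_\infty$ from vanishing), though the same substitution $w+q_\infty\cdot x$ would rescue it. Second, the exponent $\alpha=\ln(1-\delta)/\ln\rho$ you fix in the Degenerate Alternative must also be capped by the $\bar\alpha$ delivered by Theorem~\ref{ladyhope} in the Smooth Alternative; the paper takes $\alpha=\min\bigl(\bar\alpha,\ln(1-\delta)/\ln\rho\bigr)$, after which your conversion factors become $\le1$ rather than $=1$, which is all the scaling-back step needs.
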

The method of improvement of flatness  was already used in the elliptic case \cite{ARP,BD,IS} and in the uniformly parabolic case \cite{AP17}. In these works, one ends up working with equations with ellipticity constants not depending on the slope, making the improvement of flatness  working for all $k\in \N$. The method of alternatives is classical when studying the regularity for $p$-Laplacian type equations \cite{diben,dibfrie,kuusiM12}. In the singular case $-1<\gamma<0$, we weren't able to provide uniform (with respect to $q$) Lipschitz estimates for solutions to \eqref{devia}. The higher regularity of the gradient is still an open problem  when $\gamma<0$.\\

The paper is organized as follows. In Section \ref{sect2} we fix the notations, gather some known regularity results that we will use later on,  and reduce the problem by re-scaling. Section \ref{sect3} is devoted to the study of the equation  \eqref{devia} and provides the needed compactness estimates. In Section \ref{sect4},  we provide the proof of the “improvement of flatness” property.   
In Section \ref{sect5} we prove Theorem \ref{mainth} proceeding by iteration and considering the two possible alternatives.  Section \ref{sect6}  contains the proof of the Lipschitz regularity for solutions to \eqref{maineq} for $-1<\gamma<\infty$.  In Section \ref{sect7}  we prove the uniform Lipschitz estimates for solutions to \eqref{devia} for $0\leq \gamma<\infty$.\\

\noindent{\bf Acknowledgment.} The author  is supported by the Academy of Finland, project
number 307870.
 \section{Preliminaries and notations}\label{sect2}
 In this section we fix the notation that we are going to use throughout the paper, recall the definitions of parabolic Hölder spaces and precise the definition of viscosity solutions that we adopt.\\
 
 \noindent{\bf Notations.} 
 For $x_0\in\R^n$, $t_0\in \R$ and $r>0$ we denote the Euclidean ball
 $$B_r(x_0)=B(x_0, r):= \left\{ x\in \R^n\quad|\quad |x-x_0|<r\right\},$$
 and the parabolic cylinder
 $$Q_r(x_0, t_0):=B_r(x_0)\times (t_0-r^2, t_0].$$
 We also define the re-scaled (or intrinsic) parabolic cylinders
 $$Q_r^{\lambda}(x_0, t_0):= B_r(x_0)\times (t_0-r^2\lambda^{-\gamma}, t_0],$$
 which are suitably scaled to reflect the degeneracy   of the equation \eqref{maineq}.
 When $x_0=0$ and $t_0=0$ we omit indicating the centers in the above notations.  For a set $E\subset \R^{n+1}$, $\partial_p E$ is its parabolic boundary. For the parabolic functional classes, we use the following notations.
 For $\alpha\in (0,1)$,  we use the notation
	\[
	[u]_{C^{\alpha, \alpha/2}(Q_r)}:=\sup_{\substack{(x,t),(y,s)\in Q_r,\\
			(x,t)\neq (y,s)}} \dfrac{|u(y,s)-u(x,t)|}{|x-y|^\alpha+|t-s|^{\frac\alpha 2}},
	\]
	\[
	\norm{u}_{C^{\alpha, \alpha/2}(Q_r)}:=\norm{u}_{L^\infty(Q_r)}+[u]_{C^{\alpha, \alpha/2}(Q_r)}
	\]
	for H\"older continuous functions.
The space $C^{1+\alpha, (1+\alpha)/2}(Q_r)$ is defined  as the space of all functions with a finite norm 
\[
	\norm{u}_{C^{1+\alpha, (1+\alpha)/2}(Q_r)}:=\norm{u}_{L^\infty(Q_r)}+\norm{Du}_{L^\infty(Q_r)}+[u]_{C^{1+\alpha, (1+\alpha)/2}(Q_r)},
	\]
	where
		\begin{align*}
[u]_{C^{1+\alpha,(1+\alpha)/2}(Q_r)}:&=\sup_{\substack{(x,t), (y,s)\in Q_r\\(x,t)\neq (y,s)}}\dfrac{|D u(x,t)-D u(y, s)|}{|x-y|^\alpha+|t-s|^{\frac\alpha 2}}\\
	&\quad+\sup_{\substack{(x,t),(x,s)\in Q_r,\\ t\neq s}}\,\frac{|u(x,t)-u(x,s)|}{|t-s|^{\frac{1+\alpha}{2}}}.
\end{align*}
 In this paper $C$ will denote generic constants which may change
from line to line. If a
more careful control over a constant is needed, we denote its dependence on certain parameters writing
$C$(parameters).\\

\noindent\textbf{Definition of solutions}.  We adopt the same notion of viscosity solutions to \eqref{maineq} as  the one used in \cite{dem11, IJS17}.
For the existence and uniqueness of the solutions and the comparison principles for equations of type   \eqref{maineq}, we refer the reader to \cite{chen, dem11, giga, IJS17, OS}.  For the very singular case $\gamma<0$, the definition in the sense of Ohnuma-Sato \cite{OS}  requires the introductio of   a set of admissible test functions when the gradient  of $u$ is 0 whereas no special restrictions are needed in the degenerate case $\gamma>0$. However, for $\gamma\neq 0$ the notion of solutions in the sense of Ohnuma-Sato and the one proposed by Demengel are equivalent (see Appenidx of \cite{dem11}). One can also show that the notion of solutions in \cite{dem11} is equivalent to the one proposed by \cite{julm} (this was done in  \cite{ARP} for the elliptic case and it can be easily generalized to the parabolic case). In the proofs of the Hölder estimates in time we will rely on those comparison principles \cite[Theorem 1]{dem11}. We will also use the stability results for \eqref{maineq} (see \cite[Proposition 3]{dem11} and \cite[Theorem 6.1, 6.2]{OS}). Let us recall the definition of viscosity solutions \cite{dem11}.
\begin{definition} A locally bounded and  upper semi-continuous function $u$ in $Q_1$ is called a viscosity
subsolution  of \eqref{maineq}  if, for any  point $(x_0, t_0)\in Q_1$, one of the following conditions holds
\begin{enumerate}[i)]
\item 
Either  for every  $\varphi\in C^2(Q_1)$, such that  $u -\varphi$  has a local maximum at ($x_0, t_0)$ and $D\varphi(x_0, t_0)\neq 0$ it holds
$$\partial_t\varphi(x_0, t_0)-|D\varphi(x_0, t_o)|^\gamma \Delta_p^N\varphi(x_0,t_0)\leq f(x_0,t_0). $$
\item Or if there exists $\delta_1$ and $\varphi\in C^2(]t_0-\delta_1, t_0+\delta_1[)$, such that
\begin{equation}
\begin{cases}
\varphi(t_0)=0\\
u(x_0, t_0)\geq u(x_0, t)-\varphi(t) \quad\text{for all}\quad t\in ]t_0-\delta_1, t_0+\delta_1[\\
\underset{]t_0-\delta_1, t_0+\delta_1[}{\sup}\, (u(x,t)-\varphi(t))\quad\text{is constant in a neighborhood of}\quad x_0,
\end{cases}
\end{equation}
then $$\varphi'(t_0)\leq f(x_0, t_0).$$
\end{enumerate}

A locally bounded and  lower semi-continuous function $u$ in $Q_1$ is called a viscosity
supersolution  of \eqref{maineq}  if, for any  point $(x_0, t_0)\in Q_1$ one of the following conditions holds
\begin{enumerate}[i)]
\item 
Either  for every  $\varphi\in C^2(Q_1)$, such that  $u -\varphi$  has a local minimum at ($x_0, t_0)$ and $D\varphi(x_0, t_0)\neq 0$ it holds
$$\partial_t\varphi(x_0, t_0)-|D\varphi(x_0, t_o)|^\gamma \Delta_p^N\varphi(x_0,t_0)\geq f(x_0,t_0). $$
\item Or if there exists $\delta_1$ and $\varphi\in C^2(]t_0-\delta_1, t_0+\delta_1[)$, such that
\begin{equation}
\begin{cases}
\varphi(t_0)=0\\
u(x_0, t_0)\leq u(x_0, t)-\varphi(t) \quad\text{for all}\quad t\in ]t_0-\delta_1, t_0+\delta_1[\\
\underset{]t_0-\delta_1, t_0+\delta_1[}{\inf}\, (u(x,t)-\varphi(t))\quad\text{is constant in a neighborhood of}\quad x_0,
\end{cases}
\end{equation}
then $$\varphi'(t_0)\geq f(x_0, t_0).$$
\end{enumerate}

A  continuous function $u$ is called a viscosity solution of \eqref{maineq}, if it is both a viscosity subsolution
and a viscosity supersolution.
\end{definition}

\noindent\textbf{Normalization and scaling}: Without a loss of generality, we may assume  in Theorem \ref{mainth}
that $\norm{u}_{L^\infty(Q_1)}\leq 1/2$  and that 
$\norm{f}_{L^\infty(Q_1)}\leq \eps_0$, where  $\eps_0=\eps_0(p,n, \gamma)>0$ will be chosen in Section \ref{sect4}.  Indeed, we can use a nonlinear method  to realize this (notice that contrary to the elliptic case, multiplying solutions by a
constant does not yield a solution to a similar equation). For  $\gamma\geq 0$, set $$\theta:= \left( 2\norm{u}_{L^\infty(Q_1)}+\left(\frac{\norm{f}_{L^\infty(Q_1)}}{\eps_0}\right)^{\frac{1}{\gamma+1}}+1\right)^{-1}.$$ 
We may consider  in $Q_1$ the function 
$$u_{\theta}(x,t):= \theta u( x, \theta^{\gamma} t).$$
The function $u_\theta$ satisfies $\norm{u_\theta}_{L^\infty(Q_1)}\leq 1/2$ and  solves in $Q_1$ 
$$\partial_tu_{\theta}=|Du_\theta|^\gamma\Delta_p^N u_\theta+ f_\theta$$
with  $$f_{\theta}(x,t):=\theta^{\gamma+1}f(x, \theta^{\gamma}t), \qquad \qquad\norm{f_\theta}_ {L^\infty(Q_1)}\leq \eps_0.$$

We will use the standard alternative characterization  (see \cite[Lemma 12.12]{Lieb96}) of functions with Hölder continuous gradient. This one  is more  suitable when we prove regularity results for nonlinear equations using compactness methods. Indeed, we will show that $u$ can be approximated by planes with a good control on the rate of approximations. By removing a constant, we may assume that $u(0,0)=0$. \\

\noindent{\bf Known regularity results.} Here we gather some known regularity results that we will need later on.  We start with the following result of \cite{IJS17}.
\begin{theorem}\label{imbert}
	Let $-1<\gamma<\infty$ and $1<p<\infty$.	Assume that $f\equiv0$ and let $w$ be a viscosity solution to equation \eqref{maineq} in $Q_1$. For all $r\in (0,\frac34)$, there exist constants $ C_0=C_0(p,n, \gamma)>0$  and $\beta_1=\beta_1(p,n, \gamma)>0$ such that 
		\begin{equation}
		\begin{split}
		\norm{w}_{C ^{1+\beta_1, (1+\beta_1)/2}( Q_{r})}\leq  C_0(1+\norm{w}_{L^\infty(Q_1)}).
		\end{split}
		\end{equation}\end{theorem}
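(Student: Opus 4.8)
The plan is to obtain Theorem~\ref{imbert} as a consequence of the results of \cite{IJS17} combined with the intrinsic scaling structure that appears in the sketch above. Since $f\equiv 0$, the equation \eqref{maineq} is exactly the homogeneous equation treated by Imbert, Jin and Silvestre, so the interior $C^{1+\beta_1,(1+\beta_1)/2}$ regularity of $w$ on a fixed sub-cylinder follows directly from their work; the only content to be checked is that the bound can be written in the quantitative form $C_0(1+\norm{w}_{L^\infty(Q_1)})$, with $C_0$ and $\beta_1$ depending only on $p,n,\gamma$, and that the estimate holds on all $Q_r$ with $r\in(0,3/4)$ (with the constant then possibly depending also on $r$, or absorbed by covering).

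First I would normalize. The estimate of \cite{IJS17} is naturally stated for solutions bounded by $1$ (or on the unit cylinder with a universal constant); to pass to a general bounded $w$, I would use the same nonlinear rescaling $w\mapsto \theta w(x,\theta^\gamma t)$ with $\theta=(1+\norm{w}_{L^\infty(Q_1)})^{-1}$ that is recorded in the ``Normalization and scaling'' paragraph. Because $f\equiv 0$, the rescaled function $w_\theta$ solves the same homogeneous equation and satisfies $\norm{w_\theta}_{L^\infty(Q_1)}\le 1$, so the universal estimate of \cite{IJS17} applies to $w_\theta$ on, say, $Q_{3/4}$. Undoing the scaling, one has $Dw(x,t)=\theta^{-1}\,Dw_\theta(\theta^{-\gamma}x\text{-variables})$—more precisely one must track how the $C^{1+\beta_1}$ seminorm transforms under the anisotropic dilation $(x,t)\mapsto(x,\theta^\gamma t)$: the spatial Hölder seminorm of $Dw$ picks up a factor $\theta^{-1-\beta_1}$ and the time seminorm a factor $\theta^{-1-\beta_1}\theta^{-\gamma(1+\beta_1)/2}$, while $\theta^{-1}\le 1+\norm{w}_{L^\infty(Q_1)}$. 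Choosing $C_0$ to absorb the (universal) powers of $\theta^{-1}$ that are at least $1$, one arrives at the claimed linear-in-$\norm{w}_{L^\infty}$ bound. The passage from $Q_{3/4}$ to an arbitrary $Q_r$, $r<3/4$, is immediate since $Q_r\subset Q_{3/4}$ and the seminorms are monotone in the domain.

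The step I expect to require the most care is the bookkeeping of the anisotropic scaling: one must make sure the time-Hölder exponent $(1+\beta_1)/2$ and the factor $\lambda^{-\gamma}$ (here with $\lambda=\theta^{-1}$) in the definition of the intrinsic cylinders are handled consistently, so that no hidden dependence on $\norm{w}_{L^\infty(Q_1)}$ with the wrong power, or on the ellipticity degeneracy, sneaks into $C_0$ or $\beta_1$. A secondary point is purely a matter of citation hygiene—identifying precisely which statement in \cite{IJS17} gives the interior $C^{1,\alpha}$ bound in the form needed here (their main theorem versus its local/quantitative corollary), and noting that it covers the full range $-1<\gamma<\infty$ and $1<p<\infty$. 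Once these are settled, the proof is essentially a one-paragraph reduction, and I would present it as such rather than reproving any regularity from scratch.
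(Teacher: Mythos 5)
The statement you were asked to prove is not proved in the paper at all: it is recorded verbatim as a known result of \cite{IJS17} (the interior regularity theorem for the homogeneous equation), already in the quantitative form $C_0(1+\norm{w}_{L^\infty(Q_1)})$ with $C_0,\beta_1$ depending only on $p,n,\gamma$ and valid for the whole range $-1<\gamma<\infty$, $1<p<\infty$; passing from their reference cylinder to $Q_r$ for a fixed $r<3/4$ is a routine translation/covering matter. So your main route --- citing \cite{IJS17} --- is exactly the paper's.

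What does not hold up is the normalization step that you present as ``the only content to be checked''. First, $\theta^{-1}=1+\norm{w}_{L^\infty(Q_1)}$ is not a universal quantity, so the extra powers of $\theta^{-1}$ created by undoing the anisotropic dilation cannot be ``absorbed into $C_0$'': undoing $w_\theta(x,t)=\theta w(x,\theta^\gamma t)$ multiplies the time seminorm of $Dw$ by $\theta^{-1-\gamma\beta_1/2}$ and the $(1+\beta_1)/2$-time seminorm of $w$ by $\theta^{-1-\gamma(1+\beta_1)/2}$ (the spatial seminorm of $Dw$ only by $\theta^{-1}$, not $\theta^{-1-\beta_1}$, since space is not dilated), so your reduction yields a bound of order $(1+\norm{w}_{L^\infty(Q_1)})^{1+\gamma(1+\beta_1)/2}$, which is superlinear for $\gamma>0$ and not the stated estimate; moreover the estimate for $w_\theta$ on a fixed cylinder only controls $w$ on the time-shrunk cylinder $B_{3/4}\times(-\theta^{\gamma}(3/4)^2,0]$, so a time-covering with a $\theta$-dependent number of pieces would also enter. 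Second, for $-1<\gamma<0$ and $\theta\le 1$ the map $t\mapsto\theta^\gamma t$ dilates time, so $w_\theta$ is a solution only on $B_1\times(-\theta^{-\gamma},0]\subsetneq Q_1$; this is precisely why the paper's ``Normalization and scaling'' paragraph is restricted to $\gamma\ge 0$, and your reduction as written does not cover the singular range, although the theorem claims it. If the estimate of \cite{IJS17} were really available only for normalized solutions, your argument would therefore not recover the theorem; as it stands, the correct proof is the citation itself (plus the trivial enlargement of the cylinder), and the scaling discussion should be dropped or redone.
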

		We will need the following result for uniformly parabolic equations with smooth coefficients depending on the gradient \cite[Theorem 1.1]{LU68} (see also \cite[Lemma 12.13]{Lieb96}).
		\begin{theorem}\label{ladyhope}
	Define $\Omega_T:=\Omega\times (-T, 0)$, where $\Omega\subset \R^n$ is a bounded domain, and let $g\in C(\Omega_T)\cap L^\infty(\Omega_T)$.	Let $w$ be a strong solution to the  equation 
		$$\partial_t w-\sum_{i,m} a_{im}(Dw)\dfrac{\partial^2 w}{\partial x_i\partial x_m}=g,  $$
			where the coefficients $a_{im}(z)$ 
		are differentiable with respect to $z$  in the set $$\left\{ (x,t)\in \Omega_T, |w(x,t)|\leq K, |Dw(x,t)|\leq K\right\}$$
		and   satisfy the following conditions:
		\begin{align*}
		i)\quad& \lambda |\xi|^2\leq \sum_{i,m} a_{im}(Dw)\xi_i\xi_m\leq \Lambda |\xi|^2\qquad \text{for some}\quad 0<\lambda\leq \Lambda,\\
	ii)\quad& \qquad\qquad	\underset{\Omega_T}{\max}\left|\dfrac{\partial a_{im}(Dw)}{\partial z_k}\right|\leq \mu_1.
		\end{align*}
	Assume also that $\norm{g}_{L^\infty(\Omega_T)}\leq \mu_2$.
Then there exists a constant 
 $\bar\alpha:=\bar\alpha(\lambda, \Lambda, K, \mu_1, \mu_2)>0$  such that  for any $Q' \subset\subset\Omega_T$  it holds
\begin{equation}
[Dw]_{C^{\bar\alpha, \bar\alpha/2}(Q')}\leq \bar C_0.
\end{equation}
where $\bar C_0:=\bar C_0(\lambda, \Lambda, 	K, \mu_1, \norm{g}_{L^\infty(\Omega_T)}, \dist(Q', \partial_p \Omega _T))>0$.
		\end{theorem}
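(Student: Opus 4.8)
The plan is to obtain this estimate from the classical interior gradient--Hölder theory for quasilinear uniformly parabolic equations in non-divergence form, developed in \cite{LU68} and \cite[Ch.~XII]{Lieb96}; in the body of the paper we simply invoke it. Let me nonetheless indicate how that argument runs. We already have uniform parabolicity together with the bounds $|w|,|Dw|\le K$, and the goal is a quantitative interior oscillation-decay estimate for $Dw$. The first step is to linearise. For a unit vector $e$, differentiating the equation formally (or, since $g$ is merely bounded, working with the difference quotients $\tau^{-1}(w(\cdot+\tau e,\cdot)-w)$ and passing to the limit) shows that $v:=D_e w$ is a strong solution of
\begin{equation*}
\partial_t v-\sum_{i,m}a_{im}(Dw)\,D_{im}v=\sum_{i,m,k}\frac{\partial a_{im}(Dw)}{\partial z_k}\,D_{im}w\,D_k v+D_e g,
\end{equation*}
a linear uniformly parabolic equation in non-divergence form whose principal coefficients $a_{im}(Dw(x,t))$ have eigenvalues in $[\lambda,\Lambda]$ and whose first-order coefficients are bounded by $C(n)\,\mu_1\,|D^2w|$.

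The second step is a decay-of-oscillation estimate for $v$ along a nested family of parabolic cylinders $Q_{\rho_k}\subset\subset\Omega_T$ with $\rho_k=\theta^k\rho_0$. On each $Q_\rho$ one freezes the principal part at the centre, $A_0:=(a_{im}(Dw(0)))$, and compares $v$ with the solution $h$ of the constant-coefficient equation $\partial_t h=\tr(A_0 D^2 h)$ carrying the same data on $\partial_p Q_\rho$: the classical interior estimates for constant-coefficient parabolic equations give $\osc_{Q_{\theta\rho}}(Dh-Dh(0))\le C\,\theta^{\bar\alpha_0}\,\osc_{Q_\rho}Dh$ for some $\bar\alpha_0=\bar\alpha_0(n,\lambda,\Lambda)$, while $v-h$ is controlled by an Aleksandrov--Bakelman--Pucci type maximum principle in which $\norm{g}_{L^\infty}$ enters as an $L^{n+1}$ term, the frozen error $a_{im}(Dw)-A_0$ is estimated by $\mu_1\,\osc_{Q_\rho}Dw$, and $\norm{D^2w}_{L^{n+1}(Q_\rho)}$ is controlled through the $L^q$ (Calderón--Zygmund) theory. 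Iterating the resulting recursive inequality over $k$ yields $\osc_{Q_{\rho_k}}Dw\le C\rho_k^{\bar\alpha}$ after subtracting suitable constant vectors, with $\bar\alpha=\bar\alpha(\lambda,\Lambda,K,\mu_1,\mu_2)$, which is the assertion; the dependence of $\bar C_0$ on $\dist(Q',\partial_p\Omega_T)$ enters through the admissible size $\rho_0$ of the first cylinder.

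The step I expect to be the main obstacle --- and the reason this is a genuine theorem rather than a routine perturbation argument --- is closing the loop: the smallness exploited in the freezing step, as well as the $L^q$ bounds on $D^2w$ needed both to absorb the first-order term and to run the maximum-principle comparison, a priori involve the very modulus of continuity of $Dw$ that one is trying to estimate. The scheme must therefore be run self-consistently: first one extracts a \emph{qualitative} interior modulus of continuity for $Dw$ from the interior $W^{2,1}_q$ estimates for linear non-divergence parabolic equations with continuous coefficients and bounded right-hand side (legitimate since $w$ is a strong solution, so $a_{im}(Dw(\cdot))$ is continuous), and then the oscillation-decay iteration upgrades this to the quantitative estimate with constants depending only on $n$, $\lambda$, $\Lambda$, $K$, $\mu_1$, $\norm{g}_{L^\infty(\Omega_T)}$ and $\dist(Q',\partial_p\Omega_T)$. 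This delicate bookkeeping is precisely what is carried out in \cite{LU68,Lieb96}, which we use as a black box.
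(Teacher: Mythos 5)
The paper does not prove this statement at all: it is imported as a known classical result, cited directly from \cite[Theorem 1.1]{LU68} and \cite[Lemma 12.13]{Lieb96}, with the remark that follows it only explaining how to pass from strong to viscosity solutions by approximating $g$. Your proposal takes exactly the same route — invoking those references as a black box — so it matches the paper's treatment; the sketch of the linearisation/freezing/oscillation-decay argument you append is a fair outline of the classical proof but is not load-bearing here.
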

\begin{remark}
We will apply the result of Theorem \ref{ladyhope} in Section \ref{sect5}. The result of Theorem \ref{ladyhope} was stated for strong solutions. A priori our solutions are only viscosity solutions. This can be re-mediated by approximating $g$ with smooth functions $g_\eps$. Then the corresponding solutions $w_\eps$ solve a uniformly parabolic equation with Hölder continuous coefficients. By standard regularity results (see \cite[Theorem 14.10]{Lieb96} and \cite[Theorem 5.1]{Lieb96}), we conclude that $w_\eps$ are strong solutions and they converge locally uniformly towards $w$. The most important thing is that the $C^{1+\bar\alpha, \frac{1+\bar\alpha}{2}}$ norm of $w_\eps$ does not depend on the regularity of $g_\eps$.

\end{remark}
\section{ Lipschitz estimates and study of the equation for deviation from planes}\label{sect3}
In this section we analyze the problem \eqref{devia} and provide  regularity estimates which will be needed in the next section.
These regularity results are obtained by using standard techniques in the theory of viscosity
solutions.  We first  provide local Hölder and Lipschitz estimates with respect to the space variable for viscosity solutions of \eqref{maineq}. \\

\noindent{ \bf Lipschitz and Hölder estimates for solutions to \eqref{maineq}}
 Let us recall that, in the setting of viscosity solutions, there are essentially two approaches for proving Hölder regularity: either by Aleksandrov-Bakelman–Pucci (ABP)  estimates,  Krylov-Safonov  estimates \cite{ks79, ks80} and Harnack type inequalities, or by the Ishii-Lions’s method  \cite{ishiilions}. The first method is suitable for uniformly parabolic equations (for example for $\gamma=0$).  We point out that Argiolas-Charro-Peral \cite{arg} provided an ABP estimate for solutions of \eqref{maineq} and that recently for $f=0$, Parviainen and Vázquez \cite{PV18} provided  a Harnack estimate for solutions of \eqref{maineq}. 
More  direct viscosity methods like the method proposed by Ishii and Lions apply under weaker ellipticity assumptions but do not seem to yield further regularity results beyond the Lipschitz regularity. In this work we make the choice  to  use this second  method   (see also \cite{dem11,IJS17,IS} for further applications). For the Lipschitz estimates,  we avoid the Bernstein method which would require a higher regularity of the source term $\bar f$  (see \cite{barles,LU68, Lieb96, manfredini}).

\begin{lemma}\label{gort}
Let $-1<\gamma<\infty$, $1<p<\infty$ and $f\in C(Q_1)\cap L^\infty(Q_1)$.		Let $u$ be a bounded viscosity solution to equation \eqref{maineq}. There exists a constant $C=C(p,n,\gamma, \beta)>0$ such that   for all  $(x,t),(y,t)\in Q_{7/8}$, it holds
		\begin{equation}
		\begin{split}
		\abs{ u(x,t)-u(y,t)}\leq C \left(\norm{  u}_{L^\infty(Q_1)}+\norm{  u}_{L^\infty(Q_1)}^{\frac{1}{1+\gamma}}+\norm{f}_{L^\infty(Q_1)}^{\frac{1}{1+\gamma}}\right) \abs{x-y}.
		\end{split}
		\end{equation}
	\end{lemma}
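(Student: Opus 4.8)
The plan is to prove this Lipschitz estimate by the Ishii--Lions doubling-of-variables technique, applied to the spatial variables only while freezing time. First I would normalize: after the rescaling described in the preliminaries it suffices to bound $|u(x,t)-u(y,t)|$ for $(x,t),(y,t)$ in a slightly smaller cylinder, say $Q_{7/8}$, assuming $\|u\|_{L^\infty(Q_1)}\le 1$ and $\|f\|_{L^\infty(Q_1)}\le 1$; the full estimate then follows by tracking the scaling. Fix $(x_0,t_0)$ and introduce a smooth cutoff $\zeta\in C_c^\infty(B_{7/8})$ (or a polynomial-type localizing term $|x-x_0|^2$, $|y-x_0|^2$) and the auxiliary function
\[
\Phi(x,y,t)=u(x,t)-u(y,t)-L\,\omega(|x-y|)-M\bigl(|x-x_0|^2+|y-x_0|^2\bigr),
\]
where $\omega(s)=s-\omega_0 s^{1+\sigma}$ on $[0,s_0]$ is a standard concave modulus (so $\omega'>0$, $\omega''\le -c<0$ near $0$), and $L,M$ are large constants to be chosen. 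The goal is to show $\Phi\le 0$ on $Q_{7/8}\times Q_{7/8}$ for suitable $L$; taking $x=(x_0,\cdot)$ near the diagonal then yields the Lipschitz bound with constant $L$.

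The core step is the contradiction argument: suppose $\Phi$ attains a positive maximum at an interior point $(\bar x,\bar y,\bar t)$ with $\bar x\ne\bar y$. Apply the parabolic theorem on sums (Jensen--Ishii / Crandall--Ishii lemma, valid in the viscosity setting for \eqref{maineq}) to obtain limiting sub/superjets: there are $a\in\R$ and symmetric matrices $X,Y$ with
\[
(a, D_x\psi, X)\in \overline{\mathcal P}^{2,+}u(\bar x,\bar t),\qquad (a, -D_y\psi, Y)\in \overline{\mathcal P}^{2,-}u(\bar y,\bar t),
\]
where $\psi$ is the test part $L\omega(|x-y|)+M(|x-x_0|^2+|y-x_0|^2)$, and $X-Y$ satisfies the usual matrix inequality controlled by $D^2\psi$. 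The key point is that at the maximum $D_x\psi$ and $-D_y\psi$ are both close to $L\omega'(|\bar x-\bar y|)\,e$ with $e=(\bar x-\bar y)/|\bar x-\bar y|$, hence of size $\approx L$, so both are nonzero as long as $L$ is large (this is where one must rule out the ``gradient $=0$'' clause of the definition of viscosity solution: since the relevant test functions have nonvanishing gradient, only alternative i) can occur). Then subtract the two viscosity inequalities:
\[
|D_x\psi|^\gamma\,\mathrm{tr}\bigl(A(D_x\psi)X\bigr)-|D_y\psi|^\gamma\,\mathrm{tr}\bigl(A(-D_y\psi)Y\bigr)\ \le\ f(\bar x,\bar t)-f(\bar y,\bar t)\le 2\|f\|_{L^\infty},
\]
where $A(z)=I+(p-2)\frac{z\otimes z}{|z|^2}$. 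One exploits the matrix inequality, tested against the rank-one direction $e$ and against the orthogonal complement, to extract from $X-Y$ a strictly negative contribution of order $L\,\omega''(|\bar x-\bar y|)\asymp -cL$ in the $e$-direction (using ellipticity: $A(z)$ is uniformly elliptic with constants depending only on $p$, with $e$ being an eigendirection when $z\parallel e$), while the terms produced by the $M$-localization and the cross terms are $O(M)$ plus lower order. Balancing the homogeneity factors $|D_x\psi|^\gamma,|D_y\psi|^\gamma\asymp L^\gamma$, the good term is of size $\gtrsim c\,L^{1+\gamma}$, which for $L$ chosen large (depending on $p,n,\gamma,M,\|f\|_\infty$) beats the bad terms $O(L^{\gamma} M)+O(\|f\|_\infty)$, giving a contradiction; then $M$ is fixed by the cutoff geometry, i.e.\ $M\sim\|u\|_{L^\infty}$ so that $\Phi<0$ on the lateral boundary automatically, which forces the choice $L\asymp \|u\|_{L^\infty}+\|u\|_{L^\infty}^{1/(1+\gamma)}+\|f\|_{L^\infty}^{1/(1+\gamma)}$ after undoing the normalization.

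The main obstacle is handling the degeneracy/singularity of the homogeneity factor $|Dw|^\gamma$ together with the fact that the two gradients $D_x\psi$ and $-D_y\psi$ are not equal: when $\gamma\ne 0$ one cannot simply factor out a common $|D\psi|^\gamma$, and for $\gamma<0$ the coefficient blows up if the gradient were small, so it is essential that at the maximum point the relevant gradient has magnitude comparable to $L\omega'(|\bar x-\bar y|)\gtrsim L$, bounded away from $0$, uniformly. This is exactly why the modulus $\omega$ must be chosen concave but with $\omega'$ bounded below on the range of $|\bar x-\bar y|$, and why the doubling is done in space only (so $a$ cancels and no parabolic boundary term in time appears). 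A secondary technical point is the passage from viscosity solutions of \eqref{maineq} to the jet inequalities when the test gradient could vanish — handled by noting the auxiliary test functions here have nonzero gradient at the maximum, so Demengel's alternative-ii) clause is irrelevant, and the stability/comparison apparatus cited from \cite{dem11, OS} applies. Once the a priori bound on the smaller cylinder is established, a covering/scaling argument extends it to $Q_{7/8}$ with the stated dependence on the norms.
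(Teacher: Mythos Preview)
Your overall framework is right---the paper also uses the Ishii--Lions doubling argument with a concave modulus and quadratic localization---but there is a genuine gap in your one-step scheme, and it is precisely the step where you assert that ``the terms produced by the $M$-localization and the cross terms are $O(M)$ plus lower order.'' They are not.

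Write $a_1=L\omega'(|\bar x-\bar y|)e+M(\bar x-x_0)$ and $a_2=L\omega'(|\bar x-\bar y|)e-M(\bar y-x_0)$, so $|a_i|\asymp L$ but $|a_1-a_2|$ is only bounded by $O(M)$, \emph{with no smallness in $|\bar x-\bar y|$}. The matrix inequality forces $\|Y\|\lesssim L\bigl(\omega'(s)/s+|\omega''(s)|\bigr)\sim L/s$ with $s=|\bar x-\bar y|$, since for your Lipschitz modulus $\omega'\asymp 1$. Hence the cross term coming from $\tr\bigl((A(a_1)-A(a_2))Y\bigr)$ is of size
\[
|a_1|^\gamma\cdot\|Y\|\cdot|\hat a_1-\hat a_2|\ \lesssim\ L^\gamma\cdot\frac{L}{s}\cdot\frac{M}{L}\ =\ \frac{L^\gamma M}{s},
\]
while your good term is $|a_1|^\gamma\cdot L\,|\omega''(s)|\asymp L^{\gamma+1}s^{\sigma-1}$. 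The ratio good/bad is $L\,s^{\sigma}/M\to 0$ as $s\to 0$, so the contradiction cannot be closed no matter how large $L$ is chosen. The same obstruction appears in the term $(|a_1|^\gamma-|a_2|^\gamma)\tr(A(a_2)Y)$. (In the H\"older step this does \emph{not} happen, because there $|a_i|\asymp L s^{\beta-1}\to\infty$ as $s\to 0$, which suppresses $|\hat a_1-\hat a_2|$.)

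The paper's proof (Section~\ref{sect6}) resolves this by a two-step bootstrap: first prove the $C^{0,\beta}_x$ estimate with $\varphi(s)=s^\beta$ (Lemma~\ref{holdnormsol}), then feed it back into the doubling argument for the Lipschitz modulus $\varphi(s)=s-\kappa_0 s^{\nu}$. The H\"older bound upgrades the control on the localization points to $M|\bar x-x_0|,\,M|\bar y-y_0|\le C_H\,s^{\beta/2}$, hence $|a_1-a_2|\le 2C_H s^{\beta/2}$; the cross term then becomes $L^\gamma C_H s^{\beta/2-1}$, and the choice $\nu=1+\beta/2$ makes it exactly comparable to the good term $L^{\gamma+1}s^{\nu-2}$, closing the argument for $L\gtrsim C_H$. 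Without this preliminary H\"older step your scheme does not go through.
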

	 In order to keep the paper easy to read, the proofs are  postponed to Section \ref{sect6}.\\

Next, we use an extension of Kruzhkov's regularity theorem in time (see \cite{Kr69}) to provide a uniform control on the Hölder norm with respect to the time variable. This method was used in \cite{barlesbile,IJS17,gild}  and is based on the interplay between the regularity in time and space.  One could adapt the argument of \cite{IJS17} after tacking into account the source term. Here we give the details for completeness.

\begin{lemma}\label{regutime1}
Assume that $-1<\gamma<\infty$, $1<p<\infty$ and $f\in C(Q_1)\cap L^\infty(Q_1)$.  Let $u$ be a viscosity solution to \eqref{maineq} with $\underset{Q_1}{\osc} \, u\leq A$. Then there exists a constant $C=C(p,n, \gamma, A, \norm{f}_{ L^\infty(Q_1)})>0$ such that
\begin{equation}
\underset{\substack{(x,t), (x,s)\in Q_{11/16},\\
s\neq t}}{\sup}\,\dfrac{|u(x,t)-u(x,s)|}{|t-s|^{\nu}}\leq C,
\end{equation}
where $\nu:=\min\left(\frac12, \frac{1}{2+\gamma}\right)$.
\end{lemma}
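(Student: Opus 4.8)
The plan is to deduce the time regularity from the spatial Lipschitz estimate of Lemma \ref{gort} via a barrier/comparison argument of Kruzhkov type, exploiting the fact that the equation has bounded source and a diffusion term that vanishes only when $Du=0$. Fix a point $(x_0,t_0)\in Q_{11/16}$ and set $a:=|Du|$-free; more precisely, the idea is the following. After the normalization we may assume $\osc_{Q_1}u\le A$. For a fixed spatial point $x_0$, I want to control $|u(x_0,t)-u(x_0,t_0)|$ in terms of $|t-t_0|^{\nu}$. The standard trick is to use, for a parameter $\xi>0$ to be optimized, comparison functions of the form
\[
\psi^{\pm}(x,t)=u(x_0,t_0)\pm\Bigl(L_0\,\frac{|x-x_0|^2}{\xi}+ B(t-t_0)\Bigr),
\]
where $L_0$ is the spatial Lipschitz constant from Lemma \ref{gort} (applied on $Q_{7/8}$, which contains small cylinders around points of $Q_{11/16}$), and $B=B(\xi)$ is chosen large enough that $\psi^{+}$ is a supersolution and $\psi^{-}$ a subsolution of \eqref{maineq} on a small cylinder $B_{\rho}(x_0)\times(t_0,t_0+\tau]$. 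On the lateral boundary $|x-x_0|=\rho$ one uses the spatial Lipschitz bound $|u(x,t)-u(x_0,t)|\le L_0\rho$ together with $L_0\rho^2/\xi\ge L_0\rho$ provided $\xi\le\rho$; at the bottom $t=t_0$ the inequality is immediate. Then the comparison principle \cite[Theorem 1]{dem11} gives $u(x,t)\le\psi^{+}(x,t)$ (and $\ge\psi^{-}$) on the cylinder, and evaluating at $x=x_0$ yields $|u(x_0,t)-u(x_0,t_0)|\le B(\xi)\,|t-t_0|$.

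The point is to track how $B$ depends on $\xi$. Plugging $\psi^{\pm}$ into the operator: $D\psi^{\pm}=\pm\frac{2L_0}{\xi}(x-x_0)$, so $|D\psi^{\pm}|\approx \frac{L_0}{\xi}|x-x_0|$, and $D^2\psi^{\pm}=\pm\frac{2L_0}{\xi}I$, hence $\Delta_p^N\psi^{\pm}$ is of order $\frac{L_0}{\xi}$ (with constant depending on $p,n$). Therefore $|D\psi^{\pm}|^{\gamma}\,|\Delta_p^N\psi^{\pm}|\lesssim \bigl(\frac{L_0\rho}{\xi}\bigr)^{\gamma}\cdot\frac{L_0}{\xi}$, and for $\psi^{+}$ to be a supersolution we need
\[
B\ \ge\ \|f\|_{L^\infty(Q_1)} + C(p,n)\,\Bigl(\tfrac{L_0\rho}{\xi}\Bigr)^{\gamma}\tfrac{L_0}{\xi}.
\]
Since $\rho$ is a fixed small radius, this is $B\approx \xi^{-(1+\gamma)}$ when $\gamma\ge 0$ (up to harmless additive constants). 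The constraint that forces $t-t_0$ small relative to $\xi$ is that the barrier must stay inside the region where comparison is legitimate, i.e. $B\,\tau\lesssim$ a constant; combining $|u(x_0,t)-u(x_0,t_0)|\le B(\xi)|t-t_0|\approx \xi^{-(1+\gamma)}|t-t_0|$ and then optimizing over admissible $\xi$ (choosing $\xi\sim|t-t_0|^{1/(2+\gamma)}$ when $\gamma\ge0$, to balance with the lateral-boundary constraint $\xi\le\rho$ and the requirement $L_0\rho^2/\xi \ge L_0\rho$) gives the exponent $\nu=\frac{1}{2+\gamma}$. In the singular range $-1<\gamma<0$ the diffusion term is instead bounded by $C|D\psi^{\pm}|^{\gamma}\frac{L_0}{\xi}$ which near the center blows up; here one keeps the $|x-x_0|^2$ barrier and the relevant balance yields $\nu=\frac12$, so overall $\nu=\min\bigl(\frac12,\frac{1}{2+\gamma}\bigr)$ as claimed. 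One must also handle the case $Du=0$ at the contact point using the second branch (i)/(ii) of the definition of viscosity solution — but since our barriers have $D\psi^{\pm}\ne0$ away from $x_0$, and the two-branch definition is designed exactly so that comparison with such $\varphi$ goes through, this is routine given \cite[Theorem 1]{dem11}.

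The main obstacle I anticipate is making the comparison step fully rigorous at the level of viscosity solutions: verifying that $\psi^{\pm}$ are admissible super/subsolutions near the degeneracy set $\{D\psi^{\pm}=0\}=\{x=x_0\}$, and invoking the comparison principle of \cite{dem11} on a cylinder whose lateral and bottom data are controlled by the spatial Lipschitz estimate. A clean way around the degeneracy is to replace $|x-x_0|^2$ by a barrier that is strictly radial and smooth but whose gradient is genuinely nonzero, e.g. $\eta(|x-x_0|)$ with $\eta$ convex, $\eta'(0^+)>0$ — but this conflicts with $\eta(0)=0$; alternatively, one translates the test point slightly and uses upper/lower semicontinuity to take a limit, which is the device used in \cite{IJS17} and which I would follow. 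The remaining computations — checking the supersolution inequality, choosing $B(\xi)$, and optimizing $\xi$ — are elementary once the correct powers are identified, and the statement says the details mirror \cite{IJS17} with the source term added, so I would present the barrier construction in full and then refer to \cite{IJS17, Kr69} for the bookkeeping, exactly as the lemma's preamble suggests.
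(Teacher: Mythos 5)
Your overall strategy (Kruzhkov-type barriers plus the spatial Lipschitz estimate of Lemma \ref{gort} and the comparison principle of \cite{dem11}) is the same as the paper's, but the execution has genuine gaps. First, the bottom-of-cylinder comparison is not ``immediate'': at $t=t_0$ the spatial Lipschitz estimate gives $u(x,t_0)-u(x_0,t_0)\le L_0|x-x_0|$, and the quadratic barrier $L_0|x-x_0|^2/\xi$ lies \emph{below} this cone precisely for $|x-x_0|<\xi$, i.e.\ near the center where you evaluate. The fix is an additive penalty: the paper's barrier is $u(0,t_0)+M_1(t-t_0)+\frac{M_2}{\eta}|x|^2+\eta$, and the Young-type inequality $C_{\Lip}|x|\le \frac{C_{\Lip}^2}{\eta}|x|^2+\eta$ is what makes the initial comparison work. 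This additive $\eta$ is not cosmetic: it reappears in the conclusion, $u(x_0,t)-u(x_0,t_0)\le C\eta^{-(\gamma+1)}(t-t_0)+\norm{f}_{L^\infty}(t-t_0)+\eta$, and it is the optimization of $\eta^{-(\gamma+1)}|t-t_0|$ against $\eta$ that produces the exponent $\frac{1}{2+\gamma}$. Your stated conclusion $|u(x_0,t)-u(x_0,t_0)|\le B(\xi)|t-t_0|$ has no additive term, so taking the largest admissible $\xi$ would give a Lipschitz-in-time bound (false in general), and the subsequent ``optimization in $\xi$'' has nothing to balance against; the exponent cannot be extracted from the argument as written.

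Second, your lateral-boundary estimate on the small cylinder $B_\rho(x_0)\times(t_0,t_0+\tau]$ is circular: to dominate $u$ there you need to control $u(x,t)-u(x_0,t_0)=\bigl(u(x,t)-u(x_0,t)\bigr)+\bigl(u(x_0,t)-u(x_0,t_0)\bigr)$, and the second term is exactly the time oscillation you are trying to prove. The paper avoids this by comparing on the \emph{full} cylinder $B_{11/16}\times(t_0,0]$ and choosing $M_2$ large enough (proportional to the appropriate power of $\norm{u}_{L^\infty(Q_1)}$) so that the barrier exceeds $2\norm{u}_{L^\infty}$ on the fixed lateral boundary. Finally, for the singular range $-1<\gamma<0$ you keep the quadratic barrier, but then $|D\psi^{\pm}|^{\gamma}|\Delta_p^N\psi^{\pm}|\sim \bigl(|x-x_0|/\xi\bigr)^{\gamma}/\xi$ blows up as $x\to x_0$, so no finite $B$ makes $\psi^{+}$ a supersolution near the center; asserting that ``the relevant balance yields $\nu=\frac12$'' skips the actual construction. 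The paper changes the barrier profile to $\frac{M_2}{\eta^{(\gamma+2)/(\gamma+1)}}|x|^{(\gamma+2)/(\gamma+1)}+\eta^{\gamma+2}$, whose exponent is tuned so that $|D\bar w|^{\gamma}$ times the second-order term stays bounded, and only then does the optimization give $|t-t_0|^{1/2}$. These three points (additive penalty, global lateral comparison, modified barrier for $\gamma<0$) are the substance of the proof and need to be supplied.
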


	\begin{proof}

We denote by $\C_{Lip}$ the Lipschitz constant obtained in Lemma \ref{gort}  for the solution $u$.
We start by the case $\gamma\geq 0$.
 We aim to show that for all $t_0\in [-(11/16)^2, 0)$ and $\eta>0$, we can find positive constants $M_1, M_2$
 such that the function
$$\bar v(x,t):=u(0,t_0)+M_1(t-t_0)+\frac{M_2}{\eta}|x|^2+\eta$$ is a supersolution to \eqref{maineq}  in $B_{11/16}\times (t_0, 0)$  and satisfies  $u\leq \bar v$ on $\partial_p (B_{11/16}\times(t_0, 0])$.
Using the boundedness and the spatial Lipschitz regularity of $u$, we have  for all $x\in B_{11/16}$
$$u(x,t_0)-u(0, t_0)\leq  \C_{Lip}|x|\leq  \frac{\C_{Lip}^2}{\eta}|x|^2+\eta,$$
and for $(x,t) \in \partial B_{11/16}\times[t_0, 0]$ we have
$$u(x,t)-u(0, t_0)\leq  2\norm{u}_{L^\infty (Q_{11/16})}\leq 
2\norm{u}_{L^\infty(Q_1)} \frac{16}{11}|x|\leq  \left(\frac{32}{11}\right)^2\frac{\norm{u}_{L^\infty(Q_1)}^2}{\eta}|x|^2+\eta.$$
By taking $$M_2=\C_{Lip}^2+\left(\frac{32}{11}\right)^2\norm{u}_{L^\infty(Q_1)}^2,$$ we have that 
$u\leq \bar v$ on $\partial_p(B_{11/16}\times(t_0, 0])$.
Next,  taking 
$$M_1= \eta^{-(\gamma+1)}M_2^{\gamma+1}C(n,p)+\norm{ f}_{L^\infty(Q_1)},$$
 we get  that  the function $\bar v$ satisfies in the viscosity sense
\begin{align*}
\partial_t \bar v=M_1&=\eta^{-(\gamma+1)}M_2^{\gamma+1}C(n,p)+\norm{ f}_{L^\infty(Q_1)}\\
&\geq |D\bar v|^\gamma\Delta_p^N \bar v+f.
\end{align*}
It follows  that $\bar v$ is a supersolution  of \eqref{maineq} in $B_{11/16}\times(t_0, 0]$, and by the maximum principle, we get that 
$u(x,t)\leq \bar v(x,t)$ for $(x,t)\in B_{11/16}\times[t_0, 0]$.
Consequently, for any $\eta>0$, we have
\begin{align*}
	u(0,t)-u(0, t_0)&\leq \dfrac{C(p,n) M_2^{\gamma+1}(t-t_0)}{\eta^{\gamma+1}}+
	\norm{ f}_{L^\infty(Q_1)}(t-t_0)+\eta.
	\end{align*}
Taking $\eta=(\norm{u}_{L^\infty(Q_1)}^2+\C_{Lip}^2)^{\frac{\gamma+1}{\gamma+2}}|t-t_0|^{\frac{1}{\gamma+2}}$ in this inequality, we end up with
	\begin{align*}
	u(0,t)-u(0, t_0)	&\leq C(p,n)(\norm{u}_{L^\infty(Q_1)}^2+\C_{Lip}^2)^{\frac{\gamma+1}{\gamma+2}}|t-t_0|^{\frac{1}{2+\gamma}}+\norm{ f}_{L^\infty(Q_1)}|t-t_0|\\
	&\leq C(p,n,\gamma, A, \norm{f}_{L^\infty(Q_1)})|t-t_0|^{\frac{1}{2+\gamma}}.
	\end{align*}
 The lower  bound follows by comparing with similar barriers and we get the desired result.

\noindent For $\gamma<0$, consider  the function
$$\bar w(x,t):=u(0,t_0)+M_1(t-t_0)+\frac{M_2}{\eta^{\frac{\gamma+2}{\gamma+1}}}|x|^{\frac{\gamma+2}{\gamma+1}}+\eta^{\gamma+2}.$$ 
Using the boundedness and the spatial Lipschitz regularity of $u$, we have  for all $x\in B_{11/16}$
$$u(x,t_0)-u(0, t_0)\leq  \C_{Lip}|x|\leq \C_{Lip}^{\frac{\gamma+2}{\gamma+1}}\left(\frac{|x|}{\eta}\right)^{\frac{\gamma+2}{\gamma+1}}+\eta^{\gamma+2},$$
and for $(x,t) \in \partial B_{11/16}\times[t_0, 0]$, we have
\begin{align*}
u(x,t)-u(0, t_0)&\leq  2\norm{u}_{L^\infty (Q_{11/16})}\\
&\leq 
2\norm{u}_{L^\infty(Q_1)} \frac{16}{11}|x|\\
&\leq  \left(\frac{32}{11} \norm{u}_{L^\infty(Q_1)}\right)^{\frac{\gamma+2}{\gamma+1}}\left(\frac{|x|}{\eta}\right)^{\frac{\gamma+2}{\gamma+1}}+\eta^{\gamma+2}.
\end{align*}
So taking $$M_2=\left(\C_{Lip}+\frac{32}{11} \norm{u}_{L^\infty(Q_1)}\right)^{\frac{\gamma+2}{\gamma+1}},$$ we have that 
$u\leq \bar w$ on $\partial_p(B_{11/16}\times(t_0, 0])$.
Next, taking
	$$M_1= \eta^{-(\gamma+2)}M_2^{\gamma+1}C(n,p)+\norm{ f}_{L^\infty(Q_1)},$$
 we get  that  the function $\bar w$ satisfies in the viscosity sense
\begin{align*}
\partial_t \bar w=M_1&=\eta^{-(\gamma+1)}M_2^{\gamma+1}C(n,p)+\norm{ f}_{L^\infty(Q_1)}\\
&\geq |D\bar w|^\gamma\Delta_p^N \bar w+ f.
\end{align*}
From the comparison principle we get that
$u(x,t)\leq \bar w(x,t)$ for $(x,t)\in B_{11/16}\times[t_0, 0]$.
Hence, for any $\eta>0$, we have
\begin{align*}
	u(0,t)-u(0, t_0)&\leq \dfrac{C(p,n) M_2^{\gamma+1}(t-t_0)}{\eta^{\gamma+2}}+
	\norm{ f}_{L^\infty(Q_1)}(t-t_0)+\eta^{\gamma+2}.
	\end{align*}
Taking $\eta= M_2^{\frac{\gamma+1}{2(\gamma+2)}}|t-t_0|^{\frac{1}{2(\gamma+2)}}=(\norm{u}_{L^\infty(Q_1)}^2+\C_{Lip}^2)^{\frac{\gamma+1}{\gamma+2}}|t-t_0|^{\frac{1}{2(\gamma+2)}}$ in this inequality, we get
	\begin{align*}
	u(0,t)-u(0, t_0)	&\leq C(p,n)(\norm{u}_{L^\infty(Q_1)}+\C_{Lip})^{\frac{\gamma+1}{2}}|t-t_0|^{\frac{1}{2}}+\norm{ f}_{L^\infty(Q_1)}|t-t_0|\\
	&\leq C\left(p,n,\gamma, A, \norm{f}_{L^\infty(Q_1)}\right)|t-t_0|^{\frac{1}{2}}.
	\end{align*}
	 The lower  bound follows by comparing with similar barriers. The proof is complete once we recall the dependence of $\C_{Lip}$.
\end{proof}
	 \noindent{\bf Uniform Lipschitz estimates for deviation from planes}
 For $\gamma\geq 0$, we claim  the following uniform  Lipschitz estimates for solutions to \eqref{devia}.  In the singular case, it is not clear if it is possible to provide a similar result.

\begin{lemma}\label{Liphom1}
Assume that  $0\leq\gamma<\infty$, $1<p<\infty$ and  $\bar f\in L^\infty(Q_1)\cap C(Q_1)$. Let $w$ be a bounded viscosity solution of \eqref{devia}. Then there exists a constant $C=C(p,n, \gamma)$ such that  for all $(x,t),(y,t)\in Q_{3/4}$, it holds
\begin{equation}
		\begin{split}
		\abs{ w(x,t)-w(y,t)}\leq C \left(1+\norm{  w}_{L^\infty(Q_1)}+\norm{\bar f}_{L^\infty(Q_1)}\right) \abs{x-y}.
		\end{split}
		\end{equation}
\end{lemma}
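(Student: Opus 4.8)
The plan is to prove the uniform (in the slope $q$) Lipschitz estimate for solutions of \eqref{devia} by the Ishii--Lions doubling-of-variables method, following the standard viscosity-solution scheme but paying close attention to the fact that the ellipticity coefficients now depend on $q$. Fix $(x_0,t_0)\in Q_{3/4}$ and work on a slightly larger cylinder, say $B_{r}(x_0)\times(t_0-r^2,t_0]$ with $r$ a fixed dimensional constant so that this sits inside $Q_{7/8}$; we want to bound $w(x_0,t_0)-w(y_0,t_0)$ for $y_0$ in the same time slice. The auxiliary function will have the form
\[
\Phi(x,y,t):=w(x,t)-w(y,t)-L\,\phi(|x-y|)-\frac{M}{2}\bigl(|x-x_0|^2+|t-t_0|\bigr),
\]
where $\phi$ is a concave modulus of the Lipschitz type, e.g.\ $\phi(s)=s-\omega_0 s^{3/2}$ for $s$ small (so that $\phi'>0$, $\phi''<0$ and $\phi(s)\le s$), $M$ is a large constant depending only on $\|w\|_{L^\infty(Q_1)}$ chosen so the quadratic penalization confines the maximum to the interior, and $L$ is the Lipschitz constant to be determined. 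One assumes by contradiction that $L$ is large; then $\Phi$ attains a positive interior maximum at some $(\bar x,\bar y,\bar t)$ with $\bar x\neq\bar y$, and one produces, via the parabolic theorem on sums (Crandall--Ishii--Lions / Jensen--Ishii lemma), limiting parabolic jets and matrices $X,Y$ with $X\le Y$ (up to a controlled error) and a common time-derivative $\tau$.

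The heart of the argument is then the inequality obtained by subtracting the supersolution inequality at $\bar y$ from the subsolution inequality at $\bar x$. Writing $\xi:=\bar x-\bar y$, $p_1:=L\phi'(|\xi|)\frac{\xi}{|\xi|}+M(\bar x-x_0)$ for the $x$-jet gradient and $p_2:=L\phi'(|\xi|)\frac{\xi}{|\xi|}$ for the $y$-jet gradient, both shifted by the slope $q$, one gets
\[
M \;\le\; |p_1+q|^\gamma\,\mathrm{tr}\!\bigl(A(p_1+q)X\bigr)-|p_2+q|^\gamma\,\mathrm{tr}\!\bigl(A(p_2+q)Y\bigr)+C\|\bar f\|_{L^\infty(Q_1)},
\]
where $A(z)=I+(p-2)\frac{z\otimes z}{|z|^2}$ has eigenvalues between $\min(1,p-1)$ and $\max(1,p-1)$. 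The key structural point, which makes the estimate uniform in $q$, is that $|p_i+q|$ and the degenerate factor $|p_i+q|^\gamma$ appear \emph{symmetrically} and differ only through $|p_1-p_2|=M|\bar x-x_0|\le C$; hence the coefficients $|p_1+q|^\gamma A(p_1+q)$ and $|p_2+q|^\gamma A(p_2+q)$ differ by an amount controlled \emph{relative to} $|p_1+q|^\gamma$ times a modulus of $|p_1-p_2|/|p_1+q|$, while on the good eigenvector (the direction $\xi/|\xi|$) the matrix inequality coming from the doubling delivers a strictly negative contribution of size $\sim |p+q|^\gamma L\,|\phi''(|\xi|)|/|\xi|^{?}$. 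One must split into the two regimes $|q|$ large versus $|q|$ bounded by a fixed multiple of $L$ (recall $|p_i|\approx L$): when $|q|\gg L$ the factor $|p_i+q|^\gamma\approx|q|^\gamma$ is common and essentially cancels out of the leading terms, and when $|q|\lesssim L$ we have $|p_i+q|\le CL$ so $|p_i+q|^\gamma\le CL^\gamma$, which combines with the intrinsic scaling built into \eqref{devia} (compare with the proof strategy already used for Lemma \ref{gort} in Section \ref{sect6}, and with \cite[proof of Lipschitz estimate]{IJS17,IS}) to close the contradiction: the negative second-order term beats $M+C\|\bar f\|_{L^\infty}$ once $L\ge C(1+\|w\|_{L^\infty(Q_1)}+\|\bar f\|_{L^\infty(Q_1)})$.

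The main obstacle I anticipate is precisely the bookkeeping of the $q$-dependence in the ellipticity: one needs the negative term produced by the concavity of $\phi$ to dominate the mismatch term $|p_1+q|^\gamma A(p_1+q)-|p_2+q|^\gamma A(p_2+q)$ \emph{uniformly} in $q$, and this requires choosing $\phi$ (its modulus $\omega_0$ and the exponent $3/2$) so that $|\phi''(s)|/\phi'(s)\to\infty$ as $s\to0^+$ fast enough to absorb a Hölder-type modulus of the coefficients, while the factor $|p+q|^{\gamma}$ (which may be small or large) enters multiplicatively on \emph{both} the good and the bad terms and so drops out of the comparison — this is the reason the estimate is expected to hold for $\gamma\ge0$ but the argument breaks for $\gamma<0$, where $|p+q|^\gamma$ blows up as $|p+q|\to0$ and one loses the uniform control. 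A secondary technical point is verifying that the quadratic penalization with the claimed $M=M(\|w\|_{L^\infty(Q_1)})$ genuinely keeps $\bar x$ away from $\partial B_r(x_0)$ and $\bar t$ away from $t_0-r^2$, so that the jets are interior; this is routine given the $L^\infty$ bound on $w$ and the choice of $r$ as a fixed dimensional constant, together with the time-continuity already available from Lemma \ref{regutime1} if needed to handle the initial-time slice. Once the contradiction is reached, one concludes $w(x_0,t_0)-w(y_0,t_0)\le L|x_0-y_0|$ for all $(x_0,t_0),(y_0,t_0)\in Q_{3/4}$ with the stated dependence of $L$, and the symmetric argument gives the lower bound.
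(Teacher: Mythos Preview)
Your outline captures the Ishii--Lions machinery correctly, but there is a genuine gap in the handling of the $q$-dependence, precisely at the point you flag as ``the main obstacle''. Your claim that the factor $|p_i+q|^\gamma$ ``enters multiplicatively on both the good and the bad terms and so drops out of the comparison'' is false: the localization term $M$ and the forcing term $\|\bar f\|_{L^\infty}$ enter the subtracted viscosity inequality \emph{without} that factor. Concretely, after subtraction one has (schematically)
\[
0 \le C\bigl(M+\|\bar f\|_{L^\infty}\bigr) + |\eta_1|^\gamma\,\mathrm{tr}\bigl(A(\eta_1)(X-Y)\bigr) + \text{(mismatch terms)},
\]
so when $|\eta_i|:=|p_i+q|$ is small the good negative term $|\eta_1|^\gamma L\phi''$ is killed while the zeroth-order terms survive, and no contradiction follows. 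Your dichotomy ``$|q|\gg L$ vs.\ $|q|\lesssim L$'' does not resolve this: in the second regime (which includes $|q|\approx L$) nothing prevents $p_i+q$ from being arbitrarily small, since $|p_i|\approx L\phi'(|\bar x-\bar y|)\approx L$ can cancel $q$ almost exactly. Moreover, splitting on the unknown $L$ is circular.

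The paper's proof in Section~\ref{sect7} fixes exactly this. The split is on $|q|$ versus the \emph{fixed} threshold $\Gamma_0:=2+\|w\|_{L^\infty}+\|\bar f\|_{L^\infty}$. For $|q|\le\Gamma_0$ the non-uniform estimate \eqref{girou} (coming from $h:=w+q\cdot x$ solving \eqref{maineq} and Lemma~\ref{liphomeq}) already gives the result. For $|q|>\Gamma_0$ the paper first records the $q$-dependent Lipschitz bound $|w(x,t)-w(y,t)|\le C|q|\,|x-y|$ (estimate \eqref{ramisto}), and uses it to run an Ishii--Lions argument for a \emph{H\"older} modulus $\phi(s)=s^\beta$ with $\beta$ chosen small enough that the positivity of the maximum forces $2\beta L_2|\bar x-\bar y|^{\beta-1}\le |q|/2$ (inequality \eqref{pogba}). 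This is the crucial step: it yields the lower bound $|\eta_i|\ge|q|/2\ge 1$, so the degenerate factor is harmless and one can divide through by $|\eta_1|^\gamma$. Only after this uniform H\"older estimate (Lemma~\ref{holdnorm}) does the paper bootstrap to Lipschitz (Lemma~\ref{liphomol}), again exploiting $|\eta_i|\ge|q|/2$ together with the H\"older bound to control the penalization displacements. Your direct jump to the Lipschitz modulus $\phi(s)=s-\omega_0 s^{3/2}$ skips both the fixed-threshold split and the intermediate H\"older step, and without them the lower bound on $|\eta_i|$ is unavailable.
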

The proof  follows from classical  long and tedious computations and it is   postponed to  Section \ref{sect7}.  There are already many Lipschitz estimates in the literature for related equations. Here the main difficulty is to track the dependence on $q$ and to provide an estimates which does not depend on $|q|$, especially for $|q|$ large. These uniform estimates will play a crucial role in Section \ref{sect5}.\\

\noindent{\bf Fixing the constants.} We  denote by 
$$C_1=C_1(p,n,\gamma):=C
		 \left(1+\norm{  w}_{L^\infty(Q_1)}+\norm{\bar f}_{L^\infty(Q_1)}\right) $$
		 the Lipschitz constant coming from Lemma \ref{Liphom1} where we fix
		  $\norm{ \bar f}_{L^\infty(Q_1)}=2$ and  $\norm{  w}_{L^\infty(Q_1)}=4$. We set 
		 $$C_2=C_2(p,n, \gamma):=2C_1$$
		 and 
		 $$A_1=A_1(p,n,\gamma):=1+2C_2.$$
		 These constants will appear in Section \ref{sect4} and Section \ref{sect5}.
	
\section{First alternative and improvement of flatness}\label{sect4}

In this section we study a first alternative that corresponds to the Degenerate  Alternative. In this case we  show how to improve the flatness of the solution   in a suitable inner cylinder and how one can iterate this improvement.
\subsection{Approximation}
First we state the improvement of flatness property for solutions to  \eqref{maineq}  and  provide its proof. Our main task consists in showing a linear approximation result for solutions to equation \eqref{maineq}. We proceed by contradiction, using the previous lemmas together with the result of  \cite{IJS17} for the homogeneous equation associated to \eqref{maineq}.
	
	\begin{lemma}[The approximation Lemma]\label{approl}
Let $-1<\gamma<\infty$ and $1<p<\infty$. Let $u$ be a viscosity solution to \eqref{maineq} with $\underset{Q_1}{\osc} \, u\leq A_1=A_1(p,n, \gamma)$.	For every $\eta >0$, there exists  $\eps_0(p,n, \gamma, \eta)\in (0, 1)$ such that  if $\norm{f}_{L^\infty(Q_1)}\leq \eps_0$, then there exists a solution $\bar u$ to 
	\begin{equation}\label{homeq1}\partial_t\bar u=|D\bar u|^\gamma\Delta_p^N\bar u\quad\text{in}\quad Q_{11/16}
	\end{equation}
	such that 
	$$\norm{u-\bar u}_{L^\infty(Q_{5/8})}\leq \eta.$$	
	\end{lemma}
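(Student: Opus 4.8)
The natural strategy is a compactness/contradiction argument. Suppose the conclusion fails. Then there exist $\eta_0>0$ and a sequence of viscosity solutions $u_j$ to
\begin{equation*}
\partial_t u_j = |Du_j|^\gamma \Delta_p^N u_j + f_j \quad \text{in } Q_1,
\end{equation*}
with $\underset{Q_1}{\osc}\, u_j \le A_1$ and $\|f_j\|_{L^\infty(Q_1)} \le 1/j$, such that for every solution $\bar u$ of the homogeneous equation \eqref{homeq1} in $Q_{11/16}$ we have $\|u_j - \bar u\|_{L^\infty(Q_{5/8})} > \eta_0$. After subtracting the constant $u_j(0,0)$ (which does not change the equation) and using $\underset{Q_1}{\osc}\, u_j \le A_1$, the family $\{u_j\}$ is uniformly bounded in $Q_1$, in fact in $Q_{15/16}$ say.

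The plan is then to extract a locally uniformly convergent subsequence. First I would invoke the interior spatial Lipschitz bound, Lemma \ref{gort}, applied to $u_j$ on $Q_{7/8}$: since $\|u_j\|_{L^\infty(Q_1)}$ and $\|f_j\|_{L^\infty(Q_1)}$ are uniformly bounded, $u_j$ is uniformly Lipschitz in $x$ on $Q_{7/8}$, with a constant independent of $j$. Next I would apply the time-regularity estimate, Lemma \ref{regutime1}, on $Q_{11/16}$ (the oscillation hypothesis $\underset{Q_1}{\osc}\, u_j\le A_1$ is exactly what is needed there), giving uniform $C^{0,\nu}$ modulus in $t$ with $\nu = \min(1/2,1/(2+\gamma))$, again with a constant independent of $j$. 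Together these two estimates give equi-continuity and equi-boundedness of $\{u_j\}$ on $Q_{11/16}$, so by Arzelà--Ascoli a subsequence (not relabelled) converges locally uniformly on $Q_{11/16}$ to some continuous function $u_\infty$ with $\underset{Q_{11/16}}{\osc}\, u_\infty \le A_1$.

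Now I would use the stability of viscosity solutions under uniform convergence (cited in the excerpt: \cite[Proposition 3]{dem11}, \cite[Theorem 6.1, 6.2]{OS}): since $u_j \to u_\infty$ locally uniformly and $f_j \to 0$ uniformly, the limit $u_\infty$ is a viscosity solution of the homogeneous equation $\partial_t u_\infty = |Du_\infty|^\gamma \Delta_p^N u_\infty$ in $Q_{11/16}$. Taking $\bar u := u_\infty$, which is an admissible competitor, we get $\|u_j - \bar u\|_{L^\infty(Q_{5/8})} \to 0$, contradicting $\|u_j - \bar u\|_{L^\infty(Q_{5/8})} > \eta_0$ for all $j$. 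This produces the claimed $\eps_0 = \eps_0(p,n,\gamma,\eta)$.

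The main obstacle I expect is ensuring the compactness inputs are genuinely uniform in $j$ and valid on the stated cylinders: Lemma \ref{gort} is stated on $Q_{7/8}$ and Lemma \ref{regutime1} on $Q_{11/16}$, and one must check that subtracting $u_j(0,0)$ keeps $\|u_j\|_{L^\infty}$ controlled purely by $A_1$ (which it does, since the oscillation is controlled and the resulting function vanishes at the origin). A secondary point requiring a little care is the precise form of the stability statement for the Ohnuma--Sato / Demengel notion of solution in the singular range, and the fact that the competitor $\bar u$ need only solve the homogeneous equation in the slightly smaller cylinder $Q_{11/16}$ while the closeness is only claimed in $Q_{5/8}$ — this mismatch is exactly what makes the Arzelà--Ascoli extraction on $Q_{11/16}$ suffice.
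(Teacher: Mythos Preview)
Your proposal is correct and follows essentially the same route as the paper: a contradiction argument that uses the uniform spatial Lipschitz estimate (Lemma~\ref{gort}) together with the time-H\"older estimate (Lemma~\ref{regutime1}) to apply Arzel\`a--Ascoli, and then passes to the limit via stability of viscosity solutions to produce a homogeneous solution $u_\infty$ that serves as the approximating $\bar u$. The only cosmetic difference is that you make the normalization $u_j \mapsto u_j - u_j(0,0)$ explicit, whereas the paper leaves this implicit in its use of the oscillation bound.
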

	\begin{proof}
	
	We argue by contradiction. We assume that there exist a constant $\eta>0$ and 
sequences $\eps_k\to 0$,  $f_k$  and $u_k$ such that $\norm{f_k}_{L^\infty(Q_1)} \leq \eps_k$, and $u_k$ are  solutions in $Q_1$  of 
$$\partial_t u_k=|Du_k|^\gamma\Delta_p^N u_k+f_k,$$	
with $\underset{Q_1}{\osc} \, u_k\leq A_1$ and such that 
\begin{equation}\label{claude}
\norm{u_k-\bar u}_{L^\infty(Q_{5/8})}>\eta
\end{equation}
whenever $\bar u $ is  a viscosity solution to  \eqref{homeq1}.

\noindent Combining  the compactness estimates coming from Lemma  \ref{gort} and Lemma \ref{regutime1} with  the Arzelà-Ascoli's Lemma, we can extract a subsequence of $(u_k)_k$ converging  uniformly in $Q_\rho$ for any $\rho\in (0, 11/16)$ to a continuous function $u_\infty$.  Passing to the limit in the equation, we have that $u_\infty$ solves in $Q_{11/16}$
$$\partial_tu_\infty -|Du_\infty|^\gamma\left[\Delta u_\infty+(p-2)\dfrac{\langle D^2u_\infty  Du_\infty, D u_\infty\rangle}{|D u_\infty|^2}   \right]=0.$$
We end up with a contradiction for \eqref{claude}, since for $k$ large enough we have 
$$\norm{u_k-u_\infty}_{L^\infty(Q_{5/8})}\leq \eta.$$
	\end{proof}

Combining the approximation lemma and the  regularity result of Theorem \ref{imbert}, we  can now state the following improvement of flatness lemma. 
	Define 
$$Q_{\rho}^{(1-\delta)}:=B_{\rho}\times (-\rho^{2}(1-\delta)^{-\gamma}, 0].$$

	\begin{lemma}\label{starting}
	Let $-1<\gamma<\infty$ and $1<p<\infty$. Let $u$ be a viscosity solution to \eqref{maineq} such that $\underset{Q_1}{\osc}\, u\leq A_1=A_1(p,n, \gamma)$. There exist $\eps_0=\eps_0(p,n, \gamma)>0$, $\rho=\rho(p,n, \gamma)>0$  and $\delta=\delta(p,n, \gamma)\in (0,1)$ with $\rho<(1-\delta)^{\gamma+1}$ such that if $\norm{f}_{L^\infty(Q_1)} \leq \eps_0$, then there exists a vector $h$ with $|h|\leq B=B(n, p, \gamma)$ such that
	$$\underset{(x,t)\in Q_{\rho}^{(1-\delta)}}{\osc}\, (u(x,t)-h\cdot x)\leq \rho(1-\delta).$$

	\end{lemma}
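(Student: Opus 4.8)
The strategy is a standard compactness-plus-regularity argument. The plan is to combine Lemma \ref{approl} with the regularity estimate for the homogeneous equation (Theorem \ref{imbert}), and then transfer the linear approximation from the homogeneous solution $\bar u$ back to $u$ by choosing the free parameters $\eta,\rho,\delta$ appropriately. Concretely: given $u$ with $\osc_{Q_1} u\le A_1$ and $\|f\|_{L^\infty(Q_1)}\le\eps_0$ (with $\eps_0$ to be fixed), Lemma \ref{approl} produces a viscosity solution $\bar u$ of the homogeneous equation \eqref{homeq1} in $Q_{11/16}$ with $\|u-\bar u\|_{L^\infty(Q_{5/8})}\le\eta$. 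Since $\osc_{Q_1}u\le A_1$, we have $\osc_{Q_{5/8}}\bar u\le A_1+2\eta\le A_1+2$, so $\|\bar u\|_{L^\infty(Q_{5/8})}$ is bounded by a dimensional constant after subtracting $\bar u(0,0)$ (which does not affect oscillations). Then Theorem \ref{imbert}, applied in, say, $Q_{1/2}$, gives $\|\bar u\|_{C^{1+\beta_1,(1+\beta_1)/2}(Q_{1/2})}\le C_0(1+\|\bar u\|_{L^\infty(Q_{5/8})})=:M$, a constant depending only on $p,n,\gamma$.

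Next I would set $h:=D\bar u(0,0)$; then $|h|\le M=:B(n,p,\gamma)$, which identifies the claimed vector and its bound. From the $C^{1+\beta_1}$ bound on $\bar u$ and Taylor expansion around $(0,0)$ (using $\bar u(0,0)=0$ after normalization), for $(x,t)\in Q_\rho^{(1-\delta)}$ — i.e.\ $|x|<\rho$ and $-\rho^2(1-\delta)^{-\gamma}<t\le 0$ — one gets
\[
|\bar u(x,t)-h\cdot x|\le M\big(|x|^{1+\beta_1}+|t|^{(1+\beta_1)/2}\big)\le M\big(\rho^{1+\beta_1}+(\rho^2(1-\delta)^{-\gamma})^{(1+\beta_1)/2}\big).
\]
Provided $\rho<(1-\delta)^{\gamma+1}$ (the hypothesis of the lemma), one has $\rho^2(1-\delta)^{-\gamma}<\rho^{2}\rho^{-\gamma/(\gamma+1)}\le\rho^{1+1/(\gamma+1)}\le\rho$ when $\rho\le 1$, so both terms are controlled by (a constant times) $\rho^{1+\beta_1}$, giving $\osc_{Q_\rho^{(1-\delta)}}(\bar u-h\cdot x)\le C M\rho^{1+\beta_1}$. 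Combining with the approximation bound, $\osc_{Q_\rho^{(1-\delta)}}(u-h\cdot x)\le 2\eta+CM\rho^{1+\beta_1}$ (the $2\eta$ from $\|u-\bar u\|_{L^\infty}\le\eta$ on $Q_{5/8}\supset Q_\rho^{(1-\delta)}$, which requires checking $Q_\rho^{(1-\delta)}\subset Q_{5/8}$ — true for $\rho$ small).

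The final step is to choose the constants in the right order so that the right-hand side is $\le\rho(1-\delta)$. First fix $\delta\in(0,1)$ (say $\delta=1/2$, or whatever makes the later iteration in Section \ref{sect5} work — this is where the constraint $\rho<(1-\delta)^{\gamma+1}$ feeds in). Then choose $\rho=\rho(p,n,\gamma)>0$ small enough that $CM\rho^{1+\beta_1}\le\tfrac12\rho(1-\delta)$, i.e.\ $\rho^{\beta_1}\le(1-\delta)/(2CM)$, and also small enough that $Q_\rho^{(1-\delta)}\subset Q_{5/8}$ and $\rho<(1-\delta)^{\gamma+1}$. Finally choose $\eta=\eta(p,n,\gamma)>0$ so that $2\eta\le\tfrac12\rho(1-\delta)$, and then let $\eps_0=\eps_0(p,n,\gamma,\eta)=\eps_0(p,n,\gamma)$ be the constant provided by Lemma \ref{approl} for this $\eta$. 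With these choices $\osc_{Q_\rho^{(1-\delta)}}(u-h\cdot x)\le\rho(1-\delta)$, as desired. The only genuinely delicate point is making sure the various smallness requirements on $\rho$ are mutually compatible and, more importantly, that this choice of $\delta$ (and the resulting $\rho$) is consistent with the hypothesis $\rho<(1-\delta)^{\gamma+1}$ and with what the iteration in Section \ref{sect5} needs; everything else is bookkeeping with the Taylor estimate and the inclusion of cylinders.
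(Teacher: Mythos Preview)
Your proposal is correct and follows essentially the same route as the paper: approximate $u$ by a homogeneous solution $\bar u$ via Lemma~\ref{approl}, apply Theorem~\ref{imbert} to $\bar u$ to obtain a linear approximation with $h=D\bar u(0,0)$, transfer back to $u$, and then fix $\delta,\rho,\eta,\eps_0$ in that order. One small wrinkle: your intermediate claim that $\rho<(1-\delta)^{\gamma+1}$ forces the time Taylor term to be $\lesssim\rho^{1+\beta_1}$ is not quite right for $\gamma>0$, but since you have already fixed $\delta$ the time term is in any case $(1-\delta)^{-\gamma(1+\beta_1)/2}\rho^{1+\beta_1}$ and the rest of your argument goes through unchanged --- the paper sidesteps this entirely by estimating $\osc(\bar u-h\cdot x)$ on a standard cylinder $Q_{\mu_0}$ and then \emph{defining} $\rho:=\mu_0(1-\delta)^{\gamma+1}$, which automatically yields both $Q_\rho^{1-\delta}\subset Q_{\mu_0}$ and the constraint $\rho<(1-\delta)^{\gamma+1}$.
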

	\begin{proof}
	
	Let $\bar u$ be the viscosity solution to 
	$$\partial_t \bar u-|D \bar u|^\gamma\Delta_p^N \bar u=0\qquad\text{in}\qquad Q_{11/16}$$
coming from Lemma \ref{approl}.
	From the  regularity result of  Theorem \ref{imbert}, there exists $C=C(p,n, \gamma)>0$ such that for all $\mu\in (0, 5/8)$ there exists
 $h$ with $|h|\leq B=B(p,n, \gamma)$ such  that
	$$\underset{(x,t)\in Q_{\mu}}{\osc}\, (\bar u(x,t)-h\cdot x)\leq C(p,n, \gamma)(1+\norm{\bar  u}_{L^\infty(Q_{5/8})})\mu^{1+ \beta_1}.$$
	We pick   a $\mu_0=\mu_0(p,n, \gamma)\in (0, 5/8)$ such that 
$$\underset{(x,t)\in Q_{\mu_0}}{\osc}\, (\bar u(x,t)-h\cdot x)	\leq \frac{1}{4}\mu_0(1-\delta)^{\gamma+2}$$
for some $\delta=\delta(p,n\gamma)\in (0,1)$.
Thus there exist	two constants $\rho$ and $\delta$ depending on $p, n, \gamma$ such that
	$$\underset{(x,t)\in Q_{\rho}^{1-\delta}}{\osc}\, (\bar u(x,t)-h\cdot x)\leq \frac{1}{4} \rho (1-\delta)$$
	with $\rho=\mu_0(1-\delta)^{\gamma+1}<(1-\delta)^{\gamma+1}$.\\
	\noindent It follows from Lemma \ref{approl}, that for $\eta:=\frac{1}{4} \rho(1-\delta)$ there exists $\eps_0$ such that if  $\norm{f}_{L^\infty(Q_1)}\leq \eps_0$,  we have
	\begin{align*}
	\underset{(x,t)\in Q_{\rho}^{1-\delta}}{\osc}\, (u(x,t)-h\cdot x)
	&\leq \underset{(x, t)\in Q_{\mu_0}}{\osc}\, (u(x,t)-\bar u(x,t))+\underset{(x,t)\in Q_{\rho}^{1-\delta}}{\osc}\, (\bar u(x,t)-h\cdot x)\\
	&\leq \eta+\frac{1}{2} \rho(1-\delta)
	\leq  \rho(1-\delta).
	\end{align*}
	The choice of $\eta$ determines the smallness of $f$.
	\end{proof}

		\subsection{ Iteration and condition for alternatives}
		Here we study the situation when the
Degenerate  Alternative holds a certain number of times when considering a suitable
chain of shrinking intrinsic cylinders. 
\begin{lemma}[Improvement of flatness]\label{iterimp}
Let $-1<\gamma<\infty$ and $1<p<\infty$. Let $u$ be a viscosity solution to \eqref{maineq} such that $\osc_ {Q_1} u\leq 1$.   Assume  that $\norm{f}_{L^\infty(Q_1)} \leq \eps_0$, where   $\eps_0=\eps_0(p,n, \gamma)>0$, is the constant appearing in Lemma \ref{starting}. Then there exist  $\rho=\rho(p,n, \gamma)>0$  and $\delta=\delta(p,n, \gamma)\in (0,1)$ with $\rho<(1-\delta)^{\gamma+1}$ such that, if  for every nonnegative integer $k$ it holds
 \begin{equation}\label{conditionimpro}
\quad\left\{\begin{array}{ll}
\exists\, \, l_i \quad\text{with}\quad |l_i|\leq C_2(1-\delta)^i\quad \text{such that}\,\,\underset{(x,t)\in Q_{\rho^i}^{(1-\delta)^i}}{\osc}\, (u(x,t)-l_i\cdot x)\leq \rho^i(1-\delta)^i\\
 \text{for}\quad i=0,\ldots , k,
\end{array}
\right.
\end{equation}
then there exists a vector $l_{k+1}$ such that 
$$\underset{(x,t)\in Q_{\rho^{k+1}}^{(1-\delta)^{k+1}}}{\osc}\, (u(x,t)-l_{k+1}\cdot x)\leq \rho^{k+1}(1-\delta)^{k+1}$$
and $$|l_{k+1}-l_k|\leq C_3(1-\delta)^k,$$
with $C_3=C_3(p,n, \gamma)>0$.

\end{lemma}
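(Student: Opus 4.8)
The plan is to turn the chain hypothesis \eqref{conditionimpro} into a single application of the improvement of flatness of Lemma \ref{starting}, by rescaling $u$ at the $k$-th intrinsic scale. No genuine induction is involved: for the given $k$ only the $i=k$ instance of \eqref{conditionimpro} is used to build $l_{k+1}$, and the weaker instances $i<k$ are carried in the statement merely because they are the data handed forward when this lemma is iterated in Section \ref{sect5}. Throughout I will write $r_k:=\rho^k$ and $\lambda_k:=(1-\delta)^k$.

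First I would introduce the normalized function
\[
\bar w_k(y,s):=\frac{u(r_k y,\,r_k^{2}\lambda_k^{-\gamma}s)}{r_k\lambda_k}\qquad\text{on }Q_1,
\]
and check, by the direct computation forced by the scaling of \eqref{maineq} (the key algebraic identity being $r_k^{2}\lambda_k^{-\gamma}=r_k^{\gamma+2}/(r_k\lambda_k)^{\gamma}$), that $\bar w_k$ is a viscosity solution of \eqref{maineq} in $Q_1$ with source $\tilde f_k(y,s):=\frac{r_k}{\lambda_k^{\gamma+1}}f(r_k y,\,r_k^{2}\lambda_k^{-\gamma}s)$. Since Lemma \ref{starting} delivers $\rho,\delta$ with $\rho<(1-\delta)^{\gamma+1}$, one gets $\norm{\tilde f_k}_{L^\infty(Q_1)}\le(\rho(1-\delta)^{-(\gamma+1)})^{k}\norm{f}_{L^\infty(Q_1)}\le\norm{f}_{L^\infty(Q_1)}\le\eps_0$; and because $\gamma>-2$ one has $r_k^{2}\lambda_k^{-\gamma}=(\rho^{2}(1-\delta)^{-\gamma})^{k}\le1$, so $Q_{r_k}^{\lambda_k}\subset Q_1$ and $\bar w_k$ is well defined. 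The crucial bookkeeping is the oscillation bound: using the $i=k$ instance of \eqref{conditionimpro} together with $|l_k|\le C_2\lambda_k$,
\[
\osc_{Q_1}\bar w_k=\frac{1}{r_k\lambda_k}\osc_{Q_{r_k}^{\lambda_k}}u\le\frac{1}{r_k\lambda_k}\big(\osc_{Q_{r_k}^{\lambda_k}}(u-l_k\cdot x)+2r_k|l_k|\big)\le 1+2C_2=A_1,
\]
which is precisely why $A_1$ was fixed equal to $1+2C_2$.

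Then I would apply Lemma \ref{starting} to $\bar w_k$ to obtain a vector $h$ with $|h|\le B=B(p,n,\gamma)$ and $\osc_{Q_\rho^{1-\delta}}(\bar w_k(y,s)-h\cdot y)\le\rho(1-\delta)$, set $l_{k+1}:=\lambda_k h$, and undo the scaling: the change of variables $x=r_k y$, $t=r_k^{2}\lambda_k^{-\gamma}s$ carries $Q_{\rho^{k+1}}^{(1-\delta)^{k+1}}$ onto $Q_\rho^{1-\delta}$ and gives $u(x,t)-l_{k+1}\cdot x=r_k\lambda_k(\bar w_k(y,s)-h\cdot y)$, so that
\[
\osc_{Q_{\rho^{k+1}}^{(1-\delta)^{k+1}}}\big(u(x,t)-l_{k+1}\cdot x\big)\le r_k\lambda_k\,\rho(1-\delta)=\rho^{k+1}(1-\delta)^{k+1},
\]
while $|l_{k+1}-l_k|\le\lambda_k|h|+|l_k|\le(B+C_2)\lambda_k$, which is the claim with $C_3:=B+C_2$.

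The hard part is not in this lemma but is packaged inside Lemma \ref{starting}: the fact that $\rho,\delta,B,\eps_0$ are absolute (depending only on $p,n,\gamma$), which comes from the approximation Lemma \ref{approl} — a compactness/contradiction argument resting on the estimates of Section \ref{sect3} — combined with the interior $C^{1+\beta_1,(1+\beta_1)/2}$ regularity of the homogeneous equation from Theorem \ref{imbert}, which yields a plane approximating the limit at a fixed small scale $\mu_0=\mu_0(p,n,\gamma)$ and transported back at the intrinsic scale $\rho=\mu_0(1-\delta)^{\gamma+1}$. The one delicate point proper to the present lemma is that this absolute improvement is usable only while $\osc_{Q_1}\bar w_k\le A_1$, i.e. while $|l_i|\le C_2\lambda_i$; this is exactly why the Degenerate Alternative must propagate that bound, and it is exactly where the iteration may stop — when $|l_{k_0}|>C_2\lambda_{k_0}$, equivalently when the slope $q=l_{k_0}/\lambda_{k_0}$ of the deviation $w_{k_0}=\bar w_{k_0}-q\cdot y$, which solves \eqref{devia}, becomes large, at which point the $q$-uniform Lipschitz bound of Lemma \ref{Liphom1} is what forces $|Du|$ away from $0$ in Section \ref{sect5}.
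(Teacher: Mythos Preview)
Your proposal is correct and follows essentially the same route as the paper: define the rescaled function (the paper calls it $\bar v$, obtained by first forming the deviation $w_k$ and then adding back $q\cdot x$, but this is exactly your $\bar w_k$), check it solves \eqref{maineq} in $Q_1$ with source of size $\le\eps_0$ and oscillation $\le 1+2C_2=A_1$, apply Lemma \ref{starting}, and set $l_{k+1}=(1-\delta)^k h$ with $C_3=B+C_2$. Your remark that only the $i=k$ instance of \eqref{conditionimpro} is actually used is accurate; the paper phrases the argument as an induction but the inductive step likewise only invokes that single instance.
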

\begin{proof}
Let $\rho, \delta, B$ and $\eps_0$ be the constants coming from Lemma \ref{starting} and let 
$$C_3:=B+ C_2.$$
For $j=0$ we take  $l_0=0$, and  the result follows from Lemma \ref{starting},  since  $\osc_{Q_1} u\leq 1<A_1$. Suppose that the result of the Lemma holds true for $j= 0, \ldots , k $. We are going to  prove it for $ j= k+1$. Let
$$w_k(x,t):=\frac{u(\rho^k x, \rho^{2k}(1-\delta)^{-k\gamma} t)-l_k\cdot \rho^k x}{\rho^k(1-\delta)^k},$$
and denote $\bar f(x,t):=\rho^k(1-\delta)^{-k(\gamma+1)} f(\rho^k x, \rho^{2k}(1-\delta)^{-k\gamma} t)$.
By assumption,  we have that $\osc_{Q_1} w_k\leq 1$ and $|l_k|\leq C_2 (1-\delta)^k$. 
Let $q=\frac{l_k}{(1-\delta)^k}$. Now the function $\bar v(x,t):= w_k(x,t)+q\cdot x$   satisfies 
$$\osc_{Q_1} \bar v\leq 1+2|q|\leq 1+2C_2\leq A_1.$$
Moreover,  $\bar v$ solves in $Q_1$
$$\partial_t\bar v=|D\bar v|^\gamma\Delta_p^N\bar v+\bar f,$$
with 
$$\norm{\bar f}_{L^\infty(Q_1)}\leq \eps_0<1.$$
Here we used   that $\rho(1-\delta)^{-(\gamma+1)}<1$. Therefore, by Lemma \ref{starting} we have that there exists $h$ with $|h|\leq B=B(p,n, \gamma)$ such that 
$$\underset{(x,t)\in Q_{\rho}^{(1-\delta)}}{\osc}\, (\bar v(x,t)-h\cdot x)\leq \rho(1-\delta).$$
Going back to $u$, we have
	$$\underset{(x,t)\in Q_{\rho}^{(1-\delta)}}{\osc}\, (u(\rho^k x, \rho^{2k}(1-\delta)^{-k\gamma} t)-\rho^k(1-\delta)^k h \cdot x)\leq \rho^{k+1}(1-\delta)^{k+1}.$$
	Scaling back, we get that 
	$$\underset{(x,t)\in Q_{\rho^{k+1}}^{(1-\delta)^{k+1}}}{\osc}\, (u(x,t)-l_{k+1}\cdot x)\leq \rho^{k+1}(1-\delta)^{k+1},$$
	where 
	$$l_{k+1}:= (1-\delta)^k h$$
	satisfies 
	$$|l_{k+1}-l_k|\leq( B+C_2)(1-\delta)^k= C_3(1-\delta)^k.$$
\end{proof}

	\section{Handling the two alternatives  and proof of the main theorem}\label{sect5}
In this section, we assume that $\gamma\geq 0$. We prove the Hölder continuity of  $Du$  at the
origin and   the improved Hölder regularity of $u$ with respect to the time variable. Then the result follows by standard translation and scaling  arguments.  The H\"older regularity with respect to the space variable  is a direct consequence of  the following lemma  after scaling back from $u_\theta$  to $u$. 
	
\begin{theorem}

Let $0\leq \gamma<\infty$, $1<p<\infty$ and let $u$ be a viscosity solution to \eqref{maineq} with $\underset{Q_1}{\osc}\, u\leq 1$. Let $\eps_0$ be the constant coming from Lemma \ref{starting} and assume that $\norm{f}_{L^\infty(Q_1)}\leq \eps_0$. Then there exist $\alpha=\alpha(p,n, \gamma)\in \left(0, \frac{1}{\gamma+1}\right)$ and $C=C(p,n, \gamma)>0$ such that
$$|Du(x,t)-Du(y,s)|\leq C(|x-y|^\alpha+|t-s|^{\alpha/2})$$
and 
$$|u(x,t)-u(x,s)|\leq C|t-s|^{(1+\alpha)/2}.$$

\end{theorem}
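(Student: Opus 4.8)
The strategy is to combine the iterated improvement of flatness from Lemma \ref{iterimp} with the two-alternative dichotomy described in the introduction, and then translate the resulting decay of flatness into pointwise $C^{1+\alpha,(1+\alpha)/2}$ estimates at the origin. First I would fix $\rho,\delta,\eps_0,C_2,C_3$ as in Lemmas \ref{starting}--\ref{iterimp}, and apply Lemma \ref{iterimp} inductively as long as the smallness constraint $|l_i|\le C_2(1-\delta)^i$ keeps holding. Since each step adds at most $C_3(1-\delta)^k$ to the slope, the partial sums $\sum_{j<k}C_3(1-\delta)^j$ stay bounded by $C_3/\delta$; choosing $C_2$ larger than this geometric bound (this is exactly why the constants are fixed the way they are in Section \ref{sect3}) ensures the hypothesis propagates and we are in the \textbf{Degenerate Alternative}: for every $k$ there is $l_k$ with $|l_k|\le C_2(1-\delta)^k$ and
$$\osc_{Q_{\rho^k}^{(1-\delta)^k}}\bigl(u(x,t)-l_k\cdot x\bigr)\le \rho^k(1-\delta)^k.$$
Otherwise the iteration stops at some $k_0$ and we are in the \textbf{Smooth Alternative}.

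In the Degenerate Alternative, the vectors $l_k$ form a Cauchy sequence (by $|l_{k+1}-l_k|\le C_3(1-\delta)^k$), so $l_k\to l_\infty$; since $|l_k|\le C_2(1-\delta)^k\to 0$ we get $l_\infty=0$, i.e. $Du(0,0)=0$. The flatness decay $\osc_{Q_{\rho^k}^{(1-\delta)^k}}(u-l_k\cdot x)\le \rho^k(1-\delta)^k$ together with $|l_k|\le C_2(1-\delta)^k$ gives, for $(x,t)\in Q_{\rho^k}^{(1-\delta)^k}$, that $|u(x,t)-u(0,0)|\le \rho^k(1-\delta)^k+C_2(1-\delta)^k\rho^k$; interpolating between consecutive scales (a point at spatial distance $r\sim\rho^k$ and time distance $\sim\rho^{2k}(1-\delta)^{-k\gamma}$) yields a pointwise estimate of the form $|u(x,t)-l_k\cdot x|\le C r^{1+\alpha}$ where $\alpha$ is determined by $\rho=(1-\delta)^{\gamma+1}\cdot\mu_0$, namely $(1-\delta)=\rho^{\alpha/(1+\alpha)}$-type balancing; one reads off $\alpha\in(0,1/(1+\gamma))$ from $\rho<(1-\delta)^{\gamma+1}$. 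By the standard characterization of pointwise $C^{1+\alpha,(1+\alpha)/2}$ regularity via approximation by planes (\cite[Lemma 12.12]{Lieb96}), this gives differentiability of $u$ at $(0,0)$ with $Du(0,0)=0$ and the claimed modulus. The same argument applies after translating the origin to any point where the Degenerate Alternative is in force, which produces the Hölder bound on $Du$ and the $|t-s|^{(1+\alpha)/2}$ bound on $u$; the time-regularity exponent $(1+\alpha)/2$ comes from combining the spatial flatness with the intrinsic scaling $t\sim r^2(1-\delta)^{-k\gamma}$ and Lemma \ref{regutime1}.

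In the Smooth Alternative, the iteration stops at $k_0$, meaning $|l_{k_0}|\ge C(1-\delta)^{k_0}$ while the previous steps held. Rescaling to $w_{k_0-1}$ (the function in \eqref{introdev} with $q=l_{k_0-1}/(1-\delta)^{k_0-1}$), the uniform-in-$q$ Lipschitz estimate of Lemma \ref{Liphom1} controls $\osc w_{k_0-1}$, hence $|Du|$ does not oscillate too much on the corresponding intrinsic cylinder; combined with $|l_{k_0}|$ being bounded below, this forces $|Du|\ge c>0$ on a fixed smaller cylinder. There the equation \eqref{maineq} becomes uniformly parabolic with smooth (indeed $C^\infty$ away from the origin in the gradient variable) coefficients $a_{im}(z)=|z|^\gamma(\delta_{im}+(p-2)z_iz_m/|z|^2)$, whose ellipticity constants and derivative bounds depend only on the lower and upper bounds for $|Du|$, which in turn depend only on $p,n,\gamma$ via the fixed constants. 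Theorem \ref{ladyhope} (with the approximation remark to pass from viscosity to strong solutions) then yields $[Du]_{C^{\bar\alpha,\bar\alpha/2}}\le \bar C_0$ on an interior subcylinder, with $\bar\alpha,\bar C_0$ depending only on $p,n,\gamma$; scaling back to $u$ and taking $\alpha=\min(\bar\alpha,\alpha_{\mathrm{deg}})$ gives the conclusion, and the time estimate follows as before. \textbf{The main obstacle} I expect is the bookkeeping that makes the two alternatives genuinely exhaustive and keeps all constants depending only on $p,n,\gamma$: one must verify that the choice $C_2=2C_1$, $A_1=1+2C_2$ is compatible both with the propagation of \eqref{conditionimpro} (needing $C_2\ge C_3/\delta$-type bounds, possibly requiring $\delta$ close to $1$) and with the oscillation budget $1+2C_2\le A_1$ used inside Lemma \ref{iterimp}, and — in the Smooth Alternative — that the quantitative lower bound on $|Du|$ extracted from $|l_{k_0}|\ge C(1-\delta)^{k_0}$ and Lemma \ref{Liphom1} is on a cylinder large enough (after undoing the intrinsic scaling, which dilates time by $(1-\delta)^{-k_0\gamma}$) for Theorem \ref{ladyhope} to apply with interior distance bounded below independently of $k_0$.
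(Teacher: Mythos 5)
Your overall strategy is the same as the paper's: let $k$ be the first scale at which \eqref{conditionimpro} fails; if $k=\infty$, read off $C^{1+\alpha,(1+\alpha)/2}$ regularity at the origin from the plane approximations with $\alpha=\min(1,\log(1-\delta)/\log\rho)$; if $k<\infty$, rescale intrinsically, use the uniform-in-$q$ Lipschitz bound of Lemma \ref{Liphom1} together with the lower bound on $|l_k|$ to force $|Dv|\ge C_1>0$, apply Theorem \ref{ladyhope}, scale back and take $\alpha=\min(\bar\alpha,\log(1-\delta)/\log\rho)$. That matches Section \ref{sect5}.

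There is, however, a genuine error in your first paragraph. The telescoping bound $|l_k|\le\sum_{j<k}C_3(1-\delta)^j\le C_3/\delta$ only bounds $|l_k|$ by a constant; it does not, for any choice of $C_2$, yield the geometrically decaying bound $|l_k|\le C_2(1-\delta)^k$ that is needed to apply Lemma \ref{iterimp} at the next step. If choosing $C_2$ large really "ensured the hypothesis propagates", the Degenerate Alternative would hold at every point and $Du$ would vanish identically, which is absurd. No such propagation argument is needed: the dichotomy is tautological (either the condition holds for all $k$, or it fails first at a finite $k$), and the constants of Section \ref{sect3} are fixed for a different purpose — $C_2=2C_1$ guarantees in the Smooth Alternative that $|Dv|\ge|q|-|Dw|\ge 2C_1-C_1=C_1>0$ on $Q_{3/4}$ via Lemma \ref{Liphom1}, and $A_1=1+2C_2$ is the oscillation budget that allows Lemma \ref{starting} to be applied to $\bar v=w_k+q\cdot x$ inside the induction. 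Consequently your "main obstacle" about needing $C_2\ge C_3/\delta$ or $\delta$ close to $1$ is a non-issue. A second, smaller slip: Lemma \ref{regutime1} only gives the time exponent $\min\left(\frac12,\frac{1}{2+\gamma}\right)$, so it cannot produce the $(1+\alpha)/2$ estimate; in the paper that estimate comes from the intrinsic flatness decay on the cylinders $Q^{(1-\delta)^k}_{\rho^k}$ (whose time length is $\rho^{2k}(1-\delta)^{-k\gamma}$) and, in the Smooth Alternative, from scaling back the $C^{1+\bar\alpha,(1+\bar\alpha)/2}$ estimate of $v$. With these corrections your argument coincides with the paper's proof.
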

\begin{proof}
Let $\rho$ and $\delta$  be the constants coming from Lemma \ref{starting}.
Let $k$ be  the minimum nonnegative integer such that
the condition \ref{conditionimpro} does not hold.
We can conclude from Lemma \ref{iterimp} that for any vector  $\xi$ with $|\xi|\leq C_2(1-\delta)^k$, it holds 
\begin{equation}
|u(t,x)-\xi\cdot x|\leq C(|x|^{1+\tau}+|t|^{\frac{1+\tau}{2-\tau\gamma}})\quad \text{for}\quad (x,t)\in Q_1\setminus Q_{\rho^{k+1}}^{(1-\delta)^{k+1}},
\end{equation}
	where $\tau:=\frac{\log(1-\delta)}{\log(\rho)}$
and $C=\frac{1+C_2+ C_3(1-\delta)^{-1}}{\rho(1-\delta)}$.
Now we treat differently the following two cases.\\

\noindent {\bf First case:} If $k=\infty$, then the regularity result holds with $$\alpha=\min(1,\tau)=\min\left(1,\dfrac{\log(1-\delta)}{\log \rho}\right)\in \left(0, \min\left(1,\frac{1}{1+\gamma}\right)\right).$$
Indeed, for all $k\in\N$, there exists $l_k\in \R^n$  with $|l_k|\leq C_2 (1-\delta)^k$ such that
		\begin{equation*}
		\underset{(y, t)\in Q^{(1-\delta)^k}_{\rho^{k}}}{\osc} \, (u(y,t)-l_{k}\cdot y)\leq  \rho^k(1-\delta)^k,
		\end{equation*}
	and the conclusion follows  using the characterization of functions with Hölder continuous gradient \cite{Lieb96}.\\
	
\noindent {\bf Second  case:} If $k<\infty$, then it follows from Lemma 4.2 that for
all $i =  0, \ldots, k$, we have  the existence of vectors $l_i$ such that 

\begin{equation}\label{iteratio2}
		\underset{(y, t)\in Q^{(1-\delta)^i}_{\rho^{i}}}{\osc} \, (u(y,t)-l_{i}\cdot y)\leq  \rho^i(1-\delta)^i,
		\end{equation}
		with 
		\begin{align*}
		&|l_{i}|\leq C_2 (1-\delta)^{i}\qquad\text{for}\,\, i=0, \ldots,k-1,\\
		&|l_k-l_{k-1}|\leq C_3(1-\delta)^{k-1},\\
		&|l_k|\geq C_2(1-\delta)^k.
		\end{align*}
		It follows that
\begin{equation}\label{geremy}
		\underset{(y, t)\in Q^{(1-\delta)^k}_{\rho^k}}{\osc} \, (u(y,t)-l_{k}\cdot y)\leq  \rho^k (1-\delta)^k
		\end{equation}
		and 
		\begin{equation}\label{eric}
		C_2(1-\delta)^k\leq |l_k|\leq (C_3+C_2)(1-\delta)^{k-1}.
		\end{equation}
Consider   for $(x,t)\in Q_1$ the function
		$$v(x,t):=\frac{u(\rho^k x, \rho^{2k}(1-\delta)^{-k\gamma} t)}{\rho^k(1-\delta)^k}.$$
 Then $v$ satisfies  
 $$\partial_t v=|Dv|^\gamma\Delta_p^N v+ \bar f,$$
 where 
 $$\bar f(x, t):= \rho^k(1-\delta)^{-k(1+\gamma)}f(\rho^k x, \rho^{2k}(1-\delta)^{-k\gamma} t).$$
 Notice that  we can write $$v(x,t)=v(x,t)-q\cdot x+q\cdot x:=w(x,t)+q\cdot x,$$
 where 
 $q:=\frac{l_k}{(1-\delta)^k}$ and 
 $$w(x,t):=\frac{u(\rho^k x, \rho^{2k}(1-\delta)^{-k\gamma} t)-l_k\cdot \rho^k x}{\rho^k(1-\delta)^k}.$$
Observe that  by assumption, $w$  satisfies  $\underset{Q_1}{\osc}\, w\leq 1$ and  solves in $Q_1$

$$\partial_t w-|Dw+q|^\gamma\left[\Delta w+(p-2) \left\langle D^2w\frac{Dw+q}{\abs{Dw+q}}, \frac{Dw+q}{\abs{Dw+q}}\right\rangle\right]=\bar f$$
where, due to   $\rho<(1-\delta)^{\gamma+1}$, we have    $\norm{\bar f}_{L^\infty(Q_1)}\leq \eps_0\leq 1$.
From Lemma \ref{Liphom1}, we have a uniform Lipschitz bound for $w$:
 $$|Dw(x,t)|\leq C_1 \quad\text{for}\quad (x, t)\in Q_{3/4}.$$
It follows that  for  $(x,t) \in Q_{3/4}$, we have
 \begin{equation}\label{inegal1}
 |Dv(x,t)|=|Dw(x,t)+q|\geq |q|- |Dw(x,t)|\geq C_1,
 \end{equation}
 where we used that  from \eqref{eric}  we have $|q|= \left|\frac{l_k}{(1-\delta)^k}\right|\geq C_2=2C_1$.
 Moreover, using the upper bound on $|l_k|$ coming from \eqref{eric}, we have
 \begin{align}\label{boun}\norm{v}_{L^\infty(Q_1)}&=\norm{w+\frac{l_k}{(1-\delta)^k}\cdot x}_{L^\infty(Q_1)}\nonumber\\
 &\leq \norm{w}_{L^\infty(Q_1)}+\left|\frac{l_{k}}{(1-\delta)^k}\right|\nonumber\\
 &\leq 2+ (C_3+C_2)(1-\delta)^{-1}:= C_4(p,n, \gamma).
 \end{align}
  From the local Lipschitz estimate coming from Lemma \ref{gort} and the estimate \eqref{boun}, it follows that 
  \begin{equation}\label{inegal2}\norm{Dv}_{L^\infty(Q_{3/4})}\leq C(p,n, \gamma)\left(1+\norm{v}_{L^\infty(Q_1)}+\norm{\bar f}_{L^\infty(Q_1)}\right)\leq C_5(p,n, \gamma).
  \end{equation}
  
  Combining the estimates  \eqref{inegal1} and \eqref{inegal2}, we notice  that $v$ solves a uniformly parabolic equation with ellipticity constants depending only on $p,n, \gamma$ and that this equation is smooth in the gradient variables.
  It follows that $v$ has a Hölder continuous gradient with a Hölder norm  and a Hölder exponent depending only on $p,n, \gamma$.
  Indeed, from Theorem \ref{ladyhope}, we get that $v\in C^{1+\bar \alpha, (1+\bar\alpha)/2}_{loc}(Q_{3/4})$ for some $\bar \alpha = \bar \alpha(p,n,\gamma)>0$ and 
$$\norm{Dv}_{C^{\bar\alpha}(\omega)}\leq C\left(p,n, \gamma, \dist(\omega, \partial_pQ_{3/4})\right)$$
for any $\omega\subset \subset Q_{3/4}$.
Let  $ 0<\alpha\leq\min\left(\bar\alpha, \frac{\log(1-\delta)}{\log(\rho)}\right)$.
Hence,  there exists a vector $l\in \R^n$ such that  in $Q_\rho^{1-\delta}$ we have
$$|Dv(x, t)-l|\leq C(|x|^{ \alpha}+|t|^{\frac\alpha 2})$$
and 
$$|v(x,t)-v(x,0)|\leq C|t|^{\frac{1+\alpha}{2}}.$$
Coming back to  $u$, it follows that in $Q_{\rho^{k+1}}^{(1-\delta)^{k+1}}$, it holds
\begin{align*}|Du(y,s)-(1-\delta)^k l|&\leq C(\rho^{-k \alpha}(1-\delta)^k|y|^{ \alpha}+(1-\delta)^k(\rho^{-2}(1-\delta)^{\gamma})^{\frac{k \alpha}{2}}|s|^{\frac \alpha2})\\
&\leq C(|y|^{\alpha}+|s|^{\frac{\alpha}{2}})
\end{align*}
and 
\begin{align}\label{bernard}|u(y,s)-u(y,0)|&\leq C\rho^k(1-\delta)^k |s|^{\frac{1+\alpha}{2}} (\rho^{-2}(1-\delta)^\gamma)^{\frac{k(1+\alpha)}{2}}\nonumber\\
&\leq C|s|^{\frac{1+\alpha}{2}}.
\end{align}
Here we used that $\rho^{-\alpha}(1-\delta)\leq 1$ due to  $0<\alpha\leq \frac{\log{1-\delta)}}{\log{\rho}}$.

\noindent Consequently, combining these estimates with \eqref{iteratio2}, we have showed that  for $$0<\alpha=\min \left(\bar \alpha, \frac{\log(1-\delta)}{\log(\rho)}\right),$$ there exists a constant $C=C(p,n, \gamma)$ such that, for any $r\leq \frac{1}{2}$, there exists a vector $V=V(r)$ such that
\begin{equation}\label{bousa}|u(x,t)-u(0,0)-V\cdot x|\leq Cr^{1+\alpha}
\end{equation}
whenever $|x|+\sqrt{|t|}\leq r$.
The regularity of $Du$ follows then from \cite[Lemma 12.12]{Lieb96}.

The Hölder regularity  of $u$ in time  follows from \eqref{iteratio2},\eqref{geremy},\eqref{eric} and  \eqref{bernard}. Indeed,  for $i=0,\ldots k$, we have 

\begin{align}\label{yohan}
\underset{(y, t)\in Q^{(1-\delta)^i}_{\rho^{i}}}{\osc} \, (u(y,t)-u(0,0))&\leq  \underset{(y, t)\in Q^{(1-\delta)^i}_{\rho^{i}}}{\osc} \, (u(y,t)-l_{i}\cdot y)+\underset{(y, t)\in Q^{(1-\delta)^i}_{\rho^{i}}}{\osc} \, l_i\cdot y\nonumber\\
&\leq  \rho^i(1-\delta)^i+2|l_i|\rho^i\\
&\leq \big[1+2(C_2+C_3)(1-\delta)^{-1}\big]\rho^i(1-\delta)^i\nonumber\\
&\leq C(p,n, \gamma)\rho^i(1-\delta)^i.\nonumber
\end{align}
Combining the estimate \eqref{yohan} with \eqref{bernard}, we obtain that for $-1/4\leq t\leq 0$, it holds
\begin{equation*}
|u(0,t)-u(0,0)|\leq C(p,n, \gamma)|t|^{\frac{1+\alpha}{2}}.
\end{equation*}

  \end{proof}

\begin{remark}
 One could  use another alternative to improve the Hölder regularity in time  knowing the  Hölder regularity of the spatial derivative, based on  a parabolic maximum principle and barriers.  Indeed, one  can  modify the arguments  used in the proof of Lemma \ref{regutime1} or use the  argument in \cite[proof of Theorem 4.5]{IJS17}. For related works, we refer to \cite{Ar15, IJS17}. It would be useful to know when it is possible to provide the local  regularity in time without using the regularity in space (and without assuming much regularity on the initial data) and how  it would then imply the higher regularity in space. This was done for the Cauchy problem for a class of parabolic equations in  \cite{bourg}.
\end{remark}

\section{Proofs of the local  Hölder and Lipschitz  regularity for solutions to \eqref{maineq}}\label{sect6}
In this section we provide Hölder and Lipschitz estimates for viscosity solutions to \eqref{maineq}.
These estimates are valid for the degenerate and the  singular case. We assume that $-1<\gamma<\infty$ and $1<p<\infty$.
The proof follows roughly the same lines as the one in \cite{AP17,ARP,dem11,IJS17}. We aim at proving that the maximum
$$\underset{(x,t)\in Q_r}{\max}\, (u(x,t)-u(y,t)-\vp(|x-y|))$$
is non-negative, choosing in a first step $\vp(s) = Ls^\beta$ with $\beta \in (0, 1)$, to obtain a Hölder bound, and  in a second step, $\vp(s) = L(s -\kappa_0s^{\nu})$, to improve the Hölder bound into a Lipschitz
one. To do this, we argue by contradiction and  we use in a crucial way the strict concave behavior of $\vp$ near 0 to take
profit of the strict ellipticity of the equation as usual in Ishii-Lions' method.
We first show  $C_x^{0, \beta}$ estimates for all $\beta\in(0,1)$, then we check that  this implies the  Lipschitz continuity estimates. 
\subsection{Local Hölder estimates}
\begin{lemma}\label{holdnormsol}
Let $-1<\gamma<\infty$ and $1<p<\infty$.
		Let $u $ be a bounded viscosity solution to equation \eqref{maineq}. For any $\beta\in (0,1)$, there exists a constant $C=C(p,n,\gamma, \beta)>0$ such that   for all  $x,y\in B_{15/16}$ and $t \in (-(15/16)^2,0]$, it holds
		\begin{equation}
		\begin{split}
		\abs{u(x,t)-u(y,t)}\leq C \left(\norm{ u}_{L^\infty(Q_1)}+\norm{ u}_{L^\infty(Q_1)}^{\frac{1}{1+\gamma}}+\norm{ f}_{L^\infty(Q_1)}^{\frac{1}{1+\gamma}}\right) \abs{x-y}^\beta.
		\end{split}
		\end{equation}
		
	\end{lemma}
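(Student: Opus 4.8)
The plan is to follow the classical Ishii--Lions doubling-of-variables argument in the parabolic setting. Fix $\beta\in(0,1)$ and a point $t_0\in(-(15/16)^2,0]$; by translation we may take $t_0=0$. Set
$$
M:=\sup_{x,y\in \overline{B_{15/16}},\ t\in(-(15/16)^2,0]}\Big(u(x,t)-u(y,t)-L|x-y|^\beta-\tfrac{\sigma}{T-t}-\Gamma\big(|x-x_0|^2+|y-x_0|^2\big)\Big),
$$
where $L$ is a large constant to be chosen (of the indicated size), $\sigma>0$ is an auxiliary penalization to be sent to $0$ at the end, $\Gamma$ is a localization coefficient forcing the maximum to stay away from the lateral boundary of $B_{15/16}$, and $T$ is chosen so that the cylinder sits inside $Q_1$. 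I would argue by contradiction: assume $M>0$, so that the supremum is attained at an interior point $(\bar x,\bar t)$, $(\bar y,\bar t)$ with $\bar x\neq \bar y$. The first routine step is to use the boundedness of $u$ to get $L|\bar x-\bar y|^\beta\le 2\norm{u}_{L^\infty(Q_1)}$, hence $|\bar x-\bar y|$ is small (controlled by $(\norm{u}_{L^\infty}/L)^{1/\beta}$), and to show, via the standard trick of comparing the values at $(\bar x,\bar x,\cdot)$ and $(\bar y,\bar y,\cdot)$, that the localization terms $\Gamma|\bar x-x_0|^2,\ \Gamma|\bar y-x_0|^2$ are negligible provided $\Gamma$ is chosen appropriately relative to $L$; this keeps $\bar x,\bar y$ well inside $B_{15/16}$ so the interior equation applies.

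The core step is then to invoke the parabolic theorem on sums (Jensen--Ishii / Crandall--Ishii lemma for viscosity solutions — this is where the definition of viscosity solution via $C^2$ test functions with nonvanishing gradient is used; the "second alternative" in the definition of sub/supersolution, concerning purely time-dependent test functions, does not occur here because the doubling penalty has nonzero spatial gradient once $\bar x\ne\bar y$). This produces numbers $a,b\in\R$ with $a-b=\partial_t\big(\sigma/(T-t)\big)\ge 0$ and symmetric matrices $X,Y$ such that
$$
(a,\ D\phi(\bar x),\ X)\in \overline{\mathcal P}^{2,+}u(\bar x,\bar t),\qquad
(b,\ -D_y\phi(\bar y),\ -Y)\in \overline{\mathcal P}^{2,-}u(\bar y,\bar t),
$$
where $\phi$ is the spatial test function $L|x-y|^\beta+\Gamma(|x-x_0|^2+|y-x_0|^2)$, together with the matrix inequality $\begin{pmatrix}X&0\\0&Y\end{pmatrix}\le D^2\phi + (\text{error})$. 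The gradients of $u$ at the two points are then both comparable to $L|\bar x-\bar y|^{\beta-1}$, which is \emph{large} (this is the key quantitative gain from the concavity/singularity of $s\mapsto s^\beta$ at $0$); in particular they are bounded away from $0$, so the equation \eqref{maineq} may be evaluated classically at both points. Subtracting the two viscosity inequalities and using $a-b\ge0$ gives
$$
0\ \le\ a-b\ \le\ |p_x|^\gamma\,\mathrm{tr}\!\big(A(e_x)X\big)\ -\ |p_y|^\gamma\,\mathrm{tr}\!\big(A(e_y)Y\big)\ +\ 2\norm{f}_{L^\infty(Q_1)},
$$
where $A(e)=I+(p-2)e\otimes e$ with $e_x,e_y$ the unit directions of the gradients, and $p_x,p_y\approx L|\bar x-\bar y|^{\beta-1}\beta$. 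The standard computation — feeding the test direction $\xi=(\bar x-\bar y)/|\bar x-\bar y|$ into the matrix inequality — shows $\mathrm{tr}(A(e_x)X)-\mathrm{tr}(A(e_y)Y)\le -c\,L\beta(1-\beta)|\bar x-\bar y|^{\beta-2}+C\Gamma$ for a structural constant $c=c(n,p)>0$ (ellipticity constants of $A$ are $\min(1,p-1)$ and $\max(1,p-1)$, both independent of the direction). Since $|p_x|^\gamma,|p_y|^\gamma\approx (L\beta)^\gamma|\bar x-\bar y|^{\gamma(\beta-1)}$ are comparable to each other, the negative term dominates: the right-hand side is bounded above by
$$
-c'\,(L\beta)^{\gamma+1}(1-\beta)\,|\bar x-\bar y|^{(\gamma+1)(\beta-1)-1}\ +\ C(L,\Gamma,\gamma,p,n)\,|\bar x-\bar y|^{\gamma(\beta-1)}\ +\ 2\norm{f}_{L^\infty}.
$$
Because $(\gamma+1)(\beta-1)-1 < \gamma(\beta-1)$, for $|\bar x-\bar y|$ small the first term beats the second; and choosing
$$
L\ \ge\ C\Big(\norm{u}_{L^\infty(Q_1)}+\norm{u}_{L^\infty(Q_1)}^{1/(1+\gamma)}+\norm{f}_{L^\infty(Q_1)}^{1/(1+\gamma)}\Big)
$$
large enough (this is exactly where that combination of norms enters — the $\norm{f}$-term must be absorbed, which costs a power $1/(1+\gamma)$ because the diffusion carries the weight $|p|^\gamma\sim L^\gamma$) makes the whole right-hand side strictly negative, contradicting $a-b\ge 0$. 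Hence $M\le 0$; letting $\sigma\to 0$ and then $\Gamma\to 0$ (or absorbing $\Gamma$ from the start into the constant) gives $u(x,0)-u(y,0)\le L|x-y|^\beta$ for all $x,y\in B_{15/16}$, and swapping $x,y$ yields the claimed estimate.

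The main obstacle is bookkeeping the two competing negative and positive terms so that the degenerate weight $|p|^\gamma$ — which is itself large and of comparable size at the two points — is handled correctly; concretely, one must verify that $\big||p_x|^\gamma-|p_y|^\gamma\big|$ is a lower-order error (using $|p_x-p_y|\lesssim \Gamma$ and $|p_x|,|p_y|\gtrsim L\beta|\bar x-\bar y|^{\beta-1}$) and that the gain from concavity survives multiplication by $|p|^\gamma$ with the exponent arithmetic $(\gamma+1)(\beta-1)-1<\gamma(\beta-1)$ going the right way for \emph{all} $\beta\in(0,1)$ and all $\gamma>-1$. Everything else — the localization, the theorem on sums, the choice of $T$ — is routine and parallels \cite{AP17,ARP,dem11,IJS17}.
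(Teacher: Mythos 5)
Your proposal is correct and follows essentially the same route as the paper's own proof: an Ishii--Lions doubling argument with penalty $L|x-y|^\beta$ plus quadratic localization, the parabolic Jensen--Ishii lemma, gradients of size $\approx L\beta|\bar x-\bar y|^{\beta-1}$ feeding the degenerate weight, the concavity of $s^\beta$ producing the dominant negative eigenvalue, and the choice $L\gtrsim \norm{u}_{L^\infty}+\norm{u}_{L^\infty}^{1/(1+\gamma)}+\norm{f}_{L^\infty}^{1/(1+\gamma)}$. The only differences are cosmetic (a $\sigma/(T-t)$ time penalization instead of $(t-t_0)^2$, and slightly loose exponents in the error terms, e.g. $C\Gamma$ should be $C\Gamma|\bar x-\bar y|^{-1}$, and for $-1<\gamma<0$ the bound on $\big||p_x|^\gamma-|p_y|^\gamma\big|$ needs the interpolation $|p_x-p_y|^\kappa(|p_x|^{\gamma-\kappa}+|p_y|^{\gamma-\kappa})$ as in the paper), none of which affect the conclusion.
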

	\begin{proof}

We fix $x_0, y_0\in B_{15/16}$, $t_0\in (-(15/16)^2,0)$. For suitable  constants $L_1, L_2>0$, we define the auxiliary function 
		\begin{align*}
		\Phi(x, y,t):&=u(x,t)-u(y,t)-L_2\vp(\abs{x-y})-\frac{L_1}{2}\abs{x-x_0}^2-
		\frac{L_1}{2}\abs{y-y_0}^2-\frac{L_1}{ 2} (t-t_0)^2,
		\end{align*}
		where $\vp(s)= s^\beta$. We want to show that $\Phi(x, y,t)\leq 0$ for $(x,y)\in \overline{B_{15/16}}\times\overline{ B_{15/16}}$ and $t\in [-(15/16)^2,0]$.
	We point out that the role of the term $-\frac{L_1}{2}\abs{x-x_0}^2-\frac{L_1}{2}\abs{y-y_0}^2-\frac{L_1}{2}  (t-t_0)^2$ 
is to localize, while the term $-L_2\vp(\abs{x-y})$
 is concerned with the Hölder continuity. The main idea of the proof relies on  the concavity of $\varphi$ to create a large negative term in the viscosity inequalities.
  We argue by contradiction.  We assume that
		$\Phi$ has a positive
		maximum at some point $(\bar x, \bar y,\bar t)\in \bar B_{15/16}\times \bar B_{15/16}\times [-(15/16)^2,0]$ and we are going to get a contradiction for $L_2$, $L_1$
large enough.
The positivity of the maximum of $\Phi$  implies that $\bar x\neq \bar y$.
		Using the boundedness of   $u$ we can choose
 $$L_1\geq \dfrac{140\osc_{Q_1}{u}}{\min(d\left( (x_0,t_0) ,\partial Q_{15/16}\right), d\left( (y_0,t_0) ,\partial Q_{15/16}\right))^2},$$ so that 
		\begin{equation*} 
\begin{split}
		\abs{\bar y-y_0}+\abs{\bar x-x_0}+|\bar t-t_0|&\leq 2\sqrt{ \frac{2\osc_{Q_1}{u}}{L_1}}\leq \dfrac{d\left( (x_0,t_0),  \partial Q_{15/16}\right)}{2},
		\end{split}
		\end{equation*}
		and hence $\bar x$ and $\bar y$ are in $B_{15/16}$ and $\bar t\in (-(15/16)^2,0)$.

The remaining of the proof is divided into  3 steps.  We write down the viscosity inequalities and  get suitable matrices inequalities from  the  Jensen-Ishii's lemma (Step 1).
Then we estimate  the difference of the obtained terms,  make use  of the concavity of $\varphi$ (Step 2) and finally  we obtain a contradiction and conclude (Step 3).\\

	\noindent {\bf Step 1.}
		By  Jensen-Ishii's lemma (see \cite[Theorem 8.3]{crandall1992user}), there exist
		\[
		\begin{split}
		&(\sigma,\bar\zeta_x,X)\in \overline{\mathcal{P}}^{2,+}\left(u(\bar x,\bar t)-\frac{L_1}{2}\abs{\bar x-x_0}^2-\frac{L_1}{2}(\bar t-t_0)^2\right),\\
		&(\sigma,  \bar \zeta_y,Y)\in \overline{ \mathcal{P}}^{2,-}\left( u(\bar y, \bar t)+\frac{L_1}{2}\abs{\bar y-y_0}^2\right),
		\end{split}
		\]
	which we can rewrite as 
		\[
		\begin{split}
		&(\sigma+L_1(\bar t-t_0), a_1,X+L_1I)\in \overline{\mathcal{P}}^{2,+} u(\bar x, \bar t),\\ &(\sigma, a_2,Y-L_1I)\in \overline{\mathcal{P}}^{2,-} u(\bar y, \bar t),
		\end{split}
		\]
		 where $\bar\zeta_x=\bar \zeta_y$ and 
		\[
		\begin{split}
		a_1&=L_2\varphi'(|\bar x-\bar y|) \frac{\bar x-\bar y}{\abs{\bar x-\bar y}}+L_1(\bar x-x_0)= \bar \zeta_x+L_1(\bar x-x_0),\\
		a_2&=L_2\varphi'(|\bar x-\bar y|) \frac{\bar x-\bar y}{\abs{\bar x-\bar y}}-L_1(\bar y-y_0)= \bar \zeta_y-L_1(\bar y-y_0).
		\end{split}
		\]
		
		Assuming that   $L_2>\dfrac{L_1 2^{4-\beta}}{\beta}$, we have
		\begin{equation}\label{portorico3}
		\begin{cases}
		2L_2\beta \abs{\bar x-\bar y}^{\beta-1}&\geq\abs{a_1}\geq L_2\varphi'(|\bar x-\bar y|) - L_1\abs{\bar x-x_0}\ge \frac{L_2}{2} \beta \abs{\bar x-\bar y}^{\beta-1}\\
		2L_2\beta\abs{\bar x-\bar y}^{\beta-1}&\geq\abs{a_2}\geq L_2\varphi'(|\bar x-\bar y|) - L_1\abs{\bar y-y_0}\ge \frac{L_2}{2} \beta \abs{\bar x-\bar y}^{\beta-1}.
		\end{cases}
		\end{equation}

		\noindent Thanks to Jensen-Ishii's lemma \cite[Theorem 12.2]{crandnote},  we can take $X, Y\in \mathcal{S}^n$ such that for any  $\tau>0$ such that $\tau Z<I$, it holds\\
		\begin{equation}\label{ma1}
		-\frac{2}{\tau} \begin{pmatrix}
		I&0\\
		0&I 
		\end{pmatrix}\leq
		\begin{pmatrix}
		X&0\\
		0&-Y 
		\end{pmatrix}\leq \begin{pmatrix} Z^\tau& -Z^\tau\\
		-Z^\tau& Z^\tau\end{pmatrix},
		\end{equation}
	
		where 
		\begin{align*}	
		Z&=L_2\varphi''(|\bar x-\bar y|) \frac{\bar x-\bar y}{\abs{\bar x-\bar y}}\otimes \frac{\bar x-\bar y}{\abs{\bar x-\bar y}} +\frac{L_2\varphi'(|\bar x-\bar y|)}{\abs{\bar x-\bar y}}\Bigg( I- \frac{\bar x-\bar y}{\abs{\bar x-\bar y}}\otimes \frac{\bar x-\bar y}{\abs{\bar x-\bar y}}\Bigg)\\
		&=L_2\beta\abs{\bar x-\bar y}^{\beta-2}\left(I+(\beta-2)\frac{\bar x-\bar y}{\abs{\bar x-\bar y}}\otimes \frac{\bar x-\bar y}{\abs{\bar x-\bar y}}\right)
		\end{align*}
		and $$Z^\tau= (I-\tau Z)^{-1}Z.$$
		
		\noindent We fix   $\tau=\dfrac{1}{2L_2\beta\abs{\bar x-\bar y}^{\beta-2}}$ so  that we have
		\begin{align*}	
		Z^\tau=(I-\tau Z)^{-1} Z=2L_2\beta\abs{\bar x-\bar y}^{\beta-2}\left(I-2\frac{2-\beta}{3-\beta} \frac{\bar x-\bar y}{\abs{\bar x-\bar y}}\otimes \frac{\bar x-\bar y}{\abs{\bar x-\bar y}}\right).
\end{align*}	
 With this choice for $\tau$, we have  for $\xi=\frac{\bar x-\bar y}{\abs{\bar x-\bar y}}$, 
	
				\begin{equation}\label{oufi11}
		\langle Z^\tau \xi,\xi\rangle= 2L_2\beta\abs{\bar x-\bar y}^{\beta-2}\left(\frac{\beta-1}{3-\beta}\right)<0.
		\end{equation}
Applying the inequality \eqref{ma1} to any  vector $(\xi,\xi)$ with $\abs{\xi}=1$, we get that $X- Y\leq 0$ and 
		\begin{equation}\label{gilout}
		\norm{X},\norm{Y}\leq 4L_2\beta\abs{\bar x-\bar y}^{\beta-2}.
		\end{equation}

\noindent{\bf Step 2.}		
	For $\eta \neq 0$, denoting  $\hat \eta=\dfrac{\eta}{|\eta |}$ and 
		\[\A(\eta):= I+(p-2)\hat\eta\otimes \hat\eta,\] 
		the viscosity inequalities read  as
		\begin{align*}
	L_1(\bar t-t_0)+\sigma 	- f(\bar x, \bar t)&\leq |a_1|^{\gamma} \tr (\A(a_1)(X+L_1I))\\
	-\sigma 	+f(\bar y,\bar t)&\leq  -|a_2|^{\gamma}\tr (\A(a_2)(Y-L_1I)).
		\end{align*}
	Adding the two inequalities and  using that $|\bar t-t_0|\leq 2$, we get  
	
		\begin{align}\label{gregor1}
		0\leq  &2(L_1+|| f||_{L^\infty(Q_1)})+\underbrace{|a_1|^\gamma\tr (\A(a_1)(X-Y))}_{(i_1)}		+\underbrace{|a_1|^\gamma \tr ((\A(a_1)-\A(a_2))Y)}_{(i_2)}\nonumber\\
		&+\underbrace{(|a_1|^\gamma-|a_2|^\gamma)\tr(\A(a_2)Y)}_{(i_3)}+\underbrace{L_1\big[|a_1|^\gamma\tr (\A(a_1))+|a_2|^\gamma\tr (\A(a_2))}_{(i_4)} \big].
		\end{align}
		
In order to estimate ($i_1$), we  use the fact  that all the eigenvalues of $X-Y$ are non positive   and that  at least one of the eigenvalues of $X-Y$  that we denote by  $\kappa_{i_0}(X-Y)$ is   negative and smaller than $8L_2\beta\abs{\bar x-\bar y}^{\beta-2}\left(\frac{\beta-1}{3-\beta}\right)$. Indeed, applying the  matrix inequality \eqref{ma1} to the vector $(\xi,-\xi)$ where $\xi:=\frac{\bar x-\bar y}{|\bar x-\bar y|}$  and using \eqref{oufi11}, 
		we obtain
		
		\begin{align}\label{calle2}
		\langle (X-Y) \xi, \xi\rangle&\leq 4\langle Z^\tau \xi,\xi\rangle\leq 8L_2\beta\abs{\bar x-\bar y}^{\beta-2}\left(\frac{\beta-1}{3-\beta}\right)<0.
		\end{align}
		Noticing that the  eigenvalues of $\A(a_1)$ belong to $[\min(1, p-1), \max(1, p-1)]$ and using \eqref{portorico3} and \eqref{calle2}, we end up with the estimate
		
		\begin{align*}  
		|a_1|^\gamma\tr(\A(a_1) (X-Y))&\leq |a_1|^\gamma\sum_i \kappa_i(\A(a_1))\kappa_i(X-Y)\\
		&\leq |a_1|^\gamma\min(1, p-1)\kappa_{i_0}(X-Y)\\
		&\leq |a_1|^\gamma\min(1, p-1)8L_2\beta\abs{\bar x-\bar y}^{\beta-2}\left(\frac{\beta-1}{3-\beta}\right)\\
		&\leq C\left( L_2 \beta \abs{\bar x-\bar y}^{\beta-1}\right)^\gamma 8L_2\beta\abs{\bar x-\bar y}^{\beta-2}\left(\frac{\beta-1}{3-\beta}\right).
		\end{align*}
In order to estimate  ($i_2$), we decompose
		\begin{align*}\A(a_1)-\A(a_2)&=(\hat a_1\otimes \hat a_1-\hat a_2\otimes \hat a_2)(p-2)=[(\hat a_1-\hat a_2)\otimes\hat a_1
	-\hat a_2\otimes(\hat a_2-\hat a_1)](p-2)
		\end{align*}
		and get 
		\begin{align*}
		\tr( (\A(a_1)-\A(a_2)) Y)\leq n\norm{Y}
		\norm{\A(a_1)-\A(a_2)}  \leq 2n\abs{p-2}\norm{Y}|\hat a_1-\hat a_2|.
		\end{align*}
		Using that $|a_1-a_2|\leq 4 L_1$ and  the estimate \eqref{portorico3}, we get 
		
		\begin{equation*}
		\abs{\hat a_1-\hat a_2}=
		\abs{\frac{a_1}{\abs {a_1}}-\frac{a_2}{\abs {a_2}}}
\le \max\left( \frac{\abs{a_2- a_1}}{\abs{a_2}},\frac{ \abs{a_2- a_1}}{\abs{a_1}}\right)
		\le \frac {16  L_1}{ \beta L_2 \abs{\bar x-\bar y}^{\beta-1}}.
		\end{equation*}
	Recalling  that  $
		\norm{Y}=\underset{\hat \xi}{\max}\, |\langle Y\hat \xi, \hat \xi\rangle|
		\le  4L_2\beta\abs{\bar x-\bar y}^{\beta-2}$,  it follows that 
	\begin{align*}
	|a_1|^\gamma| \tr( (\A(a_1)-\A(a_2)) Y)|&\leq|a_1|^\gamma 128n\abs{p-2}L_1 \abs{\bar x-\bar y}^{-1}\\
	&\leq C\left( L_2 \beta \abs{\bar x-\bar y}^{\beta-1}\right)^\gamma\abs{p-2}L_1 \abs{\bar x-\bar y}^{-1}.
	 \end{align*}
	 Now we estimate the term ($i_3$). Notice that $|a_2|/|a_1|\leq 16$ and $|a_1-a_2|\leq 4L_1$.  Using  the mean value theorem and the estimate \eqref{portorico3},  we get that

	\begin{align*}
	 ||a_1|^\gamma-|a_2|^\gamma|&\leq \gamma \dfrac{|a_1-a_2|}{|a_1|}|a_1|^\gamma 17^{\gamma-1}\leq  \gamma CL_1\left( L_2 \beta \abs{\bar x-\bar y}^{\beta-1}\right)^{\gamma-1} &\text{if}&\quad \gamma\geq 1\\
	 &\leq |a_1-a_2|^\gamma \leq (4L_1)^\gamma  &\text{if}&\quad 0\leq \gamma\leq 1\\	
	  &\leq C |a_1-a_2|^\kappa(|a_1|^{\gamma-\kappa}+|a_2|^{\gamma-\kappa}) \leq CL_1^\kappa \left( L_2 \beta \abs{\bar x-\bar y}^{\beta-1}\right)^{\gamma-\kappa} &\text{if}&\,-1<\gamma< 0 
	\end{align*}
	where $0<\kappa<1$.\\
	
	\noindent It follows that 
	\begin{align}\label{nousatt}
	   &||a_1|^\gamma -|a_2|^\gamma|| \tr(\A(a_2)Y)|\leq n\left\|Y\right\| \,\left\|\A(a_2)\right\|\,||a_1|^\gamma-|a_2|^\gamma|\nonumber\\
&\begin{array}{lll}
	 &\quad\leq CL_2\beta\abs{\bar x-\bar y}^{\beta-2}(1+|p-2|)L_1 \left( L_2 \beta \abs{\bar x-\bar y}^{\beta-1}\right)^{\gamma-1}  &\text{if}\quad \gamma\geq 1\\
	 &\quad\leq CL_2\beta\abs{\bar x-\bar y}^{\beta-2}(1+|p-2|) L_1^\gamma &\text{if}\quad  \gamma\in [0,1]\\	
	 &\quad\leq  CL_2\beta\abs{\bar x-\bar y}^{\beta-2}(1+|p-2|)L_1^\kappa \left( L_2 \beta \abs{\bar x-\bar y}^{\beta-1}\right)^{\gamma-\kappa}&\text{if}\quad \gamma\in (-1, 0).
	 \end{array}
	 \end{align}
	 In order to estimate ($i_4$),  we use  the estimate \eqref{portorico3}, and get 
		$$ L_1(|a_1|^\gamma\tr(\A(a_1))+|a_2|^\gamma\tr(\A(a_2)))\leq  2L_1n\max(1, p-1)C(\beta)\left(L_2 \abs{\bar x-\bar y}^{\beta-1}\right)^{\gamma}.$$
		Finally, gathering the previous estimates and plugging them into \eqref{gregor1}, we get 
		\begin{align*}
		0&\leq 4L_1 + 2 \norm{f}_{L^\infty(Q_1)}+2L_1n\max(1, p-1)C(\beta)\left(L_2 \abs{\bar x-\bar y}^{\beta-1}\right)^{\gamma} \\
		& \quad+C\min(1, p-1)\left( L_2 \beta \abs{\bar x-\bar y}^{\beta-1}\right)^\gamma L_2\beta\abs{\bar x-\bar y}^{\beta-2}\left(\frac{\beta-1}{3-\beta}\right)\\
		&\quad+C\left( L_2 \beta \abs{\bar x-\bar y}^{\beta-1}\right)^\gamma \abs{p-2}L_1 \abs{\bar x-\bar y}^{-1}\\
		&\quad+ \text{\bf right hand term of}\quad \eqref{nousatt}.
		\end{align*}
		 Choosing $L_2$ large enough  \begin{align}L_2&\geq C(L_1+L_1^{\frac{1}{1+\gamma}}+ \norm{ f}_{L^\infty(Q_1)}^{\frac{1}{1+\gamma}})\geq C(\norm{u}_{L^\infty(Q_1)}+ \norm{u}_{L^\infty(Q_1)}^{\frac{1}{\gamma+1}}+\norm{ f}_{L^\infty(Q_1)}^{\frac{1}{\gamma+1}}),\label{dependfho}
		 \end{align}  
		 we end up with
		$$ 0\leq \dfrac{\min(1, p-1)\beta(\beta-1)}{1000(3-\beta)} L_2\abs{\bar x-\bar y}^{\beta-2}<0,   $$
		which is  a contradiction.   Hence,  $\Phi(x,y,t)\leq 0$ for $(x,y)\in \overline{B_{15/16}}\times\overline{ B_{15/16}}$ and $t\in [-(15/16)^2, 0]$.
		 This concludes the proof  since for $x_0,y_0\in B_{15/16}$ and $t_0\in (-(15/16)^2, 0]$, we have $\Phi(x_0,y_0, t_0)\leq 0$  and we get
		\[
		|u(x_0, t_0)-u(y_0, t_0)|\leq L_2|x_0-y_0|^\beta.
		\]
		Remembering the dependence of $L_2$ (see \eqref{dependfho}), we get the desired result.
\end{proof}

	\subsection{Local Lipschitz estimates}

	\begin{lemma}\label{liphomeq}
	Let $-1<\gamma<\infty$ and $1<p<\infty$.	Let $u$ be a bounded viscosity solution to equation \eqref{maineq}. For all $r\in \left(0,\frac78\right)$, and  for all  $x,y\in \overline{B_{r}}$ and $t\in [-r^2,0]$, it holds
		\begin{equation}
		\begin{split}
		\abs{u(x,t)-u(y,t)}\leq \tilde C \left(\norm{ u}_{L^\infty(Q_1)}+\norm{ u}_{L^\infty(Q_1)}^{\frac{1}{1+\gamma}}+\norm{ f}_{L^\infty(Q_1)}^{\frac{1}{1+\gamma}}\right) \abs{x-y}, 
		\end{split}
		\end{equation}
		where  $\tilde{C}=\tilde{C}(p,n, \gamma)>0$.
		
	\end{lemma}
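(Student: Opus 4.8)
The strategy is to run the Ishii--Lions scheme of Lemma~\ref{holdnormsol} once more, now with a test function that is essentially linear at the origin, so that its (still strictly negative) second derivative can be played against the strict ellipticity of the equation. Fix $r\in\left(0,\frac78\right)$, points $x_0,y_0\in\overline{B_r}$ and $t_0\in[-r^2,0]$. For $\vp(s):=s-\kappa_0 s^{\nu}$, with constants $\kappa_0=\kappa_0(p,n,\gamma)>0$ and $\nu=\nu(p,n,\gamma)\in(1,2)$ to be fixed, and for $L_1,L_2>0$, set
\[
\Phi(x,y,t):=u(x,t)-u(y,t)-L_2\vp(\abs{x-y})-\frac{L_1}{2}\abs{x-x_0}^2-\frac{L_1}{2}\abs{y-y_0}^2-\frac{L_1}{2}(t-t_0)^2 .
\]
The quadratic terms localize while $L_2\vp(\abs{x-y})$ governs the modulus; since $\vp''(s)=-\kappa_0\nu(\nu-1)s^{\nu-2}<0$, the function $\vp$ is concave, and as $\vp(s)\le s$ it is an admissible replacement for $s^{\beta}$. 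The goal is to prove that $\Phi\le 0$ on $\overline{B_r}\times\overline{B_r}\times[-r^2,0]$ as soon as $L_2\ge C(p,n,\gamma)\big(\norm{u}_{L^\infty(Q_1)}+\norm{u}_{L^\infty(Q_1)}^{1/(1+\gamma)}+\norm{f}_{L^\infty(Q_1)}^{1/(1+\gamma)}\big)$ (and $L_1$ is large enough); this forces $u(x_0,t_0)-u(y_0,t_0)\le L_2\vp(\abs{x_0-y_0})\le L_2\abs{x_0-y_0}$, and exchanging $x_0$ and $y_0$ then gives the lemma with $\tilde C=C(p,n,\gamma)$.

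Assume, for contradiction, that $\Phi$ has a positive maximum at $(\bar x,\bar y,\bar t)$. Taking $L_1$ large in terms of $\osc_{Q_1}u$ and $r$ makes $(\bar x,\bar y,\bar t)$ interior, and positivity forces $\bar x\neq\bar y$. Here the already established Hölder estimate is crucial: from $L_2\vp(\abs{\bar x-\bar y})\le u(\bar x,\bar t)-u(\bar y,\bar t)\le C\abs{\bar x-\bar y}^{\beta}$ (with $\beta\in(0,1)$ fixed) one gets that $\abs{\bar x-\bar y}$ is small once $L_2$ is large, so that $\kappa_0\abs{\bar x-\bar y}^{\nu-1}\le\frac12$ and $\vp'(\abs{\bar x-\bar y})\asymp 1$. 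Then apply the Jensen--Ishii lemma exactly as in Step~1 of Lemma~\ref{holdnormsol}, obtaining $(\sigma+L_1(\bar t-t_0),a_1,X+L_1I)\in\overline{\mathcal{P}}^{2,+}u(\bar x,\bar t)$ and $(\sigma,a_2,Y-L_1I)\in\overline{\mathcal{P}}^{2,-}u(\bar y,\bar t)$ with $a_1=L_2\vp'(\abs{\bar x-\bar y})\xi+L_1(\bar x-x_0)$, $a_2=L_2\vp'(\abs{\bar x-\bar y})\xi-L_1(\bar y-y_0)$, $\xi=(\bar x-\bar y)/\abs{\bar x-\bar y}$, together with $X,Y\in\mathcal{S}^n$ satisfying a matrix inequality of the type~\eqref{ma1} for a suitable $\tau$. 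Since $\vp'(\abs{\bar x-\bar y})\asymp 1$ and the $L_1$--corrections are negligible against $L_2$, one has $\abs{a_1}\asymp\abs{a_2}\asymp L_2$; testing~\eqref{ma1} against $(\xi,-\xi)$ gives that $X-Y\le 0$, that one of its eigenvalues is $\le c\,L_2\vp''(\abs{\bar x-\bar y})=-c\,L_2\kappa_0\nu(\nu-1)\abs{\bar x-\bar y}^{\nu-2}<0$, and that $\norm{X},\norm{Y}\lesssim L_2\abs{\bar x-\bar y}^{-1}$.

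Adding the two viscosity inequalities and decomposing the right-hand side into the five groups $(i_1)$--$(i_4)$ and the source term $2(L_1+\norm{f}_{L^\infty(Q_1)})$, exactly as in Step~2 of Lemma~\ref{holdnormsol}, the dominant contribution is $(i_1)=\abs{a_1}^{\gamma}\tr(\A(a_1)(X-Y))$: by the nonpositivity of the spectrum of $X-Y$, the size of its most negative eigenvalue, the fact that the eigenvalues of $\A(a_1)$ lie in $[\min(1,p-1),\max(1,p-1)]$, and $\abs{a_1}^{\gamma}\asymp L_2^{\gamma}$, one gets $(i_1)\le-c\,L_2^{1+\gamma}\kappa_0\abs{\bar x-\bar y}^{\nu-2}$. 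The remaining terms are estimated with the same algebra as in Lemma~\ref{holdnormsol}: $(i_2)$ through $\A(a_1)-\A(a_2)=(p-2)(\hat a_1\otimes\hat a_1-\hat a_2\otimes\hat a_2)$ and $\abs{\hat a_1-\hat a_2}\lesssim L_1/L_2$; $(i_3)$ through the mean value theorem and the elementary bounds for $s\mapsto s^{\gamma}$, treated separately for $\gamma\ge 1$, $0\le\gamma<1$ and $-1<\gamma<0$; $(i_4)$ being just the $L_1$--terms; and all of them carry strictly lower powers of $L_2$ than the factor $L_2^{1+\gamma}$ in $(i_1)$. Choosing, in this order, $\kappa_0$ and $\nu$ depending only on $p,n,\gamma$, then $L_1$ depending on $p,n,\gamma$ and $\osc_{Q_1}u$, and finally $L_2\ge C(p,n,\gamma)\big(\norm{u}_{L^\infty(Q_1)}+\norm{u}_{L^\infty(Q_1)}^{1/(1+\gamma)}+\norm{f}_{L^\infty(Q_1)}^{1/(1+\gamma)}\big)$ large enough, the term $(i_1)$ absorbs all the others and one reaches $0\le-c\,L_2^{1+\gamma}\kappa_0\abs{\bar x-\bar y}^{\nu-2}<0$, a contradiction. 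Hence $\Phi\le 0$ and the lemma follows.

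The main difficulty, and the reason the computation is long, is precisely this balancing. In the Hölder case $\vp(s)=s^{\beta}$ has a second derivative of size $\abs{\bar x-\bar y}^{\beta-2}$ comparable to $\norm{X}$ and $\norm{Y}$; here $\vp$ must be taken almost linear, so that the gradients $a_i$ have size $\asymp L_2$ --- which is what produces a Lipschitz rather than a Hölder bound --- but this makes the concavity of $\vp$ only of order $\kappa_0\abs{\bar x-\bar y}^{\nu-2}$ with $\nu$ close to $1$. One has to track every power of $L_1$, $L_2$ and $\abs{\bar x-\bar y}$ in the error terms, use the Hölder estimate of Lemma~\ref{holdnormsol} to confine $\abs{\bar x-\bar y}$ to the regime where $\vp'\asymp 1$ and where the singular error terms carry only a factor $L_2^{\gamma}$, and split into the three ranges of $\gamma$ when estimating $\abs{a_1}^{\gamma}-\abs{a_2}^{\gamma}$.
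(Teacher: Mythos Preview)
Your overall architecture is correct and matches the paper's: same Ishii--Lions scheme, same test profile $\vp(s)=s-\kappa_0 s^{\nu}$, same decomposition $(i_1)$--$(i_4)$. But there is a genuine gap in how you use the H\"older estimate, and it makes the balancing fail.

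You only invoke Lemma~\ref{holdnormsol} to deduce that $|\bar x-\bar y|$ is small, hence $\vp'(|\bar x-\bar y|)\asymp 1$. From there you bound $|a_1-a_2|\lesssim L_1$ and $|\hat a_1-\hat a_2|\lesssim L_1/L_2$, so that $(i_2)\lesssim L_2^{\gamma}\,L_1\,|\bar x-\bar y|^{-1}$ (and similarly for $(i_3)$). But the main negative term is $(i_1)\le -c\,L_2^{\gamma+1}\kappa_0|\bar x-\bar y|^{\nu-2}$. The ratio $|(i_2)|/|(i_1)|\sim (L_1/L_2)\,|\bar x-\bar y|^{1-\nu}$ blows up as $|\bar x-\bar y|\to 0$ because $\nu>1$; you have no lower bound on $|\bar x-\bar y|$, so no choice of $L_2$ absorbs this term uniformly. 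Saying the error terms ``carry strictly lower powers of $L_2$'' is not enough: the powers of $|\bar x-\bar y|$ do not match.

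The missing step, which the paper does, is to use the H\"older bound inside the localization: from $\Phi(\bar x,\bar y,\bar t)>0$ one gets $\tfrac{L_1}{2}|\bar x-x_0|^2\le u(\bar x,\bar t)-u(\bar y,\bar t)\le C_H|\bar x-\bar y|^{\beta}$, hence (after adjusting $L_1\le C_H$) $L_1|\bar x-x_0|,\,L_1|\bar y-y_0|\le C_H|\bar x-\bar y|^{\beta/2}$. This gives $|a_1-a_2|\le 2C_H|\bar x-\bar y|^{\beta/2}$ and therefore $|\hat a_1-\hat a_2|\lesssim (C_H/L_2)|\bar x-\bar y|^{\beta/2}$, so $(i_2)$ and $(i_3)$ pick up an extra factor $|\bar x-\bar y|^{\beta/2}$. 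Now choosing $\nu=1+\beta/2$ makes the exponents in $(i_1)$ and in the error terms coincide ($\nu-2=-1+\beta/2$), and one can absorb by taking $L_2\ge C\,C_H$. In short: the H\"older estimate must be used to control $|a_1-a_2|$ in terms of a power of $|\bar x-\bar y|$, not merely to bound $|\bar x-\bar y|$ from above; and $\nu$ is not free but dictated by $\beta$.
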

	\begin{proof}
 In the sequel we fix $r=7/8$ and  we fix $x_0, y_0\in B_{r}$, $t_0\in (-r^2,0)$. For positive constants $L_1, L_2$, we consider the  function 
		\begin{align*}
		\Phi(x, y,t):&=u(x,t)-u(y,t)-L_2\vp(\abs{x-y})-\frac {L_1}{2}\abs{x-x_0}^2-\frac {L_1}{2}\abs{y-y_0}^2-\frac{ L_1}{ 2} (t-t_0)^2,
		\end{align*}
		where $\vp$ is defined below. We want to show that $\Phi(x, y,t)\leq 0$ for $(x,y)\in \overline{B_r}\times\overline{ B_r}$ and $t\in [-r^2,0]$.
	 This time we take
		\[
		\begin{split}
		\vp(s)=
		\begin{cases}
		s-s^{\nu}\kappa_0& 0\le s\le s_1:=(\frac 1 {4\nu\kappa_0})^{1/(\nu-1)}  \\
		\vp(s_1)& \text{otherwise},
		\end{cases}
		\end{split}
		\]
		where $2>\nu>1$ and $\kappa_0>0$ is taken so that  $s_1>2 $ and $\nu \kappa_0s_1^{\nu-1}\leq 1/4$.
		
\noindent Then $\vp$ is smooth in $(0, s_1)$ and for $s\in (0, s_1)$ we have
		$$
		\begin{cases}
		\vp'(s)&=
		1-\nu s^{\nu-1}\kappa_0,\\
		\vp''(s)&=
		-\nu(\nu-1)s^{\nu-2} \kappa_0. 
	\end{cases}$$
		Next, observe that with these choices we have $\varphi'(s)\in  [\frac34,1]$  and $\varphi''(s)<0$ when $s\in (0,2]$.

 We proceed by contradiction assuming that
		$\Phi$ has a positive
		maximum at some point $(\bar x, \bar y,\bar t)\in \bar B_r\times \bar B_r\times [-r^2,0]$ and  we are going to get a contradiction for $L_2$, $L_1$
large enough and for a suitable choice of $\nu$.
	As in the proof of the Hölder estimate, 	we notice  that $\bar x\neq \bar y$ and  for $L_1\geq C\norm{u}_{L^\infty(Q_1)}$,  we have 
that $\bar x$ and $\bar y$ are in $B_r$ and $\bar t\in (-r^2,0)$. Moreover,  from Lemma \ref{holdnormsol}  we know that  $u$ is locally H\"older continuous, and that for any $\beta\in (0,1)$  there exists a constant $C_{H}>0$ 
$$C_{H}:= C\times \left(\norm{ u}_{L^\infty(Q_1)}+\norm{ u}_{L^\infty(Q_1)}^{\frac{1}{1+\gamma}}+\norm{ f}_{L^\infty(Q_1)}^{\frac{1}{1+\gamma}}\right)$$ 
 such that 
		$$|u(x,t)-u(y,t )|\leq C_{H}|x-y|^\beta \quad\text{for}\, x, y\in B_r, t\in (-r^2,0).$$	
		Using this estimate and adjusting the constants (by  choosing $2L_1\leq C_{H}$), we have that  
		\begin{equation}\label{karatte}
L_1\abs{\bar y-y_0},		L_1\abs{\bar x-x_0}\leq C_{H}\abs{\bar x-\bar y}^{\beta/2}.
		\end{equation} 	
From the  Jensen-Ishii's lemma, we have  the existence of 
		
		\[
		\begin{split}
		&(\sigma+L_1(\bar t-t_0),a_1,X+L_1I)\in \ol P^{2,+}u(\bar x,\bar t),\\ &(\sigma,a_2,Y-L_1I)\in \ol P^{2,-}u(\bar y,\bar t),
		\end{split}
		\]
		where 
		\[
		\begin{split}
		a_1&=L_2\vp'(|\bar x-\bar y|) \frac{\bar x-\bar y}{\abs{\bar x-\bar y}}+L_1(\bar x-x_0),\\
		a_2&=L_2\vp'(|\bar x-\bar y|) \frac{\bar x-\bar y}{\abs{\bar x-\bar y}}-L_1(\bar y-y_0).
		\end{split}
		\]
	Recalling that $\vp'\geq \frac34$, then  if  $L_2\geq 4C_{H}$, we have
		\begin{equation}\label{hertzi}
	2L_2\geq	\abs{a_1},\abs{a_2}\geq L_2\varphi'(|\bar x-\bar y|) - C_{H}\abs{\bar x-\bar y}^{\beta/2}\ge \frac{L_2}{2}.
		\end{equation}
		
\noindent Moreover, by  Jensen-Ishii's lemma, for any $\tau>0$, we can take $X, Y\in \mathcal{S}^n$ such that 
		\begin{equation}\label{matineq22}
		- \big[\tau+2\norm{Z}\big] \begin{pmatrix}
		I&0\\
		0&I 
		\end{pmatrix}\leq	\begin{pmatrix}
		X&0\\
		0&-Y 
		\end{pmatrix}
		\le 
		\begin{pmatrix}
		Z&-Z\\
		-Z&Z 
		\end{pmatrix}
		+\frac2\tau \begin{pmatrix}
		Z^2&-Z^2\\
		-Z^2&Z^2 
		\end{pmatrix},
		\end{equation}
		where 
		\begin{align*}	
		Z=L_2\vp''(|\bar x-\bar y|) \frac{\bar x-\bar y}{\abs{\bar x-\bar y}}\otimes \frac{\bar x-\bar y}{\abs{\bar x-\bar y}} +\frac{L_2\vp'(|\bar x-\bar y|)}{\abs{\bar x-\bar y}}\Bigg( I- \frac{\bar x-\bar y}{\abs{\bar x-\bar y}}\otimes \frac{\bar x-\bar y}{\abs{\bar x-\bar y}}\Bigg)
		\end{align*}	
		and 
		\begin{align*}	
		Z^2=
		\frac{L_2^2(\vp'(|\bar x-\bar y|))^2}{\abs{\bar x-\bar y}^2}\Bigg( I- \frac{\bar x-\bar y}{\abs{\bar x-\bar y}}\otimes \frac{\bar x-\bar y}{\abs{\bar x-\bar y}}\Bigg)+L_2^2(\vp''(|\bar x-\bar y|))^2 \frac{\bar x-\bar y}{\abs{\bar x-\bar y}}\otimes \frac{\bar x-\bar y}{\abs{\bar x-\bar y}}.
\end{align*}	
Simple computations give
		\begin{align}\label{lilou}
		\norm{Z}&\leq L_2 \frac{\vp'(|\bar x-\bar y|)}{|\bar x-\bar y|},\\
	 \norm{Z^2}&\leq L_2^2\left(|\vp''(|\bar x-\bar y|)|+\dfrac{|\vp'(|\bar x-\bar y|)|}{|\bar x-\bar y|}\right)^2,\label{filo}
		\end{align}
		and for $\xi=\frac{\bar x-\bar y}{\abs{\bar x-\bar y}}$, we have
	
				\begin{equation*}
		\langle Z\xi,\xi\rangle=L_2\vp''(|\bar x-\bar y|)<0, \qquad\langle Z^2\xi,\xi\rangle=L_2^2(\vp''(|\bar x-\bar y|))^2.
		\end{equation*}
		We take  $\tau=4L_2\left(|\vp''(|\bar x-\bar y|)|+\dfrac{|\vp'(|\bar x-\bar y|)|}{|\bar x-\bar y|}\right)$ and we observe that for $\xi=\frac{\bar x-\bar y}{\abs{\bar x-\bar y}}$,
			\begin{align}\label{mercit3}
		\langle Z\xi,\xi\rangle +\frac2\tau \langle Z^2\xi,\xi\rangle&=L_2\left(\vp''(|\bar x-\bar y|)+\frac2\tau L_2(\vp''(|\bar x-\bar y|))^2\right)\nonumber\\
		&\leq \dfrac{L_2}{2}\vp''(|\bar x-\bar y|)<0 .
		\end{align}
		From  \eqref{matineq22}, we deduce that $X- Y\leq 0$ and $\norm{X},\norm{Y}\leq 2\norm{Z}+\tau$. Moreover,  applying the matrix inequality \eqref{matineq22} to the vector $(\xi,-\xi)$ where $\xi:=\frac{\bar x-\bar y}{|\bar x-\bar y|}$  and using \eqref{mercit3}, 
		we obtain
		\begin{align}\label{camilleprt}
		\langle (X-Y) \xi, \xi\rangle&\leq 4\left(\langle Z\xi,\xi\rangle+\frac2\tau\langle Z^2\xi,\xi\rangle\right)\leq 2 L_2\vp''(|\bar x-\bar y|)<0.
		\end{align}
	This implies that  at least one of the eigenvalue of $X-Y$  that we denote by  $\lambda_{i_0}$ is   negative and smaller than $2 L_2\vp''(|\bar x-\bar y|)$. Writing the  viscosity inequalities and adding them, we have 
			\begin{align}\label{gregory1fout}
		0&\leq 2(L_1+||f||_{L^\infty(Q_1)} )+|a_1|^\gamma\underbrace{\tr (\A(a_1)(X-Y))}_{(1)}	+\underbrace{|a_1|^\gamma \tr ((\A(a_1)-\A(a_2))Y)}_{(2)}\nonumber\\
		&+\underbrace{(|a_1|^\gamma-|a_2|^\gamma)\tr(\A(a_2)Y)}_{(3)}+\underbrace{L_1\big[|a_1|^\gamma\tr (\A(a_1))+|a_2|^\gamma\tr (\A(a_2))}_{(4)} \big].
		\end{align}
		 The eigenvalues of $\A(a_1)$ belong to $[\min(1, p-1), \max(1, p-1)]$.	Using \eqref{camilleprt}, it follows  that we can estimate $(1)$ by
		\begin{align*}  
		\tr(\A(a_1) (X-Y))&\leq \sum_i \lambda_i(\A(a_1))\lambda_i(X-Y)\\
		&\leq \min(1, p-1)\lambda_{i_0}(X-Y)\\
		&\leq 2\min(1, p-1) L_2 \vp''(|\bar x-\bar y|).
		\end{align*}
		As in the proof of the Hölder estimate, we estimate $(2)$ by
		\begin{align*}
		\tr( (\A(a_1)-\A(a_2)) Y)
		\leq 2n\abs{p-2}\norm{Y}|\hat a_1-\hat\eta_2|.
		\end{align*}
	With the new choice of $\varphi$,  we have
		\begin{equation*}
		\begin{split}
		\abs{\hat a_1-\hat a_2}=
		\abs{\frac{a_1}{\abs {a_1}}-\frac{a_2}{\abs {a_2}}}
\le \max\left( \frac{\abs{a_2- a_1}}{\abs{a_2}},\frac{ \abs{a_2- a_1}}{\abs{a_1}}\right)\le \frac {8C_{H}}{ L_2}\abs{\bar x-\bar y}^{\beta/2},
		\end{split}
		\end{equation*}
		where we used \eqref{karatte} and \eqref{hertzi}.
		Using  \eqref{matineq22}--\eqref{filo}, we have
		\begin{equation*}
		\norm{Y}
		\le 2 |\langle Z\ol \xi,\ol \xi \rangle|+\frac4\tau|\langle Z^2\ol \xi,\ol \xi \rangle| \leq 4L_2\left( \frac{\vp'(|\bar x-\bar y|)}{\abs{\bar x-\bar y}}+ |\vp''(|\bar x-\bar y|)|\right).
		\end{equation*}
		Hence, remembering that $|\bar x-\bar y|\leq 2$ and $|a_1|^\gamma \leq CL_2^\gamma$, we end up with
	\begin{align*}
	|a_1|^\gamma| \tr( (\A(a_1)-\A(a_2)) Y)|&\leq CL_2^\gamma \abs{p-2}C_{H} \vp'(|\bar x-\bar y|) \abs{\bar x-\bar y}^{-1+\beta/2}\\
	 &\quad+C L_2^\gamma \abs{p-2}C_{H} |\vp''(|\bar x-\bar y|)|.
	 \end{align*}
	 Using the mean value theorem and the estimates \eqref{karatte} and \eqref{hertzi}, we have
	 \begin{align*}
	 ||a_1|^\gamma-|a_2|^\gamma|&\leq \gamma \dfrac{|a_1-a_2|}{|a_1|}|a_1|^\gamma 8^{\gamma-1}\leq  CC_{H} L_2^{\gamma-1} \abs{\bar x-\bar y}^{\beta/2}&\text{if}&\quad \gamma\geq 1\\
	 &\leq |a_1-a_2|^\gamma \leq (C_{H} |\bar x-\bar y|^{\beta/2})^\gamma  &\text{if}&\quad 0< \gamma\leq 1\\	
	  &\leq |a_1-a_2|^\kappa(|a_1|^{\gamma-\kappa}+|a_2|^{\gamma-\kappa})\leq CL_2^{\gamma-\kappa} \left( C_{H} \abs{\bar x-\bar y}^{\beta/2}\right)^{\kappa}  &\text{if}&\, -1<\gamma\leq  0
	\end{align*}
	where $0<\kappa<1$.\\
	It follows (using that $|\bar x-\bar y|)\leq 2$)
\begin{align}\label{nousa1li}
||a_1|^\gamma-&|a_2|^\gamma||\tr(\A(a_2)Y)|\leq n\left\|Y\right\| \,\left\|\A(a_2)\right\|\, ||a_1|^\gamma-|a_2|^\gamma|\\
&\begin{array}{lll}
	 \leq  CL_2\left( \abs{\bar x-\bar y}^{-1+\beta/2}+ |\vp''(|\bar x-\bar y|)|\right)(1+|p-2|)C_{H} L_2^{\gamma-1}   &\text{if}\quad \gamma\geq 1\\
	\leq  CL_2\left( \frac{\vp'(|\bar x-\bar y|)}{\abs{\bar x-\bar y}}+ |\vp''(|\bar x-\bar y|)|\right)(1+|p-2|)
	 C_{H}^\gamma|\bar x-\bar y|^{\frac{\gamma\beta}{2}} &\text{if}\quad 0<\gamma\leq 1\\	
	 \leq  CL_2^{1+\gamma-\kappa}\left( \frac{\vp'(|\bar x-\bar y|)}{\abs{\bar x-\bar y}}+ |\vp''(|\bar x-\bar y|)|\right)(1+|p-2|)|\bar x-\bar y|^{\frac{\kappa\beta}{2}}C_{H}^\kappa  &\text{if}\, -1<\gamma\leq 0.\nonumber
	\end{array}
	 \end{align}
		 Finally, we have
		$$ L_1[|a_1|^\gamma\tr(\A(a_1))+|a_2|^\gamma\tr(\A(a_2))]\leq 2CL_2^\gamma L_1n\max(1, p-1).$$
		Gathering the previous estimates with \eqref{gregory1fout} and recalling the definition of $\vp$, we get 

	\begin{align*}
		0&\leq 2(L_1+\norm{ f}_{L^\infty(Q_1)})+CL_2^\gamma L_1\max(1, p-1)\\
		&\quad +CL_2^\gamma \abs{p-2}C_{H} \abs{\bar x-\bar y}^{-1+\beta/2}+C L_2^\gamma \abs{p-2}C_{H} |\bar x-\bar y|^{\nu-2}\\
		&\quad - 2CL_2^\gamma \min(1, p-1) L_2(\nu-1)\nu \kappa_0|\bar x-\bar y|^{\nu-2}+\text{\bf right hand term of}\quad \eqref{nousa1li}.
		\end{align*}
		Taking 
		$\nu=1+\frac\beta 2$, recalling the dependence of $\bar C_H$ and choosing $L_2$ large 
$$L_2\geq  C \left(\norm{ u}_{L^\infty(Q_1)}+\norm{ u}_{L^\infty(Q_1)}^{\frac{1}{1+\gamma}}+\norm{ f}_{L^\infty(Q_1)}^{\frac{1}{1+\gamma}}\right),	$$ 
	  we get that
		$$ 0\leq \dfrac{-\min(1, p-1)\nu(\nu-1)\kappa_0}{1000} L_2\abs{\bar x-\bar y}^{\nu-2}<0,   $$
		which is  a contradiction. 
		  It follows that  $\Phi(x,y,t)\leq 0$ for $(x,y,t)\in \overline{B_r\times} \overline{B_{r}}\times[-r^2,0]$. The desired result follows since for $x_0,y_0\in B_{r}$, $t_0\in (-r^2,0)$, we have $\Phi(x_0,y_0,t_0)\leq 0$, so that 
		\[
		|u(x_0,t_0)-u(y_0,t_0)|\leq L_2\vp(|x_0-y_0|)\leq L_2|x_0-y_0|.\qedhere
		\]
		
	\end{proof}
\section{Proof of the uniform  Hölder and Lipschitz estimates}\label{sect7}
In this section  we provide a proof for Lemma \ref{Liphom1}.   Assume that  $0\leq\gamma<\infty$ and consider  bounded solutions  $w$ to 
\begin{equation}\label{deviaapen}
	\partial_t w-|Dw+q|^\gamma\left[\Delta w+(p-2) \left\langle D^2w\frac{Dw+q}{\abs{Dw+q}}, \frac{Dw+q}{\abs{Dw+q}}\right\rangle\right]= \bar f\quad\text{in}\quad Q_1.
	\end{equation}
Noticing that $h(x,t):= w(x,t)+q\cdot x$  is a solution of \eqref{maineq}. It follows from Lemma \ref{liphomeq} that $w$ is Lipschitz continuous with respect to the space variable. Moreover, for  $x,y\in B_{7/8}$ and $t\in(-(7/8)^2, 0]$, it holds
\begin{align}\label{girou}
|w(x,t)-w(y,t)|&\leq |h(x, t)-h(y,t)|+|q||x-y|\nonumber\\
&\leq \left(|q|+C(\norm{h}_{L^\infty(Q_1)}+\norm{f}_{L^\infty(Q_1)}^{\frac{1}{\gamma+1}}+\norm{h}_{L^\infty(Q_1)}^{\frac{1}{\gamma+1}})\right)|x-y|\nonumber\\
&\leq C\left(|q|+1+\norm{w}_{L^\infty(Q_1)}+\norm{ f}_{L^\infty(Q_1)}\right)|x-y|.
\end{align}
Hence, 
if $|q|\geq\Gamma_0:= 2+\norm{w}_{L^\infty(Q_1)}+\norm{\bar f}_{L^\infty(Q_1)}$, then  for $(x,t), (y,t) \in Q_{7/8}$, we have
\begin{equation}\label{ramisto}|w(x,t)-w(y,t)|\leq C(p,n,\gamma)|q||x-y|.
\end{equation}
We will improve this estimate and  provide uniform estimates for deviation from planes with $|q|>\Gamma_0$. The case  $|q|<\Gamma_0$ follows from  \eqref{girou}.
In order to prove uniform Lipschitz estimates with respect to $q$, we first need to prove uniform Hölder estimates.
	\subsection{Local uniform Hölder estimates}
	\begin{lemma}\label{holdnorm}
		Let $w$ be a bounded viscosity solution to equation \eqref{deviaapen} with  $|q|>\Gamma_0$.  There exist  a constant  $\beta=\beta(p,n, \gamma)\in (0,1)$ and  a constant $C=C(p,n,\gamma)>0$ such that   for all  $x,y\in B_{13/16}$ and $t \in (-(13/16)^2,0]$, it holds
		\begin{equation}
		\begin{split}
		\abs{w(x,t)-w(y,t)}\leq C(1+\norm{w}_{L^\infty(Q_1)}+\norm{\bar f}_{L^\infty(Q_1)}) \abs{x-y}^\beta.
		\end{split}
		\end{equation}
		
	\end{lemma}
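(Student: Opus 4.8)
The plan is to treat the substantial range $\abs{q}>\Gamma_0$ (the range $\abs{q}<\Gamma_0$ being already covered by \eqref{girou}) by rerunning the Ishii--Lions doubling argument from the proof of Lemma \ref{holdnormsol}, the only new issue being to keep every constant independent of $q$. First I would fix $x_0,y_0\in B_{13/16}$, $t_0\in(-(13/16)^2,0)$ and a small exponent $\beta\in(0,1)$, to be pinned down below and depending only on $p,n,\gamma$, and study the auxiliary function
\[
\Phi(x,y,t):=w(x,t)-w(y,t)-L_2\abs{x-y}^{\beta}-\tfrac{L_1}{2}\bigl(\abs{x-x_0}^2+\abs{y-y_0}^2+(t-t_0)^2\bigr),
\]
arguing by contradiction that $\Phi$ has a positive maximum at some $(\bar x,\bar y,\bar t)$. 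Choosing $L_1\simeq\osc_{Q_1}w$ large (with the implicit constant fixed by the distance $1/16$ from $B_{13/16}$ to $\partial B_{7/8}$) localizes $(\bar x,\bar y,\bar t)$ inside $Q_{7/8}$ and forces $\bar x\neq\bar y$.

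The decisive step, and the reason this is not a verbatim copy of Lemma \ref{holdnormsol}, is that the ellipticity coefficients of \eqref{deviaapen} involve the shifted gradient $Dw+q$, so one must prevent $a_i+q$ from vanishing or being small. Here I would invoke the crude bound \eqref{ramisto}: positivity of the maximum gives $L_2\abs{\bar x-\bar y}^{\beta}\le w(\bar x,\bar t)-w(\bar y,\bar t)$, while \eqref{ramisto} bounds the right-hand side by $C(p,n,\gamma)\abs{q}\,\abs{\bar x-\bar y}$, whence
\[
L_2\abs{\bar x-\bar y}^{\beta-1}\le C(p,n,\gamma)\,\abs{q}.
\]
Since the Jensen--Ishii jets obey $\abs{a_i}\le 2L_2\beta\abs{\bar x-\bar y}^{\beta-1}$ exactly as in \eqref{portorico3}, this gives $\abs{a_i}\le 2\beta C(p,n,\gamma)\abs{q}$, so fixing $\beta=\beta(p,n,\gamma)$ small enough that $2\beta C(p,n,\gamma)\le\tfrac14$ yields $\tfrac12\abs{q}\le\abs{a_i+q}\le 2\abs{q}$. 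In particular $a_i+q\neq0$, so $h:=w+q\cdot x$, a solution of \eqref{maineq}, satisfies the non-degenerate viscosity inequalities at both contact points, $\abs{a_i+q}^{\gamma}\simeq\abs{q}^{\gamma}$, and, recombining with the last display, $\min_i\abs{a_i+q}\ge\tfrac12\abs{q}\ge L_2\abs{\bar x-\bar y}^{\beta-1}/(2C(p,n,\gamma))$; this inequality is what will make the slope-difference error terms small.

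With this in hand I would continue as in Lemma \ref{holdnormsol}: derive the matrix inequality \eqref{ma1}, yielding $X-Y\le0$, $\norm{X},\norm{Y}\lesssim L_2\abs{\bar x-\bar y}^{\beta-2}$ and one eigenvalue of $X-Y$ at most $8L_2\beta\abs{\bar x-\bar y}^{\beta-2}\tfrac{\beta-1}{3-\beta}<0$, add the two viscosity inequalities, and estimate the four resulting terms with $a_i+q$ in place of $a_i$. The trace-rearrangement bound makes $\abs{a_1+q}^{\gamma}\tr(\A(a_1+q)(X-Y))$ a negative quantity of size $\simeq\abs{q}^{\gamma}L_2\abs{\bar x-\bar y}^{\beta-2}$; the terms built from $\abs{\widehat{a_1+q}-\widehat{a_2+q}}$ and from $\abs{a_1+q}^{\gamma}-\abs{a_2+q}^{\gamma}$, and the lower-order $L_1$ term, are controlled via $\abs{a_1-a_2}\le 4L_1$, $\abs{a_i+q}^{\gamma}\le(2\abs{q})^{\gamma}$, $\abs{\widehat{a_1+q}-\widehat{a_2+q}}\le 2\abs{a_1-a_2}/\min_i\abs{a_i+q}$, and the two facts $\abs{a_i+q}\simeq\abs{q}$ and $\abs{q}\gtrsim L_2\abs{\bar x-\bar y}^{\beta-1}$ obtained above; together with the $2(L_1+\norm{\bar f}_{L^\infty(Q_1)})$ term (here $\abs{q}^{\gamma}\ge1$ since $\abs{q}>\Gamma_0\ge2$), their sum divided by the leading negative term is at most $C(p,n,\gamma)(1+L_1+\norm{\bar f}_{L^\infty(Q_1)})/L_2$. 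Choosing $L_2=C(p,n,\gamma)(1+\norm{w}_{L^\infty(Q_1)}+\norm{\bar f}_{L^\infty(Q_1)})$ large enough, recalling $L_1\simeq\osc_{Q_1}w\le 2\norm{w}_{L^\infty(Q_1)}$, makes this $\le\tfrac14$, so the whole sum is at most half the leading negative term, hence strictly negative, a contradiction. Thus $\Phi\le0$, and evaluating at $(x_0,y_0,t_0)$ yields $\abs{w(x_0,t_0)-w(y_0,t_0)}\le L_2\abs{x_0-y_0}^{\beta}$.

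The hard part is the non-degeneracy step. One has to notice that at a positive maximum of $\Phi$ the weak, $q$-dependent estimate \eqref{ramisto} already forces $L_2\abs{\bar x-\bar y}^{\beta-1}$ below a fixed multiple of $\abs{q}$, and that this single relation does double duty: it keeps $\abs{Dw+q}$ away from zero at the contact points (so the degeneracy factor $\abs{Dw+q}^{\gamma}$ is harmless, factoring out as $\abs{q}^{\gamma}$), and it provides exactly the smallness needed to absorb the direction-difference terms after dividing by the leading second-order term. The price is that one is forced to work with one fixed small exponent $\beta=\beta(p,n,\gamma)$ rather than an arbitrary $\beta\in(0,1)$, which is why the statement only asserts the existence of such a $\beta$.
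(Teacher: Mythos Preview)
Your proposal is correct and follows essentially the same approach as the paper's own proof: the same doubling function, the same use of the crude $q$-dependent Lipschitz bound \eqref{ramisto} at the positive maximum to force $\abs{a_i}\le \tfrac12\abs{q}$ for a fixed small $\beta=\beta(p,n,\gamma)$, and the same term-by-term estimates to reach the contradiction. The paper organizes the final inequality by dividing through by $\abs{\eta_1}^{\gamma}$ rather than comparing each term to the leading negative one, but this is a cosmetic difference.
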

	\begin{proof}

We fix $x_0, y_0\in B_{13/16}$, $t_0\in (-(13/16)^2,0)$. For suitable  constants $L_1, L_2>0$, we define the auxiliary function 
		\begin{align*}
		\Phi(x, y,t):&=w(x,t)-w(y,t)-L_2\abs{x-y}^\beta-\frac{L_1}{2}\abs{x-x_0}^2-
		\frac{L_1}{2}\abs{y-y_0}^2-\frac{L_1}{ 2} (t-t_0)^2.
		\end{align*}
		 We want to show that $\Phi(x, y,t)\leq 0$ for $(x,y)\in \overline{B_{13/16}}\times\overline{ B_{13/16}}$ and $t\in [-(13/16)^2,0]$.
  We argue by contradiction.  We assume that
		$\Phi$ has a positive
		maximum at some point $(\bar x, \bar y,\bar t)\in \bar B_{13/16}\times \bar B_{13/16}\times [-(13/16)^2,0]$ and we are going to get a contradiction for $L_2$, $L_1$
large enough.
The positivity of the maximum of $\Phi$  implies that $\bar x\neq \bar y$. Choosing $L_1\geq C\norm{w}_{L^\infty(Q_1)}$,
  we have that 
		 $\bar x$ and $\bar y$ are in $B_{13/16}$ and $\bar t\in (-(13/16)^2,0)$. 
We proceed as in the proof of Lemma \ref{holdnormsol}.
		By  Jensen-Ishii's lemma,  there exist
		\[
		\begin{split}
		&(\sigma+L_1(\bar t-t_0), a_1,X+L_1I)\in \overline{\mathcal{P}}^{2,+} w(\bar x, \bar t),\\ &(\sigma, a_2,Y-L_1I)\in \overline{\mathcal{P}}^{2,-} w(\bar y, \bar t).
		\end{split}
		\]
		
		Assuming that  $L_2\geq CL_1$, we have
		\begin{equation}\label{portorico}
		\begin{cases}
		2L_2\beta \abs{\bar x-\bar y}^{\beta-1}&\geq\abs{a_1}\geq L_2\beta|\bar x-\bar y|^{\beta-1} - L_1\abs{\bar x-x_0}\ge \frac{L_2}{2} \beta \abs{\bar x-\bar y}^{\beta-1}\\
		2L_2\beta\abs{\bar x-\bar y}^{\beta-1}&\geq\abs{a_2}\geq L_2\beta|\bar x-\bar y|^{\beta-1} - L_1\abs{\bar y-y_0}\ge \frac{L_2}{2} \beta \abs{\bar x-\bar y}^{\beta-1}.
		\end{cases}
		\end{equation}
	We can take $X, Y\in \mathcal{S}^n$ such that  it holds\\
		\begin{equation}\label{maineq1}
		-\frac{2}{\tau} \begin{pmatrix}
		I&0\\
		0&I 
		\end{pmatrix}\leq
		\begin{pmatrix}
		X&0\\
		0&-Y 
		\end{pmatrix}\leq \begin{pmatrix} Z^\tau& -Z^\tau\\
		-Z^\tau& Z^\tau\end{pmatrix},
		\end{equation}
	
		where 
	\begin{align*}
		Z^\tau=(I-\tau Z)^{-1} Z=2L_2\beta\abs{\bar x-\bar y}^{\beta-2}\left(I-2\frac{2-\beta}{3-\beta} \frac{\bar x-\bar y}{\abs{\bar x-\bar y}}\otimes \frac{\bar x-\bar y}{\abs{\bar x-\bar y}}\right).
\end{align*}	
 We have  for $\xi=\frac{\bar x-\bar y}{\abs{\bar x-\bar y}}$, 
	
				\begin{equation*}
		\langle Z^\tau \xi,\xi\rangle= 2L_2\beta\abs{\bar x-\bar y}^{\beta-2}\left(\frac{\beta-1}{3-\beta}\right)<0.
		\end{equation*}
Applying the inequality \eqref{maineq1} to any  vector $(\xi,\xi)$ with $\abs{\xi}=1$, we get that $X- Y\leq 0$ and 
		\begin{equation*}
		\norm{X},\norm{Y}\leq 4L_2\beta\abs{\bar x-\bar y}^{\beta-2}.
		\end{equation*}
		Moreover, using the positivity of the maximum of $\Phi$ and the Lipschitz regularity of $w$ (see \eqref{ramisto}), we have for $0<\beta\leq \frac{1}{4C}$,
		\begin{equation}\label{pogba}2\beta L_2 \abs{\bar x-\bar y}^{\beta-1}\leq 2\beta \frac{|w(\bar x, \bar t)-w(\bar y, \bar t)|}{|\bar x-\bar y|}\leq 2\beta C(p,n, \gamma)|q|\leq\frac{ |q|}{2}.
		\end{equation}
	Setting $\eta_1=a_1+q$, $\eta_2=a_2+q$, we get by using \eqref{portorico} and \eqref{pogba}, that   
		\begin{align}\label{koivu1}
		2|q|\geq\abs{\eta_1}&\geq \abs{q}-|a_1|\geq \frac{\abs{q}}{2}\geq 2L_2 \beta \abs{\bar x-\bar y}^{\beta-1},\nonumber\\
			2|q|\geq\abs{\eta_2}&\geq \abs{q}-|a_2|\geq \frac{\abs{q}}{2}\geq 2L_2 \beta \abs{\bar x-\bar y}^{\beta-1}.
		\end{align}
Writing the viscosity inequalities and adding them, we  get 

		\begin{align}\label{gregory1}
		0&\leq 2|\eta_1|^{-\gamma}(L_1+||\bar f||_{L^\infty(Q_1)} )+\underbrace{\tr (\A(\eta_1)(X-Y))}_{(i_1)}	+\underbrace{\tr ((\A(\eta_1)-\A(\eta_2))Y)}_{(i_2)\nonumber}\\
		&+\underbrace{ |\eta_1|^{-\gamma}(|\eta_1|^\gamma-|\eta_2|^\gamma)\tr(\A(\eta_2)Y)}_{(i_3)}
		+\underbrace{L_1\big[\tr (\A(\eta_1))+|\eta_2|^\gamma|\eta_1|^{-\gamma}\tr (\A(\eta_2))}_{(i_4)} \big].
		\end{align}
We estimate ($i_1$) as in the proof of Lemma \ref{holdnormsol}
		\begin{align*} \tr(\A(\eta_1) (X-Y))&\leq\min(1, p-1)8L_2\beta\abs{\bar x-\bar y}^{\beta-2}\left(\frac{\beta-1}{3-\beta}\right).
		\end{align*}
In order to estimate  ($i_2$),  we use  that $|\eta_1-\eta_2|\leq 4 L_1$ and  the estimate \eqref{koivu1},  so that 
		\begin{equation*}
		\abs{\hat \eta_1-\hat \eta_2}=
		\abs{\frac{\eta_1}{\abs {\eta_1}}-\frac{\eta_2}{\abs {\eta_2}}}
\le \max\left( \frac{\abs{\eta_2- \eta_1}}{\abs{\eta_2}},\frac{ \abs{\eta_2- \eta_1}}{\abs{\eta_1}}\right)\le \frac {16  L_1}{ \beta L_2 \abs{\bar x-\bar y}^{\beta-1}}.
		\end{equation*}
	Recalling that  $
		\norm{Y}=\underset{\hat \xi}{\max}\, |\langle Y\hat \xi, \hat \xi\rangle|
		\le  4L_2\beta\abs{\bar x-\bar y}^{\beta-2}$, it follows that 
	\begin{align*}
	 \tr( (\A(\eta_1)-\A(\eta_2)) Y)|\leq Cn\abs{p-2}L_1 \abs{\bar x-\bar y}^{-1}.
	 \end{align*}
	 Now we estimate the term ($i_3$). Notice that $|\eta_2|/|\eta_1|\leq 16$, $|\eta_1|\geq 1$ and $|\eta_1-\eta_2|\leq 4L_1$.  Using  the mean value theorem and the estimate \eqref{koivu1},  we get that  
	\begin{align*}
	 ||\eta_1|^\gamma-|\eta_2|^\gamma|&\leq C\gamma \dfrac{|\eta_1-\eta_2|}{|\eta_1|}|\eta_1|^\gamma\leq  \gamma CL_1|\eta_1|^{\gamma-1} &\text{if}&\quad \gamma\geq 1\\
	 &\leq |\eta_1-\eta_2|^\gamma \leq (4L_1)^\gamma  &\text{if}&\quad 0\leq \gamma\leq 1.
	\end{align*}
	\noindent It follows that 
	\begin{align}\label{nousa}
	 \dfrac{||\eta_1|^\gamma -|\eta_2|^\gamma||\tr(\A(\eta_2)Y)|}{  |\eta_1|^{\gamma}} \leq& n|\eta_1|^{-\gamma}\left\|Y\right\| \,\left\|\A(\eta_2)\right\|\,||\eta_1|^\gamma-|\eta_2|^\gamma|&&\nonumber\\
	 \leq& C\abs{\bar x-\bar y}^{-1}(1+|p-2|)L_1   &\text{if}&\quad \gamma\geq 1\nonumber\\
	 \leq& C L_2^{1-\gamma}\beta^{1-\gamma}\abs{\bar x-\bar y}^{\beta-2+\gamma(1-\beta)}(1+|p-2|) L_1^\gamma &\text{if}&\quad\gamma\in [0,1].
	 \end{align}
	We estimate ($i_4$) by
		$$ L_1(\tr(\A(\eta_1))+|\eta_2|^\gamma|\eta_1|^{-\gamma}\tr(\A(\eta_2)))\leq  CL_1n\max(1, p-1).$$
		Finally, gathering the previous estimates and plugging them into \eqref{gregory1} and recalling that $|\eta_1|\geq 1$, we get 
		\begin{align*}
		0&\leq 4L_1 + 2 \norm{\bar f}_{L^\infty(Q_1)}+CL_1n\max(1, p-1) +C\min(1, p-1) L_2\beta\abs{\bar x-\bar y}^{\beta-2}\left(\frac{\beta-1}{3-\beta}\right)\\
		&\quad+C n\abs{p-2}L_1 \abs{\bar x-\bar y}^{-1}+ \text{\bf right hand term of}\quad \eqref{nousa}.
		\end{align*}
		 Choosing $L_2$ large enough  \begin{align}L_2\geq C(1+L_1+ \norm{\bar f}_{L^\infty(Q_1)} )\geq C(1+\norm{w}_{L^\infty(Q_1)}+\norm{\bar f}_{L^\infty(Q_1)} ),\label{dependf}
		 \end{align}  we end up with
		$$ 0\leq \dfrac{\min(1, p-1)\beta(\beta-1)}{1000(3-\beta)} L_2\abs{\bar x-\bar y}^{\beta-2}<0,   $$
		which is  a contradiction.  
		 This concludes the proof  since for $(x_0,t_0), (y_0,t_0)\in Q_{13/16}$, we have $\Phi(x_0,y_0, t_0)\leq 0$ and  we get
		\[
		|u(x_0, t_0)-u(y_0, t_0)|\leq L_2|x_0-y_0|^\beta.
		\]
		Remembering the dependence of $L_2$ (see \eqref{dependf}), we get the desired result.
\end{proof}

	\subsection{Local uniform Lipschitz estimates}

	\begin{lemma}\label{liphomol}
		Let $w$ be a bounded viscosity solution to equation \eqref{deviaapen} with  $|q|>C\Gamma_0$. For all $r\in \left(0,\frac34\right)$, and  for all  $x,y\in \overline{B_{r}}$ and $t\in [-r^2,0]$, it holds
		\begin{equation}
		\begin{split}
		\abs{w(x,t)-w(y,t)}\leq \tilde C \left(1+\norm{ w}_{L^\infty(Q_1)}+\norm{\bar f}_{L^\infty(Q_1)}\right) \abs{x-y}, 
		\end{split}
		\end{equation}
		where  $\tilde{C}=\tilde{C}(p,n, \gamma)>0$.
		
	\end{lemma}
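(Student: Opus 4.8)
The plan is to reproduce the Ishii--Lions doubling-of-variables argument of Lemmas \ref{holdnorm} and \ref{liphomeq}, with the concave penalization $\vp(s)=s-\kappa_0 s^{\nu}$ responsible for the Lipschitz (rather than merely H\"older) gain, but keeping explicit track of the slope $q$ so as to see that, once $|q|$ is large, its presence only helps. It suffices to prove the estimate for $r=\tfrac34$ (smaller $r$ follow by restriction); the threshold $C=C(p,n,\gamma)$ in the hypothesis $|q|>C\Gamma_0$ is to be fixed in the course of the proof, and for $|q|\le C\Gamma_0$ the bound is already \eqref{girou}. So fix $x_0,y_0\in B_{3/4}$, $t_0\in(-(3/4)^2,0)$, let $\beta=\beta(p,n,\gamma)\in(0,1)$ be the uniform H\"older exponent of Lemma \ref{holdnorm}, set $\nu:=1+\beta/2$, take $\vp$ exactly as in Lemma \ref{liphomeq}, and consider
\begin{align*}
\Phi(x,y,t):&=w(x,t)-w(y,t)-L_2\vp(|x-y|)\\
&\quad-\tfrac{L_1}{2}|x-x_0|^2-\tfrac{L_1}{2}|y-y_0|^2-\tfrac{L_1}{2}(t-t_0)^2 .
\end{align*}

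\textbf{Setup of the contradiction.} Assume $\Phi$ has a positive maximum at $(\bar x,\bar y,\bar t)$; then $\bar x\neq\bar y$, and choosing $L_1\ge C(1+\norm{w}_{L^\infty(Q_1)}+\norm{\bar f}_{L^\infty(Q_1)})$ the maximum is interior. From Lemma \ref{holdnorm} (after adjusting $L_1$) one gets $L_1|\bar x-x_0|,\,L_1|\bar y-y_0|\le C_H|\bar x-\bar y|^{\beta/2}$ with $C_H=C(1+\norm{w}_{L^\infty(Q_1)}+\norm{\bar f}_{L^\infty(Q_1)})$, and Jensen--Ishii furnishes $(a_1,X),(a_2,Y)$ with the matrix inequality \eqref{matineq22}, the bounds $\tfrac{L_2}{2}\le|a_i|\le 2L_2$ (using $\vp'\in[\tfrac34,1]$ and $L_2\ge CC_H$), $X-Y\le0$, $\norm{X},\norm{Y}\le CL_2\big(\tfrac{\vp'(|\bar x-\bar y|)}{|\bar x-\bar y|}+|\vp''(|\bar x-\bar y|)|\big)$, and a negative eigenvalue of $X-Y$ bounded above by $2L_2\vp''(|\bar x-\bar y|)$. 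The key step is to phrase the viscosity inequalities for $w$ in terms of the true gradients $\eta_i:=a_i+q$ of $h:=w+q\cdot x$ (which solves \eqref{maineq}): since $|a_i|\le 2L_2$ while $|q|>C\Gamma_0$, enlarging the threshold $C=C(p,n,\gamma)$ forces $\tfrac12|q|\le|\eta_i|\le 2|q|$, in particular $|\eta_i|\ge 1$.

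\textbf{Estimating the terms.} Subtracting the two inequalities and dividing by $|\eta_1|^{\gamma}$ gives, exactly as in \eqref{gregory1}, an inequality $0\le 2|\eta_1|^{-\gamma}(L_1+\norm{\bar f}_{L^\infty(Q_1)})+(i_1)+(i_2)+(i_3)+(i_4)$, with the four terms as there but with the $\eta_i$ in place of the $a_i$. Here $(i_1)=\tr(\A(\eta_1)(X-Y))$ is coercive: since $X-Y\le0$ has an eigenvalue $\le 2L_2\vp''(|\bar x-\bar y|)$ and the eigenvalues of $\A(\eta_1)$ lie in $[\min(1,p-1),\max(1,p-1)]$, one gets $(i_1)\le -2\min(1,p-1)\nu(\nu-1)\kappa_0 L_2|\bar x-\bar y|^{\nu-2}$. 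For the rest, $|\eta_1-\eta_2|=|a_1-a_2|\le 2C_H|\bar x-\bar y|^{\beta/2}$, hence $|\hat\eta_1-\hat\eta_2|\le CC_H|\bar x-\bar y|^{\beta/2}/|q|$, and — crucially since $\min(|\eta_1|,|\eta_2|)\ge\tfrac12|q|\ge1$ is bounded below — the mean value theorem yields $|\eta_1|^{-\gamma}\big||\eta_1|^{\gamma}-|\eta_2|^{\gamma}\big|\le CC_H|\bar x-\bar y|^{\beta/2}/|q|$ \emph{uniformly for all $\gamma\ge0$}. Combining with $\norm{Y}\le CL_2(\vp'/|\bar x-\bar y|+|\vp''|)$ and the choice $\nu=1+\beta/2$, which makes both $\vp'(|\bar x-\bar y|)|\bar x-\bar y|^{\beta/2-1}$ and $|\vp''(|\bar x-\bar y|)|\,|\bar x-\bar y|^{\beta/2}$ comparable to $|\bar x-\bar y|^{\nu-2}$, the two middle terms $(i_2),(i_3)$ are each $\le C C_H|q|^{-1}L_2|\bar x-\bar y|^{\nu-2}$, while $(i_4)\le CL_1$ and the first term is $\le L_1+\norm{\bar f}_{L^\infty(Q_1)}$. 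To close, choose $L_2=\tilde C(1+\norm{w}_{L^\infty(Q_1)}+\norm{\bar f}_{L^\infty(Q_1)})$ large (absorbing $L_1$, $\norm{\bar f}_{L^\infty(Q_1)}$ and the $\kappa_0$-factors, using that $\nu-2<0$ and $|\bar x-\bar y|\le\tfrac32$ so $|\bar x-\bar y|^{\nu-2}$ is bounded below), and then enlarge $C$ so that $C_H/|q|$ is small relative to $\min(1,p-1)\nu(\nu-1)\kappa_0$; this produces $0\le -\tfrac{\min(1,p-1)\nu(\nu-1)\kappa_0}{1000}L_2|\bar x-\bar y|^{\nu-2}<0$, a contradiction. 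Hence $\Phi\le0$ on $\overline{B_{3/4}}\times\overline{B_{3/4}}\times[-(3/4)^2,0]$, and evaluating at $(x_0,y_0,t_0)$ gives $|w(x_0,t_0)-w(y_0,t_0)|\le L_2\vp(|x_0-y_0|)\le L_2|x_0-y_0|$, which is the claim.

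\textbf{Main obstacle.} The delicate point is the uniformity in $q$: one must verify that every term created by differencing the two equations — above all $|\eta_1|^{\gamma}-|\eta_2|^{\gamma}$ and $\A(\eta_1)-\A(\eta_2)$ — carries an extra factor $|q|^{-1}$ (coming from $|\eta_1-\eta_2|\lesssim C_H|\bar x-\bar y|^{\beta/2}$ together with $|\eta_i|\asymp|q|$), so that it is swallowed by the coercive term once $|q|$ is large, while simultaneously keeping the H\"older gain $|\bar x-\bar y|^{\beta/2}$ aligned with the exponent $\nu-2$ of $\vp''$ — which is exactly why $\nu=1+\beta/2$. This is precisely the step that breaks down for $\gamma<0$: there $|\eta_i|^{-\gamma}\to\infty$ as $|q|\to\infty$ and the lower bound $|\eta_i|\ge\tfrac12|q|$ is of no help, consistent with the fact that the lemma is only claimed for $\gamma\ge0$.
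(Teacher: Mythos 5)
Your proposal is correct and follows essentially the same Ishii--Lions doubling argument as the paper's proof: the same penalization $\vp(s)=s-\kappa_0 s^{\nu}$ with $\nu=1+\beta/2$, the same localization and use of the uniform H\"older estimate of Lemma \ref{holdnorm}, the same shift $\eta_i=a_i+q$ with $|\eta_i|\asymp|q|$ for $|q|$ above a threshold, and the same absorption of all error terms into the coercive $\vp''$ term. The only deviation is in the term $(|\eta_1|^{\gamma}-|\eta_2|^{\gamma})\tr(\A(\eta_2)Y)$, where you apply the mean value theorem uniformly for all $\gamma\ge 0$ (exploiting $|\eta_i|\ge |q|/2\ge 1$ to extract the factor $|q|^{-1}$ with the full gain $|\bar x-\bar y|^{\beta/2}$), instead of the paper's case split $\gamma\ge 1$ versus $0\le\gamma\le 1$; this is a harmless, if anything cleaner, variant of the same step.
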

	\begin{proof}
	We proceed as in the proof of Lemma \ref{liphomeq}. 
We fix $r=3/4$ and  we fix $x_0, y_0\in B_{r}$, $t_0\in (-r^2,0)$. For positive constants $L_1, L_2$, we  define  the  function 
		\begin{align*}
		\Phi(x, y,t):&=w(x,t)-w(y,t)-L_2\vp(\abs{x-y})-\frac {L_1}{2}\abs{x-x_0}^2-\frac {L_1}{2}\abs{y-y_0}^2-\frac{ L_1}{ 2} (t-t_0)^2,
		\end{align*}
		where 
		\[
		\begin{split}
		\vp(s)=
		\begin{cases}
		s-s^{\nu}\kappa_0& 0\le s\le s_1:=(\frac 1 {\nu\kappa_0})^{1/(\nu-1)}  \\
		\vp(s_1)& \text{otherwise},
		\end{cases}
		\end{split}
		\]
		where $2>\nu>1$ and $\kappa_0>0$ is taken so that  $s_1>2 $ and $\nu \kappa_0s_1^{\nu-1}\leq 1/4$.
		We want to show that $\Phi(x, y,t)\leq 0$ for $(x,y)\in \overline{B_r}\times\overline{ B_r}$ and $t\in [-r^2,0]$.  We proceed by contradiction assuming that
		$\Phi$ has a positive
		maximum at some point $(\bar x, \bar y,\bar t)\in \bar B_r\times \bar B_r\times [-r^2,0]$ and  we are going to get a contradiction for $L_2$, $L_1$
large enough and for a suitable choice of $\nu$.
	As in the proof of the Hölder estimate, 	we notice  that $\bar x\neq \bar y$, and  for $L_1\geq C\norm{w}_{L^\infty(Q_1)}$, we have 
that $\bar x$ and $\bar y$ are in $B_r$ and $\bar t\in (-r^2,0)$. Moreover,  from Lemma \ref{holdnorm},  we know that  $w$ is locally H\"older continuous, and that  there exist a constant $C_{H}>0$ 
$$\bar C_{H}:= \bar C\times \left(1+\norm{ u}_{L^\infty(Q_1)}+\norm{\bar f}_{L^\infty(Q_1)}\right)$$
and a constant $\beta=\beta(p,n, \gamma)\in (0,1)$
 such that 
		$$|w(x,t)-w(y,t )|\leq \bar C_{H}|x-y|^\beta \quad\text{for}\, x, y\in B_r, t\in (-r^2,0).$$	
		Using this estimate, we have that  
		\begin{equation*}
	L_1\abs{\bar y-y_0}, 	\,L_1\abs{\bar x-x_0}\leq \bar C_{H}\abs{\bar x-\bar y}^{\beta/2}.
		\end{equation*} 	
From the  Jensen-Ishii's lemma, we have  the existence of 	

		\[
		\begin{split}
		&(\sigma+L_1(\bar t-t_0),a_1,X+L_1I)\in \ol P^{2,+}u(\bar x,\bar t),\\ &(\sigma,a_2,Y-L_1I)\in \ol P^{2,-}u(\bar y,\bar t),
		\end{split}
		\]
	where
		\[
		\begin{split}
		a_1&=L_2\vp'(|\bar x-\bar y|) \frac{\bar x-\bar y}{\abs{\bar x-\bar y}}+L_1(\bar x-x_0),\\
		a_2&=L_2\vp'(|\bar x-\bar y|) \frac{\bar x-\bar y}{\abs{\bar x-\bar y}}-L_1(\bar y-y_0).
		\end{split}
		\]
	Recalling that $\vp'\geq \frac34$, then  if  $L_2$ is large ($L_2\geq 4\bar C_{H}$), we have
		\[
	2L_2\geq	\abs{a_1},\abs{a_2}\geq L_2\varphi'(|\bar x-\bar y|) -\bar C_{H}\abs{\bar x-\bar y}^{\beta/2}\ge \frac{L_2}{2}.
		\]
		Denoting  $\eta_1=a_1+q$, $\eta_2=a_2+q$, we have for $|q|\geq L_2+2$
		\begin{align}\label{koivu}
		3|q|\geq \abs{\eta_1}&\geq \abs{q}-\abs{a_1}\geq \frac{\abs{q}}{2}\geq \frac{L_2}{2},\nonumber\\
		3|q|\geq \abs{\eta_2}&\geq \abs{q}-\abs{a_2}\geq \frac{\abs{q}}{2}\geq \frac{L_2}{2}\\
		|\eta_1-\eta_2|&\leq 2\bar C_H |\bar x-\bar y|^{\beta/2}.\nonumber
		\end{align}
\noindent Moreover, by  Jensen-Ishii's lemma, for any $\tau>0$, we can take $X, Y\in \mathcal{S}^n$ such that 
		
		\begin{equation}\label{matineq2}
		- \big[\tau+2\norm{Z}\big] \begin{pmatrix}
		I&0\\
		0&I 
		\end{pmatrix}\leq	\begin{pmatrix}
		X&0\\
		0&-Y 
		\end{pmatrix}
		\le 
		\begin{pmatrix}
		Z&-Z\\
		-Z&Z 
		\end{pmatrix}
		+\frac2\tau \begin{pmatrix}
		Z^2&-Z^2\\
		-Z^2&Z^2 
		\end{pmatrix}.
		\end{equation}
		
	As in the proof of Lemma \ref{liphomeq}, we take  $\tau=4L_2\left(|\vp''(|\bar x-\bar y|)|+\dfrac{|\vp'(|\bar x-\bar y|)|}{|\bar x-\bar y|}\right)$ and we observe that for $\xi=\frac{\bar x-\bar y}{\abs{\bar x-\bar y}}$,
	
			\begin{align}\label{mercit}
		\langle Z\xi,\xi\rangle +\frac2\tau \langle Z^2\xi,\xi\rangle\leq \dfrac{L_2}{2}\vp''(|\bar x-\bar y|)<0 .
		\end{align}
		We also have that  $X- Y\leq 0$,  $\norm{X},\norm{Y}\leq 2\norm{Z}+\tau$, and that   at least one of the eigenvalue of $X-Y$  that we denote by  $\lambda_{i_0}$ is   negative and smaller than $2 L_2\vp''(|\bar x-\bar y|)$.\\
		Writing the  viscosity inequalities and adding them, we have 
		
			\begin{align}\label{gregory1f}
		0&\leq 2(L_1+||\bar f||_{L^\infty(Q_1)})|\eta_1|^{-\gamma} +\underbrace{\tr (\A(\eta_1)(X-Y))}_{(1)}+\underbrace{\tr ((\A(\eta_1)-\A(\eta_2))Y)}_{(2)}\nonumber\\
		&+\underbrace{|\eta_1|^{-\gamma}(|\eta_1|^\gamma-|\eta_2|^\gamma)\tr(\A(\eta_2)Y)}_{(3)}+\underbrace{L_1\big[\tr (\A(\eta_1))+|\eta_2|^\gamma|\eta_1|^{-\gamma}\tr (\A(\eta_2))}_{(4)} \big].
		\end{align}
		We estimate $(1)$ by
		\begin{align*}  
		\tr(\A(\eta_1) (X-Y))&\leq \sum_i \lambda_i(\A(\eta_1))\lambda_i(X-Y)
		\leq 2\min(1, p-1) L_2 \vp''(|\bar x-\bar y|).
		\end{align*}
		As previously, we estimate $(2)$ by
	\begin{align*}
| \tr( (\A(\eta_1)-\A(\eta_2)) Y)|&\leq C \abs{p-2}\bar C_{H} \vp'(|\bar x-\bar y|) \abs{\bar x-\bar y}^{-1+\beta/2}+C \abs{p-2}\bar C_{H} |\vp''(|\bar x-\bar y|)|.
	 \end{align*}
We have also
\begin{align}\label{nousa1}
|\eta_1|^{-\gamma}||\eta_1|^\gamma -&|\eta_2|^\gamma|| \tr(\A(\eta_2)Y)|\leq n\left\|Y\right\| \,\left\|\A(\eta_2)\right\|\, ||\eta_1|^\gamma-|\eta_2|^\gamma|\nonumber\\
&\begin{array}{lll}
	 &\leq  C\left( \abs{\bar x-\bar y}^{-1+\beta/2}+ |\vp''(|\bar x-\bar y|)|\right)(1+|p-2|)\bar C_{H}  &\text{if}\quad \gamma\geq 1\\
	 &\leq  CL_2^{1-\gamma}\left( \frac{\vp'(|\bar x-\bar y|)}{\abs{\bar x-\bar y}}+ |\vp''(|\bar x-\bar y|)|\right)(1+|p-2|)
	 \bar C_{H}^\gamma|\bar x-\bar y|^{\frac{\gamma\beta}{2}} &\text{if}\quad 0<\gamma\leq 1.
	 \end{array}
	 \end{align}
		 Finally, we have
		$$ |\eta_1|^{-\gamma}L_1[|\eta_1|^\gamma\tr(\A(\eta_1))+|\eta_2|^\gamma\tr(\A(\eta_2))]\leq 2CL_1n\max(1, p-1).$$
		Gathering the previous estimates with \eqref{gregory1f}, recalling the definition of $\vp$ and using that $|\eta_1|\geq |q|/2\geq 1$,  we get 
	\begin{align*}
		0&\leq C(L_1+\norm{\bar f}_{L^\infty(Q_1)})+C L_1n\max(1, p-1)+C\abs{p-2}\bar C_{H} \abs{\bar x-\bar y}^{-1+\beta/2}\\
		&\quad+C \abs{p-2}\bar C_{H} |\bar x-\bar y|^{\nu-2} - 2 \min(1, p-1) (\nu-1)\nu \kappa_0L_2|\bar x-\bar y|^{\nu-2}\\
		&\quad+\text{\bf right hand term of}\quad \eqref{nousa1}.
		\end{align*}
		Taking 
		$\nu=1+\frac\beta 2$, recalling the dependence of $C_H$ and choosing $L_2$ large 
$$L_2\geq  C \left(1+\norm{ u}_{L^\infty(Q_1)}+\norm{\bar f}_{L^\infty(Q_1)}\right),	$$ 
	  we get that
	  
		$$ 0\leq \dfrac{-\min(1, p-1)\nu(\nu-1)\kappa_0}{1000} L_2\abs{\bar x-\bar y}^{\nu-2}<0,   $$
		which is  a contradiction. Hence
		\[
		|u(x_0,t_0)-u(y_0,t_0)|\leq L_2\vp(|x_0-y_0|)\leq L_2|x_0-y_0|.\qedhere
		\]
		
	\end{proof}

\def\cprime{$'$} \def\polhk#1{\setbox0=\hbox{#1}{\ooalign{\hidewidth
  \lower1.5ex\hbox{`}\hidewidth\crcr\unhbox0}}}


\begin{thebibliography}{10}
\bibitem{Ar15}
{\sc J. Aramaki},
\newblock Hölder continuity with exponent $(1+\alpha)/2$ in the time variable for solutions of parabolic equations.
\newblock{\em Electron. J. Differential Equations. } 96: 1--6, 2015.
\bibitem{arg}
{\sc E. Argiolas, F.  Charro, I. Peral},
On the Aleksandrov-Bakel'man-Pucci estimate for some elliptic and parabolic nonlinear operators. {\em 
Arch. Ration. Mech. Anal.} 202 (3):  875--917, 2011. 
\bibitem{AP17}
{\sc A. Attouchi, M. Parviainen},
\newblock Hölder regularity for the gradient of the inhomogeneous parabolic normalized $p$-Laplacian.
{\em  Commun. Contemp. Math.}  20 no. 4, 1750035, 27 pp, 2018.

\bibitem{ARP}
{\sc A. Attouchi, E. Ruosteenoja},
Remarks on regularity for $p$-Laplacian type equations in non-divergence form.  {\em J. Differential Equations.} 265  (5): 1922–1961, 2018.
\bibitem{barlesbile}
{\sc G. Barles, S. Biton,  O. Ley}, A geometrical approach to the study of unbounded solutions of quasilinear parabolic
equations. {\em Arch. Ration. Mech. Anal.} 162 (4): 287--325, 2002.
\bibitem{barles}
{\sc G. Barles},
\newblock
Local Gradient Estimates for Second-Order Nonlinear Elliptic and Parabolic Equations by the Weak Bernstein's Method.  Preprint, \url{https://arxiv.org/abs/1705.08673}, 2017.







\bibitem{bata}
{\sc T. Bhattacharya, L. Marazzi},  On the viscosity solutions to a class of nonlinear degenerate parabolic differential equations. {\em Rev. Mat. Complut.} 30 (3): 621--656, 2017. 
\bibitem{BD}
{\sc I.  Birindelli,  F. Demengel}, $ C^{1,\beta}$ regularity for Dirichlet problems associated to fully nonlinear degenerate elliptic equations. {\em ESAIM Control Optim. Calc. Var.} 20 (4): 1009--1024, 2014.
\bibitem{bourg}
{\sc M. G. Bourgoing},
$C^{1, \beta}$ regularity of viscosity solutions via a continuous-dependence result. {\em  Adv.
Differential Equations.} 9  no. 3-4, 447--480, 2004.
\bibitem{chen}
{\sc Y. G. Chen, Y. Giga, S. Goto},
\newblock Uniqueness and existence of viscosity solutions
of generalized mean curvature flow equations.
\newblock{\em J. Differential Geom.} 33 (3): 749--786, 1991.


\bibitem{crandnote}
{\sc M. G. Crandall}
\newblock
Viscosity solutions: a primer. Viscosity solutions and applications (Montecatini Terme, 1995).
\newblock{\em 
Lecture Notes in Math. 1660, Springer, Berlin}, 1--43, 1997. 
\bibitem{crandall1992user}
{\sc M.G. Crandall, H.~Ishii,  P-L Lions},
\newblock User's guide to viscosity solutions of second order partial
  differential equations.
\newblock {\em Bull. Am. Math. Soc.}  27 (1):1--67, 1992.



\bibitem{dem11}
{\sc  F. Demengel},
\newblock
Existence's results for parabolic problems related to fully nonlinear operators degenerate or singular.
\newblock{\em Potential Anal.} 35 (1): 1--38, 2011.


\bibitem{diben}
{\sc  E. DiBenedetto}, Degenerate parabolic equations. {\em Springer, New York}, 1993.
\bibitem{dibfrie}
{\sc E. DiBenedetto,  A. Friedman},  Hölder estimates for nonlinear degenerate parabolic systems. {\em J. Reine Angew. Math.}  357,  1--22, 1985.
\bibitem{evas}
{\sc C. Evans,  J.  Spruck,} Motion of level sets by mean curvature I. \newblock{\em J. Differ. Geom. } 33 (3): 635--681, 1991.
\bibitem{giga}
 {\sc Y.  Giga,  S.  Goto,  H.  Ishii,  M.  H.  Sato},
Comparison  principle  and  convexity
preserving  properties  for  singular  degenerate  parabolic  equations  on  unbounded
domains.
{\em Indiana Univ. Math. J.} 40 (2): 443--470, 1991.
\bibitem{gild}
{\sc B. H. Gilding}, 
Hölder continuity of solutions of parabolic equations.
{\em J. London Math. Soc.} (2) 13 (1): 103--106, 1976. 
\bibitem{IJS17}
{\sc C. Imbert, T. Jin,  L. Silvestre },
Hölder gradient estimates for a class of singular or degenerate parabolic equations.
{\em  to appear in Advances in Nonlinear Analysis}, \url{https://doi.org/10.1515/anona-2016-0197}, 2017.
\bibitem{IS}
{\sc C. Imbert, l. Silvestre}
$ C^{1,\alpha}$ regularity of solutions of some degenerate fully non-linear elliptic equations. {\em Adv. Math.} 233 (1): 196--206, 2013.
\bibitem{ishiilions}
{\sc H. Ishii, P-L Lions}, 
\newblock Viscosity solutions of fully nonlinear second-order elliptic partial
differential equations.
\newblock {\em J. Differential equations}, 83 (1):26--78, 1990.

\bibitem{jinsl15}
{\sc T.~Jin,  L.~Silvestre},
\newblock H\"older gradient estimates for parabolic homogeneous $p$-{L}aplacian
equations.
\newblock {\em J. Math. Pures. Appl.} (9) 108  no. 1, 63--87, 2017.
\bibitem{julm}
{\sc P. Juutinen, P. Lindqvist, J.J.  Manfredi},     
 \newblock  On the equivalence of viscosity solutions and weak solutions for a quasi-linear equation. {\em SIAM J. Math. Anal.} 33 (3):699--717, 2011. 


\bibitem{ks79}
{\sc N.V. Krylov, M.V. Safonov},
 An estimate for the probability of a diffusion process hitting a set of positive measure. {\em Doklady Akademii Nauk SSSR.} 245 (1): 18--20, 1979.
 \bibitem{ks80}
 N.V. Krylov, M. V. Safonov, A property of the solutions of parabolic equations with measurable coefficients.
 {\em Izvestiya Akademii Nauk SSSR. Seriya Matematicheskaya.} 44 (1): 161--175, 1980.



\bibitem{Kr69}
{\sc S.N: Kruzkov},
\newblock Results on the nature of the continuity of solutions of parabolic equations, and certain applications thereof. (Russian)\newblock{\em  Mat. Zametki.} 6: 97--108, 1969.
\bibitem{kup}
{\sc S.N. Kruzhkov, O.A. Oleinik}, Quasi-linear parabolic equations of second order in several independent variables.
{\em Russian Math. Surveys}, 16  (2): 105--146, 1961.
\bibitem{kuusiM12}
{\sc T. Kuusi,  G. Mingione},
\newblock New perturbation methods for nonlinear parabolic problems. 
{\em J. Math. Pures Appl}. (9) 98 no.4: 390--427, 2012.
\bibitem{LU68}
{\sc O.A. Ladyzhenskaya, V. A. Solonnikov,  N.N. Uraltseva}
\newblock  Linear and quasilinear  equations of parabolic type.
\newblock{\em Translated from the Russian by S. Smith. Translations of Mathematical Monographs, Vol. 23 American Mathematical Society, Providence, R.I. 1968 xi+648 pp.} 1968.



\bibitem{Lieb96}
{\sc G. M. Lieberman},
\newblock
Second order parabolic differential equations.
\newblock {\em World Scientific Publishing Co., Inc., River Edge, NJ, xii+439 pp}, 1996.






\bibitem{manfredini}
{\sc M. Manfredini, A. Pascucci,} A priori estimates for quasilinear degenerate parabolic equations.
{\em  Proc. Amer. Math. Soc.} 131 (4): 1115--1120, 2003.
\bibitem{OS}
{\sc M. Ohnuma, K.  Sato},
Singular degenerate parabolic equations with applications to the $p$-Laplace diffusion equation. 
{\em Comm. Partial Differential Equations.} (22) 3-4: 381--411, 1997. 
\bibitem{PV18}
{\sc M. Parviainen, J.L. Vázquez}
Equivalence between radial solutions of different parabolic gradient-diffusion equations and applications.  Preprint, \url{https://arxiv.org/abs/1801.00613}, 2018.




 \bibitem{urbano}
{\sc J. M Urbano},  The method of intrinsic scaling. A systematic approach to regularity for degenerate and singular PDEs. 
{\em Lecture Notes in Mathematics, 1930. Springer-Verlag, Berlin,} 2008.
\bibitem{wang1}
{\sc L.Wang}, 
On the regularity theory of fully nonlinear parabolic equations. I. 
{\em Comm. Pure Appl. Math.} 45 (1): 27--76, 1992.
\bibitem{wang2}
{\sc L. Wang},
On the regularity theory of fully nonlinear parabolic equations. II. 
{\em Comm. Pure Appl. Math}. 45 (2): 141--78,  1992. 
\bibitem{W86}
{\sc M. Wiegner}. On $C^\alpha$-regularity of the gradient of solutions of degenerate parabolic systems.
{\em Ann. Mat. Pura Appl.} (4), 145: 385--405, 1986.


\end{thebibliography}
	\end{document}